\numberwithin{equation}{section} \allowdisplaybreaks
\begin{document}
\newtheorem{theorem}{Theorem}[section]
\newtheorem{defin}{Definition}[section] \newtheorem{prop}{Proposition}[section]
\newtheorem{corol}{Corollary}[section]
\newtheorem{lemma}{Lemma}[section]
\newtheorem{rem}{Remark}[section]
\newtheorem{example}{Example}[section]
\title{On the geometry of double field theory} \author{{\small
by}\vspace{2mm}\\Izu Vaisman} \date{}
\maketitle
{\def\thefootnote{*}\footnotetext[1]{{\it 2000 Mathematics Subject
Classification: 53C15, 53C80} . \newline\indent{\it Key words and phrases}: Para-K\"ahler Manifold. Double Field. Generalized Geometry.}}
\begin{center}
\begin{minipage}{12cm} A{\footnotesize BSTRACT. Double field theory was developed by theoretical physicists as a way to encompass $T$-duality.
In this paper, we express the basic notions of the theory in differential-geometric invariant terms, in the framework of para-K\"ahler manifolds. We define metric algebroids, which are vector bundles with a bracket of cross sections that has the same metric compatibility property as a Courant bracket. We show that a double field gives rise to two canonical connections, whose scalar curvatures can be integrated to obtain actions. Finally, in analogy with Dirac structures, we define and study para-Dirac structures on double manifolds.} \end{minipage} \end{center} \vspace*{5mm} 
\section{Introduction}
Double field theory is a way to express string theory that encompasses $T$-duality and it was intensively studied in the theoretical physics literature of the last
years (see \cite{{HHZ},{HK},{HZ},{JLP},{JLP1}} and the references therein). In particular, relations of the theory with Hitchin's generalized geometry \cite{H} have been noticed (e.g., \cite{HZ}).

The aim of this paper is to formulate some of the geometry of double field theory in differential-geometric, invariant terms. We explain that doubled space-time should be seen as a flat, bi-Lagrangian (equivalently, para-K\"ahler) manifold. On the tangent bundle of such a manifold, we define a {\it metric algebroid structure}, with a Courant-like bracket (the C-bracket of the physics literature, e.g., \cite{HZ}). The corresponding non-skew-symmetric product defines a generalized Lie derivative (gauge transformations, in the language of physics).

Then, we express the equivalence between the field and a generalized metric, which was previously introduced in the physics literature, e.g., \cite{HHZ}, in geometric terms. Furthermore, we obtain canonical, generalized-metric connections and the global expression of the corresponding covariant derivative. We also define a generalized curvature tensor and a corresponding scalar curvature that may be used in the construction of an action of the field. Here and also later on, the word ``generalized" alludes  to generalized geometry \cite{{G},{G2},{H}}, etc.
Finally, we discuss Dirac-like structures in the metric algebroid of a flat, para-K\"ahler manifold.
\section{Metric algebroids and brackets}
Double field theory adds to the coordinates $x^i$ $(i=1,...,m)$ of the space-time manifold an equal number
of new coordinates $\tilde{x}_i$, thus defining a manifold $ M^{2m}$
called the double of the original manifold. (In some versions of the theory the number of added coordinates is smaller, but, we will ignore that.)
Moreover, there is an implicit understanding
that $\partial/\partial x^i$ have a
``covariant behavior" while $\partial/\partial\tilde{x}_i$ have a ``contravariant behavior". The aim of this procedure is to obtain a manifold
whose structure group can be reduced to the group $O(m,m)$, which is required for $T$-duality. The coordinates $(x^i,\tilde{x}_j)$ will be called {\it distinguished local coordinates} and $M$ also has arbitrary local coordinates defined by a differentiable transformation $$y^u=y^u(x^i,\tilde{x}_j)\hspace{2mm}(u=1,...,2m).$$

The distinguished coordinate transformations will be of the form
\begin{equation}\label{transfcoord} x^i=x^i(x^{'j}),\;
\tilde{x}_i=\tilde{x}_i(\tilde{x}'_k),\end{equation}
where
\begin{equation}\label{condtransf}
\frac{\partial\tilde{x}_i}{\partial\tilde{x}'_j}
=\frac{\partial x^{'j}}{\partial x^i}.\end{equation}
Since we have different sets of variables in the two sides, the partial
derivatives of (\ref{transfcoord}) must be locally constant, hence, $M$
is a particular type of a locally affine manifold, with affine coordinate
transformations of the local form
\begin{equation}\label{locafin}
x^i=\alpha^i_jx'^j+\alpha^i_0,
\; \tilde{x}_i=\beta_i^k\tilde{x}'_k+\beta_i^0\end{equation}
where $\alpha^i_j,\alpha^i_0,\beta_i^k,\beta_i^0$ are constants and $\alpha^i_j\beta_i^k=\delta_j^k$, the Kronecker index (we use the Einstein summation convention).

Condition (\ref{condtransf}) implies that the coordinate transformations
(\ref{transfcoord}) preserve the closed non degenerate $2$-form\footnote{For wedge products, we will use Cartan's evaluation conventions, e.g., $(\alpha\wedge\beta)(X,Y)=\alpha(X)\beta(Y)-\alpha(Y)\beta(X)$.}
\begin{equation}\label{expromega} \omega=dx^i\wedge d\tilde{x}_i.\end{equation}
Thus,  $M$ is endowed with
a symplectic form $\omega$ and with two Lagrangian foliations $L,\tilde{L}$
defined by $\tilde{x}_i=const.$, $x^i=const.$, respectively\footnote{By $L,\tilde{L}$ we denote both the foliations and the tangent bundles of the leaves.}. In the geometric
literature, such manifolds are called bi-Lagrangian or para-K\"ahler manifolds
\cite{AMT}, which are {\it flat} if $\omega$ has the expression (\ref{expromega}).

The reason for the last name is the equivalence of the symplectic form with a metric defined as follows. The two Lagrangian foliations may be seen as integrating the $\pm1$-eigenbundles of the para-complex structure $F:TM\rightarrow TM$ defined by
$$ F\frac{\partial}{\partial x^i}
=\frac{\partial}{\partial x^i},\,  F\frac{\partial}{\partial \tilde{x}_i}
=-\frac{\partial}{\partial \tilde{x}_i},
$$
with the characteristic properties
$$F^2=Id,\;\omega(FX,FY)=-\omega(X,Y),\;\;X,Y\in TM.$$
Then, the required metric is
\begin{equation}\label{defgamma} \gamma(X,Y)=\omega(FX,Y)=
dx^i\otimes d\tilde{x}_i.\end{equation}
Conversely, we have
$$ \omega(X,Y)=\gamma(FX,Y).$$

The expression (\ref{defgamma}) shows that the metric $\gamma$ is
non-degenerate and neutral, $L,\tilde{L}$ are maximal $\gamma$-isotropic bundles and
$$
\gamma(\frac{\partial}{\partial x^i},\frac{\partial}{\partial \tilde{x}_j})=\delta_i^j.$$

Notice that the dual bundles are given by
$$L^*=ann\,\tilde{L},\,\tilde{L}^*=ann\,L,$$ where $ann$ denotes the annihilator of a vector space or bundle.
The metric $\gamma$ produces the isomorphisms $\varphi:\tilde{L}\rightarrow L^*$ where $\varphi(\tilde{X})=\flat_\gamma\tilde{X}$ ($\tilde{X}\in\tilde{L}$) and a corresponding isomorphism $ \tilde{\varphi}:L\oplus\tilde{L}\rightarrow TM$ given by
$$\begin{array}{l}
\tilde{\varphi}(X,\alpha)=X+\sharp_\gamma\alpha\;\;(X\in L,\alpha\in L^*),\vspace*{2mm}\\
\tilde{\varphi}^{-1}Z=\frac{1}{2}((Id+F)Z,\flat_\gamma(Id-F)Z)\;\; (Z\in TM).\end{array}$$ In particular, $\tilde{\varphi}(0,dx^i)=
\partial/\partial\tilde{x}_i$.

The isomorphism $\tilde{\varphi}$ allows us to transfer the metric $\gamma$ and the form $\omega$ from $TM$ to $L\oplus L^*$ and the results, which we keep denoting by $\gamma,\omega$, are
\begin{equation}\label{gammabyL} \begin{array}{c} \gamma((X,\alpha),(Y,\beta))=\beta(X)+\alpha(Y),\;
 \omega((X,\alpha),(Y,\beta))=\beta(X)-\alpha(Y)\vspace*{2mm}\\ (X,Y\in L,\alpha,\beta\in L^*=ann\,\tilde{L}).\end{array}\end{equation}
$\gamma$ is the metric used in generalized geometry\footnote{In many papers on generalized geometry the metric and form are (\ref{gammabyL}) multiplied by $(1/2)$ but some papers use exactly (\ref{gammabyL}), e.g., \cite{G2}.}.\vspace*{2mm}\\
{\bf Notation Convention.} Based on the above, in the remainder of the paper we identify $L\oplus\tilde{L}$ with $TM$ via $\tilde{\varphi}$ (which is an isometry for the metric $\gamma$) and use the following notation convention: the symbols $L\oplus\tilde{L}$ and $TM$ will be used interchangeably and seen as synonymous; symbols like $(X,\alpha)$ ($X\in L,\alpha\in L^*=ann\,\tilde{L}$) and $X+\sharp_\gamma\alpha$ will be used interchangeably, seen as synonymous and also denoted by a boldface character, e.g., $\mathbf{X}$.\\

We assume that the reader knows the basic notions of generalized geometry such as Lie and Courant algebroids, and the corresponding calculus \cite{{LWX},{Mk}}, which serve as background to our study. Double field theory requires related, but different, notions \cite{{HK},{HZ}}. The need for such notions comes from the fact that the Lie bracket of vector fields is not adequate for $T$-duality.
\begin{defin}\label{defalgL} {\rm Let $E\rightarrow M$ be a vector bundle  endowed with a symmetric, non degenerate, inner product\footnote{$\Gamma$ denotes the space of cross sections of a bundle and $\odot$ denotes the symmetric tensor product.}
$g\in\Gamma\odot^2E^*$ (a {\it metric}) and a morphism $\rho:E\rightarrow TM$ (an {\it anchor}); then, we also have the morphism $\partial=(1/2)\sharp_g\hspace*{1pt}^t\hspace*{-1pt}\rho:T^*M\rightarrow E$, where $t$ denotes transposition. Assume that there exists an $\mathds{R}$-bilinear product $\bigstar:\Gamma E\times\Gamma E\rightarrow\Gamma E$ that satisfies the following properties:
$$\begin{array}{l}
1)\;(\rho e)(g(e_1,e_2))=g(e\bigstar e_1,e_2)+g(e_1,e\bigstar e_2),\vspace*{2mm}\\
2)\;e\bigstar e=\partial(g(e,e))\hspace{2mm}(\partial f=\partial(df),\,\forall f\in C^\infty(M)).\end{array}$$
Then, the quadruple $(E,g,\rho,\bigstar)$ is called a {\it metric algebroid}. Axiom 1) is the $g$-{\it compatibility} axiom and axiom 2) is the {\it normalization} axiom.}\end {defin}

We notice the following equivalent form of the definition of $\partial$
\begin{equation}\label{defpartial2} g(\partial f,e)=\frac{1}{2}(\rho e)f.
\end{equation} On the other hand we can prove
\begin{prop}\label{nonflin} The product of a metric algebroid satisfies the properties
$$\begin{array}{l}
a)\;e_1\bigstar(fe_2)=f(e_1\bigstar e_2)+((\rho e_1)f)e_2,\vspace*{2mm}\\
b)\;(fe_1)\bigstar e_2=f(e_1\bigstar e_2)-((\rho e_2)f)e_1 +2g(e_1,e_2)\partial f.\end{array}$$
\end{prop}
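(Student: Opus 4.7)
The plan is to prove (a) by directly unpacking the $g$-compatibility axiom 1) and exploiting the non-degeneracy of $g$, and then to derive (b) from (a) together with a polarization of the normalization axiom 2).

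\emph{Part (a).} For an arbitrary test section $e_3$, I would compute $g(e_1\bigstar(fe_2),e_3)$ in two ways. On one hand, axiom 1) applied to the pair $(fe_2,e_3)$ gives
$$g(e_1\bigstar(fe_2),e_3)=(\rho e_1)(g(fe_2,e_3))-g(fe_2,e_1\bigstar e_3).$$
Expanding the derivation $(\rho e_1)(f\,g(e_2,e_3))$ by Leibniz and pulling the factor $f$ out of $g(fe_2,e_1\bigstar e_3)$, the second term cancels with $f(\rho e_1)(g(e_2,e_3))$ after re-applying axiom 1) to $(e_2,e_3)$. What survives is
$$g(e_1\bigstar(fe_2),e_3)=((\rho e_1)f)\,g(e_2,e_3)+f\,g(e_1\bigstar e_2,e_3).$$
Since $e_3$ is arbitrary and $g$ is non-degenerate, (a) follows.

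\emph{Polarization of axiom 2).} Apply axiom 2) to $e_1+e_2$; expanding both sides and subtracting the identities $e_i\bigstar e_i=\partial(g(e_i,e_i))$, $i=1,2$, one obtains the symmetric-part identity
$$e_1\bigstar e_2+e_2\bigstar e_1=2\partial(g(e_1,e_2)).$$
Here I use that the composite $\partial=(1/2)\sharp_g{}^t\hspace*{-1pt}\rho$ is $\mathds{R}$-linear on $T^*M$; the only subtlety to flag is that for a product of functions the Leibniz rule $\partial(fh)=f\,\partial h+h\,\partial f$ holds, because $\partial:T^*M\to E$ is $C^\infty(M)$-linear and $d(fh)=f\,dh+h\,df$.

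\emph{Part (b).} Using the polarization identity, rewrite
$$(fe_1)\bigstar e_2=2\partial(g(fe_1,e_2))-e_2\bigstar(fe_1).$$
Applying (a) to $e_2\bigstar(fe_1)$ and the Leibniz rule noted above to $\partial(f\,g(e_1,e_2))$, this becomes
$$(fe_1)\bigstar e_2=2f\,\partial(g(e_1,e_2))+2g(e_1,e_2)\,\partial f-f(e_2\bigstar e_1)-((\rho e_2)f)e_1.$$
Replacing $2f\,\partial(g(e_1,e_2))$ by $f(e_1\bigstar e_2)+f(e_2\bigstar e_1)$ via the polarization identity collapses the $f(e_2\bigstar e_1)$ terms and yields (b).

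The only real step where one can slip is the polarization: it is tempting to forget that $\partial$ is only $\mathds{R}$-linear on functions (and $C^\infty$-linear on $T^*M$), so the Leibniz behaviour of $\partial(fh)$ must be derived from the definition rather than assumed. Everything else is formal manipulation with axiom 1) and non-degeneracy of $g$.
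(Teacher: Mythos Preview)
Your proof is correct and follows essentially the same route as the paper: apply axiom~1) with $fe_2$ in the second slot and use non-degeneracy of $g$ to get (a), polarize axiom~2) to obtain $e_1\bigstar e_2+e_2\bigstar e_1=2\partial(g(e_1,e_2))$, and then combine the two to deduce (b). Your explicit remark on the Leibniz behavior of $\partial(fh)$ is a useful clarification that the paper leaves implicit, but otherwise the arguments coincide.
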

\begin{proof} The $g$-compatibility condition for the triple $(e_1,fe_2,e)$ gives $$(\rho e_1)(g(fe_2,e))=g(e_1\bigstar(fe_2),e)+g(fe_2,e_1\bigstar e)$$ and also $$(\rho e_1)(g(fe_2,e))=(\rho e_1)(f)g(e_2,e)+f(\rho e_1)(g(e_2,e)).$$ By equating the results, and since $g$ is non degenerate, we get property a).

Then, by polarization, the normalization axiom 2) gives
\begin{equation}\label{normal2} e_1\bigstar e_2+e_2\bigstar e_1=2\partial(g(e_1,e_2))\end{equation}
and property b) is a direct consequence of a) and (\ref{normal2}).\end{proof}
\begin{rem}\label{linax1} {\rm Formula (\ref{defpartial2}) and properties a), b) imply that axiom 1) is invariant under the multiplication of any of the arguments $e,e_1,e_2$ by a function $f\in C^\infty(M)$.}\end{rem}
\begin{prop}\label{algcucr} A triple $(E,g,\rho)$ as above is a metric algebroid iff there exists an $\mathds{R}$-bilinear, skew symmetric bracket
$[\,,\,]:\Gamma E\times\Gamma E\rightarrow\Gamma E$ that satisfies the axiom
\begin{equation}\label{ax1'}
(\rho e)(g(e_1,e_2))=g([e,e_1]+\partial(g(e,e_1)),e_2)+g(e_1,[e,e_2]
+\partial(g(e,e_2))).\end{equation}\end{prop}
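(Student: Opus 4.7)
The plan is to exhibit an explicit bijection between $\bigstar$-products satisfying axioms 1) and 2) of Definition \ref{defalgL} and skew-symmetric brackets satisfying (\ref{ax1'}). The correspondence is suggested by the polarized normalization identity (\ref{normal2}): if one writes $e_1\bigstar e_2$ as a sum of its skew and symmetric parts, the symmetric part must equal $\partial(g(e_1,e_2))$. This motivates setting
\begin{equation*}
[e_1,e_2]\;=\;e_1\bigstar e_2-\partial(g(e_1,e_2)),\qquad
e_1\bigstar e_2\;=\;[e_1,e_2]+\partial(g(e_1,e_2)).
\end{equation*}

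For the direct implication, starting from a metric algebroid $(E,g,\rho,\bigstar)$ I would define $[e_1,e_2]$ by the first formula above. Skew-symmetry is then immediate from (\ref{normal2}), since $e_2\bigstar e_1=2\partial(g(e_1,e_2))-e_1\bigstar e_2$ forces $[e_2,e_1]=-[e_1,e_2]$. Substituting $e\bigstar e_i=[e,e_i]+\partial(g(e,e_i))$ into axiom 1) produces exactly (\ref{ax1'}), so nothing else needs to be checked.

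For the converse, given a skew-symmetric $\mathds{R}$-bilinear bracket $[\,,\,]$ satisfying (\ref{ax1'}), I would define $e_1\bigstar e_2=[e_1,e_2]+\partial(g(e_1,e_2))$. Axiom 1) follows by reading (\ref{ax1'}) backwards. For the normalization axiom 2), skew-symmetry of $[\,,\,]$ gives $[e,e]=0$, hence $e\bigstar e=\partial(g(e,e))$ as required. It remains to verify that these two constructions are mutual inverses, which is a direct substitution.

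The only subtle point — really the only place where anything non-formal happens — is that both axiom 1) and (\ref{ax1'}) must be checked to be genuine conditions on sections (not merely on a pointwise basis), but Remark \ref{linax1} already ensures that axiom 1) is $C^\infty(M)$-stable in each argument, and the same argument applied to the reformulated identity shows (\ref{ax1'}) is stable as well, so the bijection is well-defined on the level of $\mathds{R}$-bilinear operations. Thus the proof is essentially a single algebraic manipulation, with no genuine obstacle.
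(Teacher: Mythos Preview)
Your proof is correct and follows essentially the same route as the paper: define the correspondence $[e_1,e_2]=e_1\bigstar e_2-\partial(g(e_1,e_2))$, observe that skew-symmetry of the bracket is equivalent to the normalization axiom 2) via (\ref{normal2}), and that axiom 1) is equivalent to (\ref{ax1'}) by direct substitution. Your final paragraph about $C^\infty(M)$-stability is unnecessary here---the proposition concerns $\mathds{R}$-bilinear operations on sections, and the substitution argument works without any linearity check---but it does no harm.
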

\begin{proof} The product and the bracket reciprocally define each other by the relation
\begin{equation}\label{cr*} [e_1,e_2]=e_1\bigstar e_2-\partial(g(e_1,e_2)),
\end{equation} which shows the equivalence of the skew symmetry of the bracket with the normalization axiom 2) of the product.
Furthermore, (\ref{cr*}) ensures the equivalence between the $g$-compatibility axiom 1) and (\ref{ax1'}).
\end{proof}

Notice that properties a), b) are equivalent with
\begin{equation}\label{lincroset} [e_1,fe_2]=f[e_1,e_2]+(\rho e_1)(f)e_2
-g(e_1,e_2)\partial f,\end{equation} which may also be deduced from (\ref{ax1'}), directly, as in the proof of a).
\begin{example}\label{exma} {\rm Any Courant algebroid (e.g., \cite{{LWX},{V5}}) is a metric algebroid.}\end{example}
\begin{prop}\label{existcr} {\rm\cite{V5}} Any metric vector bundle $(E,g)$ endowed with an anchor $\rho:E\rightarrow TM$ has infinitely many structures of a metric algebroid. Namely, any $g$-preserving connection on $E$ defines a metric bracket and leads to a bijective correspondence between the metric brackets on $(E,g,\rho)$ and the $3$-forms $B\in\Gamma\wedge^3E^*$.\end{prop}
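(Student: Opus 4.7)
The plan is to separate the statement into an existence part and a parametrization part. For existence, given any $g$-preserving connection $\nabla$ on $E$, I would define the product
\[
e_1 \bigstar_\nabla e_2 := \nabla_{\rho e_1} e_2 - \nabla_{\rho e_2} e_1 + T_\nabla(e_1, e_2),
\]
where $T_\nabla(e_1, e_2) \in \Gamma E$ is characterized via the non-degeneracy of $g$ by
\[
g(T_\nabla(e_1, e_2), e_3) := g(\nabla_{\rho e_3} e_1, e_2).
\]
The right-hand side is $C^\infty(M)$-linear in $e_3$ (since $\nabla$ is $C^\infty$-linear in its direction and $\rho$ is a bundle morphism), so $T_\nabla$ is well defined. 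The $g$-compatibility axiom 1) is then verified by expanding both sides: the ``back-reaction'' terms $-g(\nabla_{\rho e_i} e, e_j)$ coming from the skew part are exactly cancelled by $g(T_\nabla(e, e_i), e_j) = g(\nabla_{\rho e_j} e, e_i)$, leaving $g(\nabla_{\rho e} e_1, e_2) + g(\nabla_{\rho e} e_2, e_1) = (\rho e)g(e_1, e_2)$ by $g$-preservation. The normalization axiom 2) reduces to $T_\nabla(e, e) = \partial g(e, e)$, which follows because $g(T_\nabla(e, e), e_3) = g(\nabla_{\rho e_3} e, e) = \tfrac{1}{2}(\rho e_3)g(e, e) = g(\partial g(e,e), e_3)$ by (\ref{defpartial2}).

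For the bijection, I would argue tensorially. If $\bigstar_1, \bigstar_2$ are two metric products on $(E, g, \rho)$, their difference $D := \bigstar_1 - \bigstar_2$ is $C^\infty(M)$-bilinear: the non-tensorial terms in properties a) and b) of Proposition \ref{nonflin} depend only on the fixed data $g, \rho, \partial$ and thus cancel in the difference. Polarization of axiom 2) gives $D(e,e)=0$, hence $D$ is skew in its arguments; subtracting the two instances of axiom 1) yields $g(D(e,e_1), e_2) + g(e_1, D(e,e_2)) = 0$. Consequently $B(e, e_1, e_2) := g(D(e, e_1), e_2)$ is a tensor, skew in the first two slots and in the last two, hence totally skew, so $B \in \Gamma \wedge^3 E^*$. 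Conversely, given $B \in \Gamma \wedge^3 E^*$, define $D_B$ by $g(D_B(e_1, e_2), e_3) := B(e_1, e_2, e_3)$ and set $\bigstar' := \bigstar_\nabla + D_B$. The total skewness of $B$ ensures $D_B(e,e) = 0$ (so axiom 2 is preserved) and $g(D_B(e,e_1), e_2) + g(e_1, D_B(e,e_2)) = 0$ (so axiom 1 is preserved), hence $\bigstar'$ is again a metric product. The maps $\bigstar \mapsto B$ and $B \mapsto \bigstar_\nabla + D_B$ are mutually inverse by the non-degeneracy of $g$, giving the claimed bijection; infinitely many structures exist because $\Gamma \wedge^3 E^*$ is infinite-dimensional (and $g$-preserving connections always exist by symmetrizing any connection against $g$).

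The delicate point is discovering the correct shape of $T_\nabla$. A purely skew ansatz $\nabla_{\rho e_1} e_2 - \nabla_{\rho e_2} e_1$ respects skew-symmetry but has vanishing symmetric part, contradicting axiom 2); an unsymmetrized Dorfman-type ansatz $\nabla_{\rho e_1} e_2$ satisfies axiom 1) directly but not axiom 2). The tensor $T_\nabla$ above is precisely the unique correction whose symmetrization in $(e_1,e_2)$ reconstructs $\partial g(e_1,e_2)$ via $g$-preservation of $\nabla$, while its antisymmetrization absorbs the anomalous $\nabla_{\rho e_i} e$ contributions in axiom 1); once it is written down, both verifications reduce to a bookkeeping calculation invoking only $g$-preservation and (\ref{defpartial2}).
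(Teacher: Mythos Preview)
Your proof is correct and follows essentially the same approach as the paper: the paper builds the skew bracket $[e_1,e_2]_{\nabla^0}=\nabla^0_{\rho e_1}e_2-\nabla^0_{\rho e_2}e_1-e_1\wedge_{\nabla^0}e_2$ with $g(e_3,e_1\wedge_{\nabla^0}e_2)=\tfrac12[g(e_1,\nabla^0_{\rho e_3}e_2)-g(e_2,\nabla^0_{\rho e_3}e_1)]$, and your correction term satisfies $T_\nabla(e_1,e_2)=-\,e_1\wedge_\nabla e_2+\partial g(e_1,e_2)$, so your product is exactly the product associated to the paper's bracket via (\ref{cr*}). The parametrization argument (difference is $C^\infty$-bilinear from the Leibniz-type identities, totally skew from axioms 1) and 2)) is likewise identical to the paper's, only phrased for products rather than brackets.
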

\begin{proof}
Fix a $g$-preserving connection $\nabla^0$ on $E$, i.e.,
\begin{equation}\label{preservg}
X(g(e_1,e_2))=g(\nabla^0_Xe_1,e_2)+g(e_1,\nabla^0_Xe_2),\hspace{3mm}X\in TM.
\end{equation}
Define a skew symmetric product\footnote{There is no relationship between $\wedge_{\nabla^0}$ (or similar products) and the usual wedge product.} $e_1\wedge_{\nabla^0} e_2\in\Gamma E$ by
\begin{equation}\label{wedgenabla}
g(e,e_1\wedge_{\nabla^0} e_2)=\frac{1}{2}[g(e_1,\nabla^0_{\rho e}e_2)
-g(e_2,\nabla^0_{\rho e}e_1)].\end{equation}
This product satisfies the property
\begin{equation}\label{propcdot} e_1\wedge_{\nabla^0} (fe_2)
=f(e_1\wedge_{\nabla^0} e_2)+g(e_1,e_2)\partial f.\end{equation}
Then, put
\begin{equation}\label{crptD}
[e_1,e_2]_{\nabla^0}=\nabla^0_{\rho e_1}e_2-\nabla^0_{\rho e_2}e_1
-e_1\wedge_{\nabla^0} e_2.\end{equation}
A technical computation shows that the skew symmetric bracket (\ref{crptD})  satisfies the axiom (\ref{ax1'}); we will say that (\ref{crptD}) is the {\it metric $\nabla^0$-bracket}.

Furthermore, any other metric bracket on $(E,\rho,g)$ must have the form
$$ [e_1,e_2]'=[e_1,e_2]_{\nabla^0}+\beta(e_1,e_2). $$
Put $$B(e_1,e_2,e_3)=g(\beta(e_1,e_2),e_3).$$
$B$ obviously is skew symmetric in $(e_1,e_2)$ and the skew symmetry in $(e_2,e_3)$ follows by subtracting the expressions of (\ref{ax1'}) for the two brackets. In a similar way, (\ref{lincroset}) implies the $C^\infty(M)$-linearity of $B$ in each of its arguments. Therefore, $B$ is a $3$-form on the vector bundle $E$, i.e., $B\in\Gamma\wedge^3L^*$.
\end{proof}

The following general considerations concerning $g$-preserving connections $\nabla$ on a metric algebroid $(E,g,\rho,[\,,\,])$ will be needed later.
With $\nabla$, we define the {\it modified bracket}
\begin{equation}\label{crmodD} [e_1,e_2]^{\nabla}=[e_1,e_2]+e_1\wedge_{\nabla} e_2,
\end{equation} where $e_1\wedge_{\nabla} e_2$ is defined by (\ref{wedgenabla}) with $\nabla$ instead of $\nabla^0$.
$e_1\wedge_{\nabla} e_2$ is skew symmetric and satisfies the condition
\begin{equation}\label{crmodDf} [e_1,fe_2]^{\nabla}=f[e_1,e_2]^{\nabla}+(\rho e_1)(f)e_2.
\end{equation}

The connection $\nabla$ has the {\it modified torsion tensor}
\begin{equation}\label{modtors0} T^{\nabla}(e_1,e_2)=
\nabla_{\rho e_1}e_2-\nabla_{\rho e_2}e_1-[e_1,e_2]^{\nabla}
\end{equation}
and the {\it Gualtieri torsion} \cite{G2}
\begin{equation}\label{defGualt0}
\mathcal{T}^{\nabla}(e_1,e_2,e_3)=g(T^{\nabla}(e_1,e_2),e_3).\end{equation}
The tensorial character of $T^{\nabla}$ and $ \mathcal{T}^{\nabla}$ follows from (\ref{crmodDf}). The fact that $B$ of Proposition \ref{existcr} is a form implies that the Gualtieri torsion is totally skew symmetric.

We may also define the {\it modified curvature operator}:
\begin{equation}\label{modcurvD} R^{\nabla}(e_1,e_2)e_3=\nabla_{\rho e_1}\nabla_{\rho e_2}e_3
-\nabla_{\rho e_2}\nabla_{\rho e_1}e_3-\nabla_{\rho[e_1,e_2]^{\nabla}}e_3,\end{equation}
which is easily seen to be $C^\infty(M)$-trilinear.

Below, we will show that, on a flat bi-Lagrangian manifold $(M,L,\tilde{L})$, $TM=L\oplus\tilde{L}$ has a natural structure of a metric algebroid with metric $\gamma$ and anchor $\rho=Id$ and with a bracket that is the relevant bracket for $T$-duality because, as we will see later on, it puts $L$ and $L^*$ on an equal footing. We will also see that this bracket is the $C$-bracket of the physics literature \cite{{HHZ},{HZ}}.

The manifold $M$ has the canonical flat connection $\nabla^0$ defined by the local equations
\begin{equation}\label{eqDM} \nabla^0\frac{\partial}{\partial x^i}=0,\,
\nabla^0\frac{\partial}{\partial \tilde{x}_j}=0\end{equation}
with respect to the distinguished coordinates $(x^i,\tilde{x}_j)$ of (\ref{locafin}) and $\nabla^0$ coincides with both the Levi-Civita connection of $\gamma$ and the symplectic, bi-Lagrangian connection of $\omega$.

Accordingly, we have the metric $\nabla^0$-bracket given by (\ref{crptD}), which, in the present case and since $\nabla^0$ has a vanishing usual torsion, becomes
\begin{equation}\label{Ccroset} \begin{array}{c}
[\mathbf{X},\mathbf{Y}]_{\nabla^0}=\nabla^0_{\mathbf{X}}\mathbf{Y}
-\nabla^0_{\mathbf{Y}}\mathbf{X}
-\mathbf{X}\wedge_{\nabla^0} \mathbf{Y} =[ \mathbf{X},\mathbf{Y}]-\mathbf{X}\wedge_{\nabla^0} \mathbf{Y}\vspace*{2mm}\\ (\gamma(\mathbf{Z},\mathbf{X}\wedge_{\nabla^0} \mathbf{Y})
=\frac{1}{2}[\gamma(\mathbf{X},\nabla^0_\mathbf{Z}\mathbf{Y})-
\gamma(\mathbf{Y},\nabla^0_\mathbf{Z}\mathbf{X})]) \end{array}\end{equation}
where the unindexed bracket is the usual Lie bracket of vector fields. If either $ \mathbf{X},\mathbf{Y}\in L$ or $ \mathbf{X},\mathbf{Y}\in \tilde{L}$ then $ \mathbf{X}\wedge_{\nabla^0}\mathbf{Y}=0$ and the $\nabla^0$-bracket reduces to the Lie bracket.
By (\ref{cr*}), the metric product that corresponds to the $\nabla^0$-bracket is
\begin{equation}\label{prodD} \mathbf{X}\bigstar_{\nabla^0}\mathbf{Y}=[\mathbf{X},\mathbf{Y}]-\mathbf{X}\barwedge _{\nabla^0} \mathbf{Y}+\partial(\gamma(\mathbf{X},\mathbf{Y})).\end{equation}
For the $\nabla^0$-bracket, equation (\ref{ax1'}) takes the form
\begin{equation}\label{axvCalg}\begin{array}{c} \mathbf{Z}(\gamma( \mathbf{X},\mathbf{Y}))
=\gamma([\mathbf{Z},\mathbf{X}]_{\nabla^0},\mathbf{Y})+\frac{1}{2} \mathbf{Y}(\gamma( \mathbf{Z},\mathbf{X}))\vspace*{2mm}\\ +\gamma([\mathbf{Z},\mathbf{Y}]_{\nabla^0},\mathbf{X})+\frac{1}{2} \mathbf{X}(\gamma( \mathbf{Z},\mathbf{Y})).\end{array}\end{equation}
\begin{prop}\label{prop0} The metric bracket and product defined by the connection $\nabla^0$ are characterized by the distinguished local coordinate conditions
\begin{equation}\label{starcu0} \begin{array}{l}
\frac{\partial}{\partial x^i}\wedge_{\nabla^0}
\frac{\partial}{\partial x^j}=0,\; \frac{\partial}{\partial x^i}\wedge_{\nabla^0}
\frac{\partial}{\partial \tilde{x}_j}=0,\;
\frac{\partial}{\partial \tilde{x}_i}\wedge_{\nabla^0}
\frac{\partial}{\partial \tilde{x}_j}=0,
\vspace*{2mm}\\

[\frac{\partial}{\partial x^i},
\frac{\partial}{\partial x^j}]_{\nabla^0}=0,\; [\frac{\partial}{\partial x^i},
\frac{\partial}{\partial \tilde{x}_j}]_{\nabla^0}=0,\;
[\frac{\partial}{\partial \tilde{x}_i},
\frac{\partial}{\partial \tilde{x}_j}]_{\nabla^0}=0,\vspace*{2mm}\\

\frac{\partial}{\partial x^i}\bigstar_{\nabla^0}
\frac{\partial}{\partial x^j}=0,\; \frac{\partial}{\partial x^i}\bigstar_{\nabla^0}
\frac{\partial}{\partial \tilde{x}_j}=0,\;
\frac{\partial}{\partial \tilde{x}_i}\bigstar_{\nabla^0}
\frac{\partial}{\partial \tilde{x}_j}=0.
\end{array}
\end{equation} \end{prop}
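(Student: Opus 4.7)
The plan is to verify the nine stated equalities by direct computation, exploiting the two elementary facts that by (\ref{eqDM}) the connection $\nabla^0$ annihilates every distinguished coordinate basis vector and that by (\ref{defgamma}) the metric $\gamma$ takes constant values ($0$ or $\delta_i^j$) on any pair of such vectors.

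For the three wedge-product identities, I would unfold the defining relation displayed parenthetically in (\ref{Ccroset}): for arbitrary $\mathbf{Z}\in TM$, the quantity $\gamma(\mathbf{Z},\mathbf{X}\wedge_{\nabla^0}\mathbf{Y})=\tfrac{1}{2}[\gamma(\mathbf{X},\nabla^0_{\mathbf{Z}}\mathbf{Y})-\gamma(\mathbf{Y},\nabla^0_{\mathbf{Z}}\mathbf{X})]$ vanishes whenever $\mathbf{X}$ and $\mathbf{Y}$ are drawn from $\{\partial/\partial x^i,\partial/\partial\tilde{x}_j\}$, because $\nabla^0$ kills both arguments. Non-degeneracy of $\gamma$ on $TM$ then forces the wedge itself to vanish.

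For the three bracket identities I would substitute into $[\mathbf{X},\mathbf{Y}]_{\nabla^0}=[\mathbf{X},\mathbf{Y}]-\mathbf{X}\wedge_{\nabla^0}\mathbf{Y}$: the ordinary Lie bracket of any two distinguished coordinate fields is zero, and the wedge has just been shown to vanish. For the three $\bigstar_{\nabla^0}$-identities I would substitute into (\ref{prodD}); the bracket and wedge vanish by what precedes, and the residual term $\partial(\gamma(\mathbf{X},\mathbf{Y}))$ also vanishes since $\gamma(\mathbf{X},\mathbf{Y})$ is locally constant, so $d(\gamma(\mathbf{X},\mathbf{Y}))=0$ and therefore $\partial(\gamma(\mathbf{X},\mathbf{Y}))=0$.

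To justify the word \emph{characterized}, I would observe that properties a) and b) of Proposition \ref{nonflin}, applied with $\rho=Id$ and the fixed $\gamma$ and $\partial$, show that the values of $\bigstar_{\nabla^0}$ on the distinguished frame determine $\bigstar_{\nabla^0}$ on arbitrary sections by $C^\infty(M)$-expansion; the same then holds for $[\,,\,]_{\nabla^0}$ via (\ref{cr*}) and (\ref{lincroset}), and for $\wedge_{\nabla^0}$ via (\ref{Ccroset}) together with non-degeneracy of $\gamma$. There is no genuine obstacle; the content of the proposition is bookkeeping, and its interest is the compact local description it furnishes of the bracket and product that will reappear in the sequel as the $C$-bracket of the physics literature.
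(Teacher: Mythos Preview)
Your proposal is correct and follows essentially the same route as the paper's own proof: the paper likewise derives the first line of (\ref{starcu0}) from (\ref{eqDM}), the second line from (\ref{Ccroset}), and the third line from the vanishing of $\partial$ on constants (obtained there via (\ref{partial3}), equivalently your observation that $d$ of a constant is zero). For the characterization clause, the paper argues exactly as you do, invoking the unique extension from the coordinate frame via (\ref{lincroset}).
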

\begin{proof}
The first line of (\ref{starcu0}) follows from (\ref{eqDM}). Then, the second line is a direct consequence of (\ref{Ccroset}). Finally, the third line follows from the formula
\begin{equation}\label{partial3} \partial f=\frac{1}{2}(0,df),\;\;\forall f\in C^\infty(M),\end{equation}
which is a consequence of (\ref{defpartial2}).

The $\nabla^0$-bracket is the only metric bracket of $(TM,\gamma,Id)$ that satisfies (\ref{starcu0}) because there exists only one extension of (\ref{starcu0}) that satisfies (\ref{lincroset}).
\end{proof}

If we move from $TM$ to the synonymous bundle $L\oplus L^*$, the brackets (\ref{starcu0}) yield corresponding brackets
\begin{equation}\label{CLcu0} \begin{array}{c}[(\frac{\partial}{\partial x^i},0),
(\frac{\partial}{\partial x^j},0)]_{\nabla^0}=0,\;
[(\frac{\partial}{\partial x^i},0),
(0,dx^j)]_{\nabla^0}=0,
\vspace*{2mm}\\

[(0,dx^i),(0,dx^j)]_{\nabla^0}=0.\end{array}\end{equation}
With (\ref{lincroset}), the brackets (\ref{CLcu0}) extend to the metric bracket that consists of the Lie algebroid brackets of $L,L^*$ and of
$$\begin{array}{l}
[(X,0),(0,\beta)]_{\nabla^0}
=([-\frac{1}{2}\beta_j\frac{\partial\xi^h}{\partial\tilde{x}_j}
-\frac{1}{4}\xi^j\frac{\partial\beta_j}{\partial\tilde{x}_h}
+\frac{1}{4}\beta_j\frac{\partial\xi^j}{\partial\tilde{x}_h}]
\frac{\partial}{\partial x^h},\vspace*{2mm}\\

[\xi^j\frac{\partial\beta_h}{\partial x^j}-\frac{1}{2}\xi^j\frac{\partial\beta_j}{\partial x^h}
+\frac{1}{2}\beta_j\frac{\partial\xi^j}{\partial x^h}]dx^h)
\;\;(X=\xi^j\frac{\partial}{\partial x^j}\in L,\beta=\beta_jdx^j\in L^*).
\end{array}$$

The corresponding general expression of this bracket is similar to the expression of the bracket on the double of a Lie bialgebroid \cite{LWX}:
\begin{equation}\label{CcuLWZ} \begin{array}{c}
[(X,\alpha),(Y,\beta)]_{\nabla^0}=
([X,Y]+\mathcal{L}^*_\alpha Y-\mathcal{L}^*_\beta X-\frac{1}{2}d^*(\alpha(Y)-
\beta(X)),\vspace*{2mm}\\
\frac{1}{2}\flat_\gamma[\sharp_\gamma\alpha,\sharp_\gamma\beta]
+\mathcal{L}_X\beta-\mathcal{L}_Y\alpha+\frac{1}{2}d(\alpha(Y)-\beta(X))),
\end{array}\end{equation}
where $X,Y\in L,\alpha,\beta\in L^*$, $\mathcal{L},d$ are the Lie derivative and exterior differential in the Lie algebroid $L$, while $\mathcal{L}^*,d^*$ are the corresponding operators in the Lie algebroid $L^*$ with the bracket indicated in the first term of the second component of (\ref{CcuLWZ}).
This may be checked by calculations and using the definition of the Lie derivative and the exterior differential in a Lie algebroid \cite{Mk}.

Formula (\ref{CcuLWZ}) justifies the earlier assertion that the $\nabla^0$-bracket puts $L$ and $L^*$ on the same footing. In \cite{HZ}, it was shown that formula (\ref{CcuLWZ}) expresses the $C$-bracket used in the physics literature, therefore, the $\nabla^0$-bracket is the $C$-bracket.

We also notice another general expression of the $\nabla^0$-bracket that follows from results given in \cite{V3}, namely,
\begin{equation}\label{CcuLtildeL}
\begin{array}{l}
[X_L,Y_L]_{\nabla^0}=[X_L,Y_L],\, [X_{\tilde{L}},Y_{\tilde{L}}]_{\nabla^0}=[X_{\tilde{L}},Y_{\tilde{L}}],
\vspace*{2mm}\\
\gamma(Z_L,[X_L,Y_{\tilde{L}}]_{\nabla^0})=\gamma([Z_L,X_L],Y_{\tilde{L}})
+X_L(\gamma(Z_L,Y_{\tilde{L}}))
-\frac{1}{2}Z_L(\gamma(X_L,Y_{\tilde{L}})),\vspace*{2mm}\\
\gamma(Z_{\tilde{L}},[X_L,Y_{\tilde{L}}]_{\nabla^0})=-\gamma(X_L,
[Z_{\tilde{L}},Y_{\tilde{L}}])
-Y_{\tilde{L}}(\gamma(Z_{\tilde{L}},X_L))
+\frac{1}{2}Z_{\tilde{L}}(\gamma(X_L,Y_{\tilde{L}})),
\end{array}
\end{equation}
where the indices $L,\tilde{L}$ indicate the subbundles where the respective vectors sit. The easiest way to check these formulas is by checking that they imply the second line of (\ref{starcu0}) and yield a bracket that satisfies (\ref{lincroset}).

We end this section by the following remark.
\begin{rem}\label{Sfield} {\rm Like the usual Courant bracket \cite{G}, the $\nabla^0$-bracket and product admit new automorphisms called $S$-field transformations\footnote{In generalized geometry these are called $B$-field transformations, but, here, we have followed the literature on double field theory where the letter $B$ is used to denote the $2$-form of the field.}, where $S\in\wedge^2L^*$ is a closed form on $L$. These are defined by
\begin{equation}\label{Sfieldtr} (X,\alpha)\mapsto(X,\alpha+i(X)S)
\hspace{3mm}(X\in L,\alpha\in L^*).\end{equation} It suffices to check that the transformation (\ref{Sfieldtr}) preserves the brackets (\ref{CLcu0}). Only the first bracket (\ref{CLcu0}) imposes a restriction, which is exactly $d_LS=0$, where $d_L$ denotes the exterior differential of the Lie algebroid $L$.}\end{rem}
\section{Brackets of strongly foliated vector fields}
The $\bigstar_{\nabla^0}$-product satisfies the axioms 1), 2) of a metric algebroid with $E=TM,g=\gamma,\rho=Id$. Therefore, $\bigstar_{\nabla^0}$ cannot satisfy the Leibniz (Jacobi) identity
\begin{equation}\label{Leibnizrule}
\mathbf{X}\bigstar_{\nabla^0}(\mathbf{Y}\bigstar_{\nabla^0} \mathbf{Z})-(\mathbf{X}\bigstar_{\nabla^0} \mathbf{Y})\bigstar_{\nabla^0} \mathbf{Z} -\mathbf{Y}\bigstar_{\nabla^0}(\mathbf{X}\bigstar_{\nabla^0} \mathbf{Z})=0\end{equation} since otherwise $(TM,\gamma,Id,\bigstar_{\nabla^0})$ would be a Courant algebroid, and we would necessarily get the preservation of the bracket by the anchor (e.g., \cite{V5}). In our case, this means the equality of the $\nabla^0$-bracket with the Lie bracket, in contradiction with (\ref{Ccroset}).

However, the identity (\ref{Leibnizrule}) holds for foliated cross sections\footnote{We refer the reader to \cite{Mol} for the required notions of foliation theory. In particular, we recall that, on a foliated manifold, an object is said to be foliated if it is the lift of a corresponding object on the space of the leaves.} of $TM$.
More precisely, the locally affine structure whose existence follows from the coordinate transformations (\ref{locafin}) may also be seen as an $ \tilde{L}$-foliated structure\footnote{It may also be seen as an $L$-foliated structure. We make the convention to use the term foliated for $ \tilde{L}$-foliated.} on the vector bundle $TM$. A foliated cross section of the $ \tilde{L}$-foliated bundle $TM$, i.e., a vector field $ \mathbf{X}$ on $M$ of the local form
\begin{equation}\label{folvf} \mathbf{X}=\xi^i(x^j)\frac{\partial}{\partial x^i} +\tilde{\xi}_i(x^j)\frac{\partial}{\partial \tilde{x}_i}
\end{equation} with respect to distinguished coordinates,
will be called a {\it strongly foliated vector field}. We will denote by $\chi_{{\rm sf}}(M)$the space of strongly foliated vector fields.
The demand that $ \mathbf{X}$ is strongly foliated is more restrictive  than the demand that $ \mathbf{X}$ is a foliated vector field in the sense of foliation theory, where, generally, $TM$ is not a foliated bundle; for a foliated (not strongly) vector field $ \mathbf{X}$ only the components $\xi^i$ do not depend on the coordinates $ \tilde{x}_i$.

We begin by proving the following auxiliary results.
\begin{lemma}\label{corolarLz} For any $\tilde{L}$-foliated function $f$ and any $ \mathbf{Z}\in\chi_{{\rm sf}}(M)$ one has
\begin{equation}\label{dfX} (\partial f)\bigstar_{\nabla^0}\mathbf{Z}=0, \;\mathbf{Z}\bigstar_{\nabla^0}(\partial f)=\partial( \mathbf{Z}f).\end{equation}
\end{lemma}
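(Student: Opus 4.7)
My plan is to reduce both identities to a direct computation in distinguished coordinates, using the vanishing of $\bigstar_{\nabla^0}$ on the coordinate basis (third line of (\ref{starcu0})) together with properties a), b) of Proposition \ref{nonflin}. The second identity will then be almost automatic given the first.

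First, using (\ref{partial3}) and the fact that $f$ is $\tilde{L}$-foliated (so $df=(\partial f/\partial x^i)\,dx^i$ with $\partial f/\partial x^i$ depending only on the $x^j$), I can write
\[
\partial f \;=\; a_i(x)\,\frac{\partial}{\partial\tilde{x}_i},\qquad a_i(x):=\tfrac12\,\frac{\partial f}{\partial x^i},
\]
while $\mathbf{Z}=\xi^j(x)\,\partial/\partial x^j+\tilde\xi_j(x)\,\partial/\partial\tilde{x}_j$ has coefficients depending only on $x$ by the definition of $\chi_{\rm sf}(M)$.

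Next I compute $(\partial f)\bigstar_{\nabla^0}\mathbf Z$. First, expand $(\partial/\partial\tilde{x}_i)\bigstar_{\nabla^0}\mathbf{Z}$ by property a) of Proposition \ref{nonflin}: the two terms $(\partial/\partial\tilde{x}_i)\bigstar_{\nabla^0}(\partial/\partial x^j)$ and $(\partial/\partial\tilde{x}_i)\bigstar_{\nabla^0}(\partial/\partial\tilde{x}_j)$ vanish by Proposition \ref{prop0}, and the extra anchor-derivative terms $(\partial\xi^j/\partial\tilde{x}_i)$ and $(\partial\tilde\xi_j/\partial\tilde{x}_i)$ vanish because $\mathbf{Z}$ is strongly foliated. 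Thus $(\partial/\partial\tilde{x}_i)\bigstar_{\nabla^0}\mathbf{Z}=0$. Now apply property b) to $(a_i\,\partial/\partial\tilde{x}_i)\bigstar_{\nabla^0}\mathbf{Z}$:
\[
(\partial f)\bigstar_{\nabla^0}\mathbf{Z}
=-(\mathbf{Z} a_i)\,\frac{\partial}{\partial\tilde{x}_i}
+2\gamma\!\Bigl(\frac{\partial}{\partial\tilde{x}_i},\mathbf{Z}\Bigr)\,\partial a_i .
\]
Here $\mathbf{Z} a_i=\xi^j\,\partial a_i/\partial x^j$ and $\gamma(\partial/\partial\tilde{x}_i,\mathbf{Z})=\xi^i$, while $\partial a_i=\tfrac12(\partial a_i/\partial x^j)\,\partial/\partial\tilde{x}_j$ because $a_i$ depends only on $x$. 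The right-hand side then becomes
\[
-\xi^j\,\frac{\partial a_i}{\partial x^j}\,\frac{\partial}{\partial\tilde{x}_i}
+\xi^i\,\frac{\partial a_i}{\partial x^j}\,\frac{\partial}{\partial\tilde{x}_j},
\]
and after relabeling the second sum this equals zero thanks to the symmetry $\partial a_i/\partial x^j=\partial a_j/\partial x^i$ (which is the symmetry of the Hessian $\partial^2 f/\partial x^i\partial x^j$). This is the only delicate step — the cancellation hinges on $a_i$ being a gradient — and it is what forces the $\tilde{L}$-foliation hypothesis on $f$.

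For the second identity I use the polarised normalisation (\ref{normal2}): $\mathbf{Z}\bigstar_{\nabla^0}(\partial f)+(\partial f)\bigstar_{\nabla^0}\mathbf{Z}=2\partial(\gamma(\mathbf{Z},\partial f))$. By (\ref{defpartial2}), $\gamma(\partial f,\mathbf{Z})=\tfrac12\mathbf{Z} f$, and the preceding step kills the second summand on the left, yielding $\mathbf{Z}\bigstar_{\nabla^0}(\partial f)=\partial(\mathbf{Z} f)$ as required.
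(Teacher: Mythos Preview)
Your proof is correct and follows essentially the same strategy as the paper: reduce via Proposition~\ref{nonflin} and the vanishing products (\ref{starcu0}) to a direct check, then obtain the second identity from the first via (\ref{normal2}) and (\ref{defpartial2}). The paper organises the reduction slightly differently (first observing $\partial f\bigstar_{\nabla^0}(h\mathbf{Z})=h(\partial f\bigstar_{\nabla^0}\mathbf{Z})$ for $\tilde L$-foliated $h$, then checking on basis $\mathbf{Z}$) and leaves the Hessian-symmetry cancellation implicit, whereas you make that key step explicit --- a welcome clarification.
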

\begin{proof} If we prove the first relation (\ref{dfX}), the second will follow from (\ref{normal2}). As for the first relation, using Proposition \ref{nonflin} and formula (\ref{partial3}), we see that, for any $\tilde{L}$-foliated functions $f,h$, one has
$$\partial f\bigstar_{\nabla^0}(h\mathbf{Z})=h(\partial f\bigstar_{\nabla^0}\mathbf{Z}).$$
Thus, it suffices to check the relation for $\mathbf{Z}=\partial/\partial x^i,\mathbf{Z}=\partial/\partial\tilde{x}_i$. This follows from (\ref{starcu0}), (\ref{partial3}) and Proposition \ref{nonflin}.
\end{proof}
\begin{lemma}\label{lemaLz} If $\mathbf{X},\mathbf{Y}\in\chi_{{\rm sf}}(M)$, then $\mathbf{X}\wedge_{\nabla^0}\mathbf{Y}\in\chi_{{\rm sf}}(M)$ and is tangent to the leaves of $\tilde{L}$. Under the same hypothesis, one also has
\begin{equation}\label{Liesf} pr_L[\mathbf{X},\mathbf{Y}]_{\nabla^0}= [pr_L\mathbf{X},pr_L\mathbf{Y}].\end{equation}\end{lemma}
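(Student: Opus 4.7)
The plan is to work entirely in distinguished local coordinates, where the canonical connection $\nabla^0$ kills the coordinate frame, and then read off everything by pairing with $\gamma$. Write
\[
\mathbf{X}=\xi^i(x)\frac{\partial}{\partial x^i}+\tilde{\xi}_i(x)\frac{\partial}{\partial\tilde{x}_i},\qquad
\mathbf{Y}=\eta^i(x)\frac{\partial}{\partial x^i}+\tilde{\eta}_i(x)\frac{\partial}{\partial\tilde{x}_i},
\]
so that every component depends only on the $x^j$. The first key observation is that if $\mathbf{Z}=Z^j\partial/\partial x^j+\tilde{Z}_j\partial/\partial\tilde{x}_j$, then by (\ref{eqDM}) we have $\nabla^0_{\mathbf{Z}}\mathbf{Y}=\mathbf{Z}(\eta^i)\partial/\partial x^i+\mathbf{Z}(\tilde{\eta}_i)\partial/\partial\tilde{x}_i$, and because $\eta^i,\tilde{\eta}_i$ depend only on $x$, this reduces to $\nabla^0_{pr_L\mathbf{Z}}\mathbf{Y}$. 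So for strongly foliated $\mathbf{X},\mathbf{Y}$, the defining formula (\ref{Ccroset}) for the wedge product becomes
\[
\gamma(\mathbf{Z},\mathbf{X}\wedge_{\nabla^0}\mathbf{Y})
=\tfrac{1}{2}Z^j\bigl[\xi^i\,\partial_{x^j}\tilde{\eta}_i+\tilde{\xi}_i\,\partial_{x^j}\eta^i
-\eta^i\,\partial_{x^j}\tilde{\xi}_i-\tilde{\eta}_i\,\partial_{x^j}\xi^i\bigr],
\]
depending only on $pr_L\mathbf{Z}$.

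From this the claims on $\mathbf{X}\wedge_{\nabla^0}\mathbf{Y}$ follow immediately. First, taking $\mathbf{Z}\in\tilde{L}$ gives $Z^j=0$, so $\mathbf{X}\wedge_{\nabla^0}\mathbf{Y}\in\tilde{L}^\perp$; since $\tilde{L}$ is $\gamma$-Lagrangian, $\tilde{L}^\perp=\tilde{L}$, so $\mathbf{X}\wedge_{\nabla^0}\mathbf{Y}$ is tangent to the leaves of $\tilde{L}$. Second, taking $\mathbf{Z}=\partial/\partial x^k$ and using $\gamma(\partial/\partial x^k,\partial/\partial\tilde{x}_i)=\delta^i_k$, one reads off the coefficient of $\partial/\partial\tilde{x}_k$ in $\mathbf{X}\wedge_{\nabla^0}\mathbf{Y}$; the above bracket is a function of the $x^j$ alone, so $\mathbf{X}\wedge_{\nabla^0}\mathbf{Y}\in\chi_{\rm sf}(M)$.

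For (\ref{Liesf}), apply (\ref{Ccroset}) to rewrite $[\mathbf{X},\mathbf{Y}]_{\nabla^0}=[\mathbf{X},\mathbf{Y}]-\mathbf{X}\wedge_{\nabla^0}\mathbf{Y}$. The wedge term lies in $\tilde{L}$ by what was just shown, so $pr_L[\mathbf{X},\mathbf{Y}]_{\nabla^0}=pr_L[\mathbf{X},\mathbf{Y}]$. A direct expansion of the Lie bracket gives
\[
pr_L[\mathbf{X},\mathbf{Y}]=\bigl(\xi^j\partial_{x^j}\eta^i-\eta^j\partial_{x^j}\xi^i
+\tilde{\xi}_j\partial_{\tilde{x}_j}\eta^i-\tilde{\eta}_j\partial_{\tilde{x}_j}\xi^i\bigr)\tfrac{\partial}{\partial x^i},
\]
but the last two terms vanish because $\xi^i,\eta^i$ are independent of the $\tilde{x}_j$, leaving precisely $[pr_L\mathbf{X},pr_L\mathbf{Y}]$.

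There is essentially no obstacle here beyond bookkeeping: the entire argument rests on the single structural fact that, with respect to distinguished coordinates, $\nabla^0$ reduces to partial differentiation of the components, and the components of a strongly foliated field depend only on the $x^j$, which causes $\nabla^0_{\mathbf{Z}}$ to see only $pr_L\mathbf{Z}$. The only point worth flagging is the use of the Lagrangian property $\tilde{L}^\perp=\tilde{L}$ to convert the vanishing of $\gamma(\mathbf{Z},\mathbf{X}\wedge_{\nabla^0}\mathbf{Y})$ for $\mathbf{Z}\in\tilde{L}$ into the tangency statement.
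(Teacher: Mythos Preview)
Your proof is correct and follows essentially the same approach as the paper. The only difference is organizational: the paper deduces the first assertion by appealing to the vanishing wedge products of the coordinate basis (\ref{starcu0}) together with the function-multiplication rule (\ref{propcdot}), whereas you compute $\gamma(\mathbf{Z},\mathbf{X}\wedge_{\nabla^0}\mathbf{Y})$ directly; for the second assertion the paper decomposes $[\mathbf{X},\mathbf{Y}]$ into the four $L/\tilde{L}$ cross-brackets instead of writing out the coordinate expression, but the content is identical.
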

\begin{proof} The first assertion follows from formulas (\ref{starcu0}) and property (\ref{propcdot}). The second assertion follows by noticing that $\mathbf{X}\wedge_{\nabla^0}\mathbf{Y}\in\chi_{{\rm sf}}(M)$ implies
$$[\mathbf{X},\mathbf{Y}]= [pr_L\mathbf{X},pr_L\mathbf{Y}]+[pr_L\mathbf{X},pr_{\tilde{L}}\mathbf{Y}]
+[pr_{\tilde{L}}\mathbf{X},pr_L\mathbf{Y}],$$ where the last two terms belong to $\tilde{L}$.\end{proof}

Now, we will prove the announced result.
\begin{prop}\label{propLz} The restriction of the product $\bigstar_{\nabla^0}$ to $\chi_{{\rm sf}}(M)$ satisfies the identity {\rm(\ref{Leibnizrule})}.\end{prop}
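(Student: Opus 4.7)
The plan is to show that the Jacobiator
$$J(\mathbf{X},\mathbf{Y},\mathbf{Z}) := \mathbf{X}\bigstar_{\nabla^0}(\mathbf{Y}\bigstar_{\nabla^0}\mathbf{Z}) - (\mathbf{X}\bigstar_{\nabla^0}\mathbf{Y})\bigstar_{\nabla^0}\mathbf{Z} - \mathbf{Y}\bigstar_{\nabla^0}(\mathbf{X}\bigstar_{\nabla^0}\mathbf{Z})$$
is $C^\infty_{\tilde{L}}(M)$-multilinear on $\chi_{\rm sf}(M)$ (where $C^\infty_{\tilde{L}}(M)$ denotes the ring of $\tilde{L}$-foliated functions) and then to invoke Proposition \ref{prop0} on the distinguished coordinate frame. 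As a preliminary step one checks that $\chi_{\rm sf}(M)$ is closed under $\bigstar_{\nabla^0}$: by (\ref{prodD}) the product is the sum of the Lie bracket (which is strongly foliated), $\mathbf{X}\wedge_{\nabla^0}\mathbf{Y}$ (strongly foliated by Lemma \ref{lemaLz}) and $\partial(\gamma(\mathbf{X},\mathbf{Y}))$, which via (\ref{partial3}) equals $(0,\tfrac12 d\gamma(\mathbf{X},\mathbf{Y}))$ with $\gamma(\mathbf{X},\mathbf{Y})$ itself $\tilde{L}$-foliated.

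To obtain linearity in the third slot, I would expand $J(\mathbf{X},\mathbf{Y},f\mathbf{Z})$ using Proposition \ref{nonflin}(a); the routine calculation produces
$$J(\mathbf{X},\mathbf{Y},f\mathbf{Z}) = fJ(\mathbf{X},\mathbf{Y},\mathbf{Z}) + \big([\mathbf{X},\mathbf{Y}](f) - (\mathbf{X}\bigstar_{\nabla^0}\mathbf{Y})(f)\big)\mathbf{Z},$$
and the correction vanishes for $f\in C^\infty_{\tilde{L}}(M)$ because the difference $[\mathbf{X},\mathbf{Y}]-\mathbf{X}\bigstar_{\nabla^0}\mathbf{Y}=\mathbf{X}\wedge_{\nabla^0}\mathbf{Y}-\partial(\gamma(\mathbf{X},\mathbf{Y}))$ is everywhere tangent to $\tilde{L}$ (by Lemma \ref{lemaLz} and (\ref{partial3})), and so annihilates $\tilde{L}$-foliated functions. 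Linearity in the first and second slots is analogous but requires Proposition \ref{nonflin}(b), which introduces terms of the shape $2\gamma(\cdot,\cdot)\partial f$; after these are re-expanded inside the next $\bigstar_{\nabla^0}$-product, Lemma \ref{corolarLz} (giving $(\partial f)\bigstar_{\nabla^0}\mathbf{Z}=0$ and $\mathbf{Z}\bigstar_{\nabla^0}(\partial f)=\partial(\mathbf{Z}f)$ for $f\in C^\infty_{\tilde{L}}(M)$ and strongly foliated $\mathbf{Z}$) causes the unwanted extra terms to cancel.

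Granted this trilinearity, Proposition \ref{prop0} says $\bigstar_{\nabla^0}$ vanishes on every pair from $\{\partial/\partial x^i,\partial/\partial\tilde{x}_j\}$, so $J$ vanishes on every triple from this frame; since any strongly foliated vector field is a $C^\infty_{\tilde{L}}(M)$-linear combination of the frame, the multilinearity yields $J\equiv 0$ on $\chi_{\rm sf}(M)$. The main obstacle will be the bookkeeping for the first and second slots, where the $2\gamma(\cdot,\cdot)\partial f$ corrections from Proposition \ref{nonflin}(b) must be shown to collapse via Lemma \ref{corolarLz}. A slicker alternative is to use formula (\ref{CcuLWZ}) to identify the restriction of $\bigstar_{\nabla^0}$ to $\chi_{\rm sf}(M)$ with the Dorfman bracket of the Courant algebroid $T\oplus T^*$ over the leaf space of $\tilde{L}$ (the $L^*$-sided terms in (\ref{CcuLWZ}) all vanish because $\sharp_\gamma\alpha$, $\sharp_\gamma\beta$ become $\tilde{x}$-independent sections of $\tilde{L}$), for which the Leibniz identity (\ref{Leibnizrule}) is a standard axiom.
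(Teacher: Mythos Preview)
Your proposal is correct and follows essentially the same route as the paper: establish closure of $\chi_{\rm sf}(M)$ under $\bigstar_{\nabla^0}$, use Lemma~\ref{corolarLz} together with the fact that $[\mathbf{X},\mathbf{Y}]-\mathbf{X}\bigstar_{\nabla^0}\mathbf{Y}\in\tilde{L}$ (equivalently, the paper's observation $(\mathbf{X}\bigstar_{\nabla^0}\mathbf{Y})(f)=[pr_L\mathbf{X},pr_L\mathbf{Y}](f)$ for $\tilde{L}$-foliated $f$) to obtain $C^\infty_{\tilde{L}}(M)$-trilinearity of the Jacobiator, and then reduce to the distinguished frame via (\ref{starcu0}). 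Your ``slicker alternative'' via (\ref{CcuLWZ}) is not how the paper proceeds, but it is essentially the content of the paper's subsequent Corollary~\ref{algCfol}.
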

\begin{proof}
From Lemma \ref{lemaLz} and formula (\ref{Ccroset}), it follows that $\chi_{{\rm sf}}(M)$ is closed under the $\nabla^0$-bracket and we also have the decomposition of the $\nabla^0$-bracket of strongly foliated vector fields into the $L$ and $\tilde{L}$ components. Furthermore, since if $\mathbf{X},\mathbf{Y}\in\chi_{{\rm sf}}(M)$, $\gamma(\mathbf{X},\mathbf{Y})$ is an $\tilde{L}$-foliated function, formula (\ref{prodD}) shows that $\chi_{{\rm sf}}(M)$ is also closed under the product $\bigstar_{\nabla^0}$, and we have the decomposition of the product into the $L$ and $\tilde{L}$ components.

To prove that (\ref{Leibnizrule}) holds we notice that, if a function $f$ is $\tilde{L}$-foliated, then $\mathbf{X}f=(pr_L\mathbf{X})f$. In particular, by Lemma \ref{lemaLz}, $$\mathbf{X}\bigstar_{\nabla^0}\mathbf{Y}(f)=[pr_L\mathbf{X},pr_L\mathbf{Y}](f),
\;\;\forall \mathbf{X},\mathbf{Y}\in\chi_{{\rm sf}}(M).$$
This observation, together with Lemma \ref{corolarLz} allows us to check that the left hand side of (\ref{Leibnizrule})
is linear with respect to the multiplication of any of its arguments by an $\tilde{L}$-foliated function. Accordingly, it suffices to prove (\ref{Leibnizrule}) for the local vector fields $(\partial/\partial x^i, \partial/\partial \tilde{x}_i)$. This straightforwardly follows from (\ref{starcu0}).\end{proof}
\begin{corol}\label{corolJac} The restriction of the bracket $[\,,\,]_{\nabla^0}$ to $\chi_{{\rm sf}}(M)$ satisfies the property
\begin{equation}\label{Jptcr} \begin{array}{r} \sum_{Cycl(\mathbf{X},\mathbf{Y},\mathbf{Z})} [[\mathbf{X},\mathbf{Y}]_{\nabla^0}, \mathbf{Z}]_{\nabla^0}=
\frac{1}{3}\sum_{Cycl(\mathbf{X},\mathbf{Y},\mathbf{Z})} \partial(\gamma([\mathbf{X},\mathbf{Y}]_{\nabla^0},\mathbf{Z}))
\vspace*{2mm}\\ = \frac{1}{2}\sum_{Cycl(\mathbf{X},\mathbf{Y},\mathbf{Z})} \partial(\gamma([\mathbf{X},\mathbf{Y}],\mathbf{Z})) =\sum_{Cycl(\mathbf{X},\mathbf{Y},\mathbf{Z})} \partial(\gamma(\mathbf{Y}\wedge_{\nabla^0}\mathbf{X},\mathbf{Z})).
\end{array}\end{equation}
\end{corol}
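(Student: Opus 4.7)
The plan is to prove the three equalities in order, treating the first (a Jacobiator identity for the skew bracket) as the substantive step. Throughout, I will use formula (\ref{cr*}) as a dictionary between $[\,,\,]_{\nabla^0}$ and $\bigstar_{\nabla^0}$, so that the Jacobi identity supplied by Proposition \ref{propLz} can be imported into a statement about the bracket. I note (and use without further comment) that when $\mathbf{X},\mathbf{Y}\in\chi_{{\rm sf}}(M)$ the function $\gamma(\mathbf{X},\mathbf{Y})$ is $\tilde{L}$-foliated---immediate from (\ref{folvf}) and $\gamma(\partial/\partial x^i,\partial/\partial\tilde{x}_j)=\delta^j_i$---so Lemma \ref{corolarLz} applies to $\partial\gamma(\mathbf{X},\mathbf{Y})$.

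For the first equality, set $J:=\sum_{Cycl}[[\mathbf{X},\mathbf{Y}]_{\nabla^0},\mathbf{Z}]_{\nabla^0}$, $S:=\sum_{Cycl}\partial\gamma([\mathbf{X},\mathbf{Y}]_{\nabla^0},\mathbf{Z})$, and $K:=\sum_{Cycl}(\mathbf{X}\bigstar_{\nabla^0}\mathbf{Y})\bigstar_{\nabla^0}\mathbf{Z}$. Expanding each double bracket twice via (\ref{cr*}) and killing the middle term $\partial\gamma(\mathbf{X},\mathbf{Y})\bigstar_{\nabla^0}\mathbf{Z}$ by Lemma \ref{corolarLz} yields the first relation $K=J+S$. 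A second, independent expression for $K$ comes from applying Jacobi term by term, $(\mathbf{X}\bigstar\mathbf{Y})\bigstar\mathbf{Z}=\mathbf{X}\bigstar(\mathbf{Y}\bigstar\mathbf{Z})-\mathbf{Y}\bigstar(\mathbf{X}\bigstar\mathbf{Z})$. Summing cyclically and using the normalization $A\bigstar B+B\bigstar A=2\partial\gamma(A,B)$ together with Lemma \ref{corolarLz} rewrites $\sum_{Cycl}\mathbf{Y}\bigstar(\mathbf{X}\bigstar\mathbf{Z})$ as $-\sum_{Cycl}\mathbf{X}\bigstar(\mathbf{Y}\bigstar\mathbf{Z})+2P$, where $P:=\sum_{Cycl}\partial(\mathbf{X}\gamma(\mathbf{Y},\mathbf{Z}))$; re-expanding the inner product via (\ref{cr*}) and a further application of Lemma \ref{corolarLz} gives $\sum_{Cycl}\mathbf{X}\bigstar(\mathbf{Y}\bigstar\mathbf{Z})=-J+S+P$. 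Combining the two expressions produces $K=-2J+2S$, and together with $K=J+S$ this forces $3J=S$, which is the first equality.

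For the second equality, I substitute $[\mathbf{X},\mathbf{Y}]_{\nabla^0}=[\mathbf{X},\mathbf{Y}]-\mathbf{X}\wedge_{\nabla^0}\mathbf{Y}$ from (\ref{Ccroset}) and compute $\sum_{Cycl}\gamma(\mathbf{X}\wedge_{\nabla^0}\mathbf{Y},\mathbf{Z})$ directly from the defining formula (\ref{wedgenabla}); the six resulting terms pair up under the torsion-freeness of $\nabla^0$ into three contributions of the form $\gamma(\cdot,[\cdot,\cdot])$, giving $\sum_{Cycl}\gamma(\mathbf{X}\wedge_{\nabla^0}\mathbf{Y},\mathbf{Z})=-\tfrac12\sum_{Cycl}\gamma([\mathbf{X},\mathbf{Y}],\mathbf{Z})$, whence $\tfrac13 S=\tfrac12\sum_{Cycl}\partial\gamma([\mathbf{X},\mathbf{Y}],\mathbf{Z})$. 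The third equality then follows at once from the skew-symmetry $\mathbf{Y}\wedge_{\nabla^0}\mathbf{X}=-\mathbf{X}\wedge_{\nabla^0}\mathbf{Y}$ combined with the formula just proved. I expect the first equality to be the main obstacle: several families of $\partial\gamma$-type corrections must be tracked through two uses of (\ref{cr*}) and one of Jacobi, and the argument closes only because Lemma \ref{corolarLz} kills the obstructing products $\partial\gamma(\cdot,\cdot)\bigstar_{\nabla^0}\mathbf{Z}$ and because the same exact correction $P$ appears in both derived expressions for $K$, so it cancels in the final linear combination.
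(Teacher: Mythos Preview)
Your argument is correct. The overall route is the same as the paper's---derive the bracket Jacobiator from the Leibniz identity of Proposition~\ref{propLz} via the dictionary (\ref{cr*}), then handle the remaining two equalities with (\ref{Ccroset}) and the cyclic identity (\ref{equax}) (which is exactly your computation $\sum_{Cycl}\gamma(\mathbf{X}\wedge_{\nabla^0}\mathbf{Y},\mathbf{Z})=-\tfrac12\sum_{Cycl}\gamma([\mathbf{X},\mathbf{Y}],\mathbf{Z})$). The execution differs slightly: the paper expands (\ref{Leibnizrule}) directly through (\ref{cr*}) and simplifies using the metric axiom (\ref{axvCalg}) to obtain a \emph{non-cycled} identity, which upon cycling gives $3J=S$; you instead work entirely in cyclic sums and obtain $3J=S$ from two independent expressions for $K=\sum_{Cycl}(\mathbf{X}\bigstar_{\nabla^0}\mathbf{Y})\bigstar_{\nabla^0}\mathbf{Z}$, using Lemma~\ref{corolarLz} in place of (\ref{axvCalg}). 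Your organization is arguably cleaner for this corollary, while the paper's intermediate non-cycled identity is a slightly sharper byproduct; both are legitimate implementations of the same idea.
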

\begin{proof} In (\ref{Leibnizrule}), express the products by brackets using the relation (\ref{cr*}). Then, compute and make reductions using the property (\ref{ax1'}). The result is
$$\begin{array}{c}\sum_{Cycl(\mathbf{X},\mathbf{Y},\mathbf{Z})} [[\mathbf{X},\mathbf{Y}]_{\nabla^0}, \mathbf{Z}]_{\nabla^0}= \partial(\gamma([\mathbf{X},\mathbf{Y}]_{\nabla^0},\mathbf{Z}))
+\frac{1}{2}\partial( \mathbf{Y}(\gamma(\mathbf{X},\mathbf{Z})))\vspace*{2mm}\\
-\frac{1}{2}\partial( \mathbf{X}(\gamma(\mathbf{Y},\mathbf{Z}))).\end{array}$$
If we add to the previous equality its two cyclic permutations, the result is the first equality in (\ref{Jptcr}). The two other equalities follow from (\ref{Ccroset}) and the remark that
\begin{equation}\label{equax}\sum_{Cycl(\mathbf{X},\mathbf{Y},\mathbf{Z})} \gamma([\mathbf{X},\mathbf{Y}],\mathbf{Z})=
2\sum_{Cycl(\mathbf{X},\mathbf{Y},\mathbf{Z})} \gamma(\mathbf{Y}\wedge_{\nabla^0}\mathbf{X},\mathbf{Z}).\end{equation} \end{proof}
\begin{corol}\label{algCfol} The triple $(TM,\gamma,pr_L)$ is an $\tilde{L}$-transversal Courant algebroid on $M$.\end{corol}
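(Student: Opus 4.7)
The plan is to interpret the statement as saying that the defining properties of a Courant algebroid, taken with bracket $[\,,\,]_{\nabla^0}$ and anchor $\rho=pr_L$, hold when restricted to the subspace $\chi_{{\rm sf}}(M)$ of strongly foliated vector fields — equivalently, that the structure descends to the local leaf space of $\tilde{L}$ — and then to verify each axiom by assembling results already established. Closure of $\chi_{{\rm sf}}(M)$ under the bracket, under the $\gamma$-pairing (which lands in $\tilde{L}$-foliated functions), and under application of $\partial$ to such functions (which, by (\ref{partial3}), lands in $\tilde{L}\cap\chi_{{\rm sf}}(M)$) is guaranteed by Lemma \ref{lemaLz} together with (\ref{partial3}), so that all operations involved are well defined on $\chi_{{\rm sf}}(M)$.

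Concretely, I would check six axioms in turn. Skew-symmetry of $[\,,\,]_{\nabla^0}$ is built into Proposition \ref{algcucr}; the identity $pr_L\circ\partial=0$ is immediate from $\partial f=\frac{1}{2}(0,df)\in\tilde{L}$; the anchor-bracket morphism $pr_L[\mathbf{X},\mathbf{Y}]_{\nabla^0}=[pr_L\mathbf{X},pr_L\mathbf{Y}]$ is exactly (\ref{Liesf}); and the Jacobi identity with exact Jacobiator is precisely (\ref{Jptcr}) in Corollary \ref{corolJac}. The two remaining axioms, namely the Leibniz rule (\ref{lincroset}) and the metric compatibility identity (\ref{ax1'}), already hold for the metric algebroid $(TM,\gamma,Id,[\,,\,]_{\nabla^0})$ with anchor $Id$, so it remains only to show that $Id$ may be replaced by $pr_L$ once the arguments are restricted to $\chi_{{\rm sf}}(M)$ and, in (\ref{lincroset}), the multiplier $f$ is taken to be $\tilde{L}$-foliated (the appropriate notion of $C^\infty$-linearity in the transversal setting).

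This last replacement is the only substantive step. The key point is that whenever $\mathbf{X},\mathbf{Y}\in\chi_{{\rm sf}}(M)$, the function $\gamma(\mathbf{X},\mathbf{Y})$ is $\tilde{L}$-foliated, and for any $\tilde{L}$-foliated function $h$ and any $\mathbf{Z}\in TM$ one has $\mathbf{Z}h=(pr_L\mathbf{Z})h$, because $pr_{\tilde{L}}\mathbf{Z}$ annihilates functions that are constant along $\tilde{L}$. Applying this observation to the left-hand sides of (\ref{ax1'}) and (\ref{lincroset}) converts the anchor $Id$ into $pr_L$ at no cost. I expect the main obstacle to be conceptual rather than computational: making the notion of ``$\tilde{L}$-transversal Courant algebroid'' precise and confirming that the closure properties of $\chi_{{\rm sf}}(M)$ suffice to give it meaning. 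Once that framework is fixed, the proof is a catalogue of Section 2 together with Lemma \ref{lemaLz}, Proposition \ref{propLz} and Corollary \ref{corolJac}, with no new calculation required.
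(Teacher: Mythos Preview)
Your proposal is correct and follows essentially the same route as the paper's own proof: both arguments assemble the axioms of a transversal Courant algebroid from the ingredients already established, namely (\ref{ax1'}), (\ref{partial3}), (\ref{Liesf}) and Corollary \ref{corolJac}, together with the closure of $\chi_{{\rm sf}}(M)$ coming from Lemma \ref{lemaLz} and (\ref{Ccroset}). Your explicit treatment of the replacement $Id\to pr_L$ via the observation $\mathbf{Z}h=(pr_L\mathbf{Z})h$ for $\tilde{L}$-foliated $h$ makes transparent a step the paper leaves implicit, but the two proofs are otherwise the same.
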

\begin{proof} The notion of a transversal Courant algebroid with respect to a foliation was defined in \cite{VMedJ}. The definition is the same as that of a Courant algebroid, but, everything is required to be foliated and the anchor goes to the transversal bundle of the foliation. The corollary follows from the formulas (\ref{ax1'}), (\ref{Ccroset}), (\ref{partial3}), (\ref{Liesf}) and Corollary \ref{corolJac}.
\end{proof}
\begin{rem}\label{obsBuc} {\rm By the extension theorem proven in \cite{VBuc}, the $\tilde{L}$-transversal Courant algebroid structure of $TM$ extends to a usual Courant algebroid structure on the vector bundle $L\oplus(ann\,L)\oplus TM\approx TM\oplus TM$, where the isomorphism is defined by sending $\lambda\in ann\,L$ to $\sharp_\gamma\lambda\in L$. The metric of the extension is $\gamma\oplus\gamma$, the anchor is
$\rho( \mathbf{X},\mathbf{Y})=pr_{\tilde{L}} \mathbf{X}+pr_L\mathbf{Y}$ and the brackets of the elements of the bases produced by distinguished coordinates are zero.}\end{rem}

Property a) of Proposition \ref{nonflin} suggests that the operation $\bigstar_{\nabla^0}$ may be used to define a {\it generalized Lie derivative} ({\it gauge transformation} e.g.,\cite{HHZ}), which we shall denote by $\mathfrak{L}_{ \mathbf{X}}$ where $ \mathbf{X}$ is any vector field on $M$. This generalized derivative acts on tensor fields as follows
\begin{equation}\label{genL} \begin{array}{l} \mathfrak{L}_{ \mathbf{X}}f= \mathbf{X}(f),\;
\mathfrak{L}_{ \mathbf{X}} \mathbf{Y}=\mathbf{X}\bigstar_{\nabla^0}\mathbf{Y},\;
(\mathfrak{L}_{ \mathbf{X}}\alpha)( \mathbf{Y})=\mathbf{X}(\alpha( \mathbf{Y}))
-\alpha( \mathfrak{L}_{\mathbf{X}} \mathbf{Y})
\vspace*{2mm}\\ (\mathfrak{L}_{ \mathbf{X}} \mathbf{T})( \mathbf{Y}_1,...,
\mathbf{Y}_p,\alpha_1,...,\alpha_q)=\mathbf{X}(\mathbf{T}( \mathbf{Y}_1,...,
\mathbf{Y}_p,\alpha_1,...,\alpha_q))
\vspace*{2mm}\\ -\sum_{i=1}^p\mathbf{T}( \mathbf{Y}_1,...,
\mathfrak{L}_{ \mathbf{X}}\mathbf{Y}_i,...,
\mathbf{Y}_p,\alpha_1,...,\alpha_q)
\vspace*{2mm}\\
-\sum_{i=1}^q\mathbf{T}( \mathbf{Y}_1,...,
\mathbf{Y}_p,\alpha_1,...,
\mathfrak{L}_{ \mathbf{X}}\alpha_i,...,\alpha_q).\end{array}\end{equation}
The result is a tensor field; it is easy to check $C^\infty(M)$-linearity with respect to all the arguments.
The last formula (\ref{genL}) is the extension of the operator $\mathfrak{L}_{ \mathbf{X}}$ given by the first three formulas to tensors, under the requirement that the extension acts on tensor products (therefore, also on exterior products) as a derivation.

The identity (\ref{Leibnizrule}) is equivalent to \begin{equation}\label{comutLiegen} \mathfrak{L}_{ \mathbf{X}}\mathfrak{L}_{ \mathbf{Y}}-\mathfrak{L}_{ \mathbf{Y}}\mathfrak{L}_{ \mathbf{X}}
=\mathfrak{L}_{ \mathbf{X}\bigstar_{\nabla^0}\mathbf{Y}},\end{equation}
where $ \mathbf{X},\mathbf{Y}$ are strongly foliated vector fields and the operators act on the space of strongly foliated vector fields. Then, if we define a strongly foliated tensor field $\mathbf{T}$ by requiring that its components with respect to distinguished coordinates be $\tilde{L}$-foliated functions, formulas (\ref{genL}) allow us to see that the condition (\ref{comutLiegen}) holds for the action on strongly foliated tensor fields.

We also notice the following local formulas obtained from (\ref{genL}), where $\mathbf{X}$ is the strongly foliated vector field given by (\ref{folvf}):
\begin{equation}\label{genLlocal} \begin{array}{l}
\mathfrak{L}_{ \mathbf{X}}(\frac{\partial}{\partial x^k})
=(\frac{\partial\tilde{\xi}_k}{\partial x^i}-
\frac{\partial\tilde{\xi}_i}{\partial x^k})\frac{\partial}{\partial \tilde{x}_i}- \frac{\partial\xi^i}{\partial x^k}\frac{\partial}{\partial x^i},\, \mathfrak{L}_{ \mathbf{X}}(\frac{\partial}{\partial \tilde{x}_k})=\frac{\partial\xi^k}{\partial x^i}\frac{\partial}{\partial \tilde{x}_i},
\vspace*{2mm}\\ \mathfrak{L}_{ \mathbf{X}}(dx^j)= \frac{\partial\xi^j}{\partial x^k}dx^k,\, \mathfrak{L}_{ \mathbf{X}}(d\tilde{x}_j)=
(\frac{\partial\tilde{\xi}_k}{\partial x^j}-
\frac{\partial\tilde{\xi}_j}{\partial x^k})dx^k- \frac{\partial\xi^k}{\partial x^j}d\tilde{x}^k.\end{array} \end{equation}
\begin{example}\label{Liegamma} {\rm
The metric $\gamma$, which by (\ref{defgamma}) is a strongly foliated tensor field, satisfies the condition $\mathfrak{L}_{ \mathbf{X}}\gamma=0$ for any vector field $ \mathbf{X}$. This is nothing but the metric axiom for the algebroid $(TM,Id,\gamma)$ written for strongly foliated arguments.}\end{example}
\begin{example}\label{LieT} {\rm If $f$ is an $\tilde{L}$-foliated function , then, $ \mathfrak{L}_{\partial f} \mathbf{T}=0$ for any strongly foliated tensor field $ \mathbf{T}$. It is a consequence of  (\ref{dfX}) and (\ref{genL}) that $ \mathfrak{L}_{\partial f} \mathbf{T}$ vanishes on strongly foliated arguments (check first for $\mathbf{T}=f,\mathbf{X},\alpha$ and then for a general $\mathbf{T}$). This suffices to justify the conclusion since it allows us to conclude that $(\mathfrak{L}_{\partial f} \mathbf{T})_x=0$ at any point $x\in M$ by extending the values of the arguments at $x$ to strongly foliated fields.}\end{example}
\section{Fields and generalized metrics}
Let $(M,L,\tilde{L})$ be a flat, bi-Lagrangian manifold. In double field theory, the initial geometric objects that define a field are $(g,B)$ where $g$ is a non degenerate metric of $L$, with positive-negative inertia indices $p,q$, $p+q=m$, and $B\in\Gamma\wedge^2L^*$ is a $2$-form. Furthermore, a dilation scalar $\phi\in C^\infty(M)$ is also required.

Since $g,B,\phi$ are geometric objects we may use arbitrary coordinates $(y^u)$ $(u=1,...,2m)$. But, physics asks for the existence of coordinates where the field only depends on the coordinates along the leaves of $L$. This is called the level matching constraint and it is equivalent with the fact that the components of $g,B$ with respect to distinguished coordinates depend only on the coordinates $x^i$. Thus, level matching constraint means that $g,B,\phi$ are foliated objects with respect to the foliation $\tilde{L}$. In invariant terms, the level matching constraint has the following expression:
$$
i(Z)\bar{g}=0,\,\mathcal{L}_Z\bar{g}=0,\,i(Z)\bar{B}=0,
\,\mathcal{L}_Z\bar{B}=0,\,Z\phi=0,\,
\forall Z\in\Gamma \tilde{L},$$ where $ \mathcal{L}$ is Lie derivative and $\bar{g},\bar{B}$ are the extensions of $g,B$ to tensor fields on $M$ that vanish if any of the arguments is in $\tilde{L}$. In other words, $\bar{g},\bar{B}$ are strongly foliated tensor fields; we may say that, generally, any strongly foliated tensor field satisfies the level matching constraint. We do not assume, a priori, that the level matching constraint holds and we will postulate this condition explicitly when used.

The pair $(g,B)$ turns out to be equivalent with a metric $\mathcal{H}$ on $TM$ that is compatible with the neutral metric $\gamma$, in the sense that it further reduces the structure group of $TM$ from $O(m,m)$ to $O(p,q)\times O(q,p)$. Such metrics will be called (improperly but briefly) {\it generalized metrics}. The metric  $\mathcal{H}$ appears in the physics literature \cite{HK}. More exactly, we have the following result.
\begin{prop}\label{thequival} There exists a bijective correspondence between the fields $(g,B)$ and the generalized metrics $\mathcal{H}$ that have a non degenerate restriction to the bundle $L^*$.\end{prop}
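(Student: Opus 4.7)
The plan is to reformulate a $\gamma$-compatible generalized metric as an involution of $TM$ whose $\pm 1$-eigenbundles supply the $O(p,q)\times O(q,p)$ reduction, and then identify these eigenbundles with the twisted graphs of $\pm g + B : L \to L^*$. Giving $\mathcal{H}$ is equivalent to giving a splitting $TM = C_+\oplus C_-$ with $C_\pm$ mutually $\gamma$-orthogonal and $\gamma|_{C_\pm}$ non-degenerate of signatures $(p,q)$ and $(q,p)$ respectively; the correspondence is $C_\pm = \ker(K\mp Id)$ for $K := \sharp_\gamma\circ\flat_{\mathcal{H}}$ in one direction, and $\mathcal{H}|_{C_+} = \gamma|_{C_+}$, $\mathcal{H}|_{C_-} = -\gamma|_{C_-}$, $\mathcal{H}(C_+,C_-)=0$ in the other. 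This reduces the proposition to producing such splittings from $(g,B)$ and conversely.

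For the forward map, given $(g,B)$ I would set
$$C_\pm = \{\, X + (\pm g + B)(X) : X \in L\,\}\subset L\oplus L^* = TM,$$
viewing $\pm g + B$ as morphisms $L\to L^*$. A direct calculation using (\ref{gammabyL}) yields $\gamma|_{C_\pm} = \pm 2g$ through the projection isomorphism $pr_L: C_\pm \to L$, giving the required signatures, together with $\gamma(C_+,C_-)=0$, whence $TM = C_+\oplus C_-$. To compute $\mathcal{H}|_{L^*}$, I would decompose $\alpha\in L^*$ as $\alpha_+ + \alpha_-$ with $\alpha_\pm \in C_\pm$; vanishing of the $L$-component forces $X_- = -X_+$ and matching the $L^*$-component gives $X_+ = \tfrac12\sharp_g\alpha$. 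Substituting back into $\mathcal{H}(\alpha,\beta) = \gamma(\alpha_+,\beta_+) - \gamma(\alpha_-,\beta_-)$ yields $\mathcal{H}|_{L^*} = g^{-1}$, non-degenerate as required.

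For the reverse map, I would start from $\mathcal{H}$ with non-degenerate restriction to $L^*$, form $K$ and its eigenbundles $C_\pm$, and note that any $\beta\in C_\pm\cap L^*$ satisfies $\mathcal{H}(\beta,\alpha) = \pm\gamma(\beta,\alpha) = 0$ for every $\alpha\in L^*$, forcing $\beta = 0$. Hence $pr_L : C_\pm\to L$ are isomorphisms and each $C_\pm$ is the graph of a morphism $A_\pm : L\to L^*$. Decomposing $A_\pm = S_\pm + T_\pm$ into its symmetric part $S_\pm \in \Gamma\odot^2 L^*$ and antisymmetric part $T_\pm\in\Gamma\wedge^2 L^*$, and expanding $\gamma(C_+,C_-)=0$ in these terms, yields $S_+ + S_- = 0$ and $T_+ = T_-$; the signature hypothesis on $\gamma|_{C_+}$ then forces $S_+$ to be non-degenerate of signature $(p,q)$. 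I would set $g := S_+$ and $B := T_+$, and the two constructions are mutually inverse by direct inspection.

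The main obstacle is the faithful translation of the two hypotheses into the linear-algebraic statements on the splitting $TM = C_+\oplus C_-$; in particular, the observation that non-degeneracy of $\mathcal{H}|_{L^*}$ is precisely what makes $C_\pm$ transverse to $L^*$ and therefore realisable as graphs over $L$ is the crucial step. Once this transversality is in place, extracting $(g,B)$ from the symmetric/antisymmetric decomposition of $A_\pm$ and checking the remaining identities is elementary.
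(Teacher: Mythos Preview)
Your proposal is correct and follows essentially the same route as the paper: both directions identify $\mathcal{H}$ with the involution $K=\Phi$ (your $K=\sharp_\gamma\flat_{\mathcal{H}}$ is the inverse of the paper's $\Phi=\sharp_{\mathcal{H}}\flat_\gamma$, hence equal to it since $\Phi^2=Id$) whose $\pm1$-eigenbundles are the graphs of $\flat_{B\pm g}:L\to L^*$, and both use the non-degeneracy of $\mathcal{H}|_{L^*}$ at exactly the point where one needs the eigenbundles to be transverse to $L^*$. The only cosmetic difference is in the converse: the paper reads off $g$ and $B$ from the $2\times2$ block-matrix representation of $\Phi$ relative to $L\oplus L^*$ (setting $\flat_B=-\flat_g\circ\psi$ and then verifying the eigenbundle equation), whereas you extract them directly as the symmetric and antisymmetric parts of the graph maps $A_\pm$; the underlying linear algebra is identical.
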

\begin{proof} We start with $(g,B)$ and define a corresponding $\mathcal{H}$ following \cite{G}.
The formulas
\begin{equation}\label{injections} X\mapsto(X,\flat_{B+g}X),\,
X\mapsto(X,\flat_{B-g}X),\hspace{2mm}X\in L\end{equation} (where
$\flat_g,\flat_B:L\rightarrow L^*$ are defined as in classical Riemannian geometry and the elements of $L^*$ are
identified with $1$-forms on $M$ that vanish on $\tilde{L}$) define injections
$\iota_\pm:L\rightarrow L\oplus L^*$. Since $\flat_gX=0$
only for $X=0$, the images $S_\pm$ of $\iota_\pm$ have intersection zero and
their sum is $2m$-dimensional, i.e.,
$$ S_+\oplus S_-=L\oplus L^*.$$
It follows easily that the decomposition along $S_+$ and $S_-$ is given by
\begin{equation}\label{descSpm} \begin{array}{l}(X,\alpha)=
(X_1,\flat_{B+g}X_1)+(X_2,\flat_{B-g}X_2),\vspace*{2mm}\\
X_1=\frac{1}{2}[X-\sharp_g(\flat_BX-\alpha)],\,X_2=
\frac{1}{2}[X+\sharp_g(\flat_BX-\alpha)].\end{array}
\end{equation}

The required metric $ \mathcal{H}$ is defined on $L\oplus L^*$ by
\begin{equation}\label{defHS}
\mathcal{H}|_{S_+}=\gamma|_{S^+}=2\iota_+^{-1*}g,\,\mathcal{H}|_{S_-}=-\gamma|_{S^-}
=2\iota_-^{-1*}g,\,S_+\perp_\mathcal{H}S_-\end{equation} where the star denotes the metrics induced by $g$ in $S_\pm$.

Furthermore, by our notation convention, $S_\pm$ may also be seen as subbundles of $TM$ and $\mathcal{H}$ may also be seen as a metric on $TM$, using the transfer by $\tilde{\varphi}$ defined in Section 1. The corresponding reduction of the structure group of $TM$ characterizes a generalized metric.

Using (\ref{descSpm}), we get the following general expression of $\mathcal{H}$ on $L\oplus L^*$
\begin{equation}\label{Hdesc} \mathcal{H}((X,\alpha),(Y,\beta))
=g(X,Y)+g^{-1}(\flat_BX-\alpha,\flat_BY-\beta),
\end{equation} where $X,Y\in L,\alpha,\beta\in ann\,\tilde{L}$. Using (\ref{Hdesc}) and transferring to $TM$, we obtain
\begin{equation}\label{compH} \begin{array}{c}
\mathcal{H}(\frac{\partial}{\partial x^i},\frac{\partial}{\partial x^j})
=g_{ij}-B_{ik}g^{kl}B_{lj},\vspace*{2mm}\\ \mathcal{H}(\frac{\partial}{\partial x^i},\frac{\partial}{\partial \tilde{x}_j})=
g^{jk}B_{ki},\, \mathcal{H}(\frac{\partial}{\partial \tilde{x}_i},
\frac{\partial}{\partial \tilde{x}_j})=g^{ij},\end{array}\end{equation}
where $(x^i,\tilde{x}_j)$ are distinguished, local coordinates and $B=(1/2)B_{ij}dx^i\wedge dx^j$.

A comparison with the physics literature (e.g., \cite{HHZ}) shows that $\mathcal{H}$ is the metric defined by physicists. From the last formula (\ref{compH}) we see that the restriction of $\mathcal{H}$ to $L^*$ is non degenerate.

We will also need the isomorphism $\Phi=\sharp_\mathcal{H}\circ\flat_\gamma$ of $L\oplus L^*$,
which may be defined equivalently on $TM$ by
\begin{equation}\label{Hgamma} \mathcal{H}(\Phi Z,U)=\gamma(Z,U),\hspace{3mm}
Z,U\in TM.\end{equation}
From (\ref{defHS}), we get $\flat_\mathcal{H}|_{S_+}=\flat_\gamma|_{S_+},
\flat_\mathcal{H}|_{S_-}=-\flat_\gamma|_{S_-}$, and the definition
of $\Phi$ yields $\Phi|_{S_+}=Id,\Phi|_{S_-}=-Id$. Thus, $\Phi^2=Id$
and $\Phi$ is an almost product, $\mathcal{H}$-orthogonal structure on $L\oplus L^*$, equivalently, on $TM$\footnote{The
projections of $TM$ onto the $\pm1$-eigenbundles of $\Phi$, i.e., the morphisms $(1/2)(Id\pm\Phi)$ are the projections used in \cite{{JLP},{JLP1}}.}. From (\ref{Hgamma}) we also get
\begin{equation}\label{gammaH} \mathcal{H}(Z,U)=\gamma(\Phi Z,U).\end{equation}

Now, we can show the converse. If we have a generalized, $L^*$-non degenerate metric $ \mathcal{H}$ of $M$, the reduction of the structure group of the bundle of the canonical frames of $\gamma$ from $O(m,m)$ to $O(p,q)\times O(q,p)$ produces a decomposition $L\oplus L^*=S_+\oplus S_-$ such that
$$
\mathcal{H}|_{S_+}=\gamma|_{S^+},\,\mathcal{H}|_{S_-}=-\gamma|_{S^-},
\,S_+\perp_\mathcal{H}S_-,\,S_+\perp_\gamma S_-.$$
This implies that $\Phi=\sharp_\mathcal{H}\circ\flat_\gamma$ has $S_\pm$ as its $\pm1$-eigenbundles. Thus, we see that $\Phi$ is an almost product, $\mathcal{H}$-orthogonal and $\gamma$-orthogonal structure on $L\oplus L^*$ satisfying (\ref{Hgamma}), (\ref{gammaH}), which also implies the compatibility relations
$$ \gamma(\Phi Z,\Phi U)=\gamma(Z,U),\;
\mathcal{H}(\Phi Z,\Phi U)=\mathcal{H}(Z,U).$$

Then, we shall continue as follows \cite{V4}. Using the compatibility conditions above, it follows that $\Phi$ has a matrix representation
\begin{equation}\label{matriceaPhi} \Phi\left(\begin{array}{c}
X\vspace{2mm}\\ \alpha\end{array}\right)=
\left(\begin{array}{cc}\psi&\sharp_g\vspace*{2mm}\\
\flat_{\tilde{g}}&^t\hspace{-1pt}\psi\end{array}\right)
\left(\begin{array}{c} X\vspace{2mm}\\ \alpha\end{array}\right),
\end{equation} where $\psi\in End(L)$, $\tilde{g}$ is a symmetric $2$-covariant tensor on $L$ and $g$ is a symmetric $2$-contravariant tensor on $L$, which are defined by
$$\begin{array}{c}
\mathcal{H}((X,0),(0,\beta))=\beta(\psi X),\,
\mathcal{H}((X,0),(Y,0))=\tilde{g}(X,Y),\vspace*{2mm}\\
\mathcal{H}((0,\alpha),(0,\beta))=g(\alpha,\beta),\end{array}$$
with $X,Y\in L,\alpha,\beta\in L^*$. In particular, since $\mathcal{H}$ was assumed non degenerate on $L^*$, $g$ is non degenerate and we may follow the Riemannian convention of denoting the inverse of $\sharp_g$ by $\flat_g$. Finally, the product condition $\Phi^2=Id$ is equivalent to
\begin{equation}\label{Phiprodus} \begin{array}{c}
\psi^2=Id-\sharp_g\circ\flat_{\tilde{g}},\;
\psi\circ\sharp_g+\sharp_g\circ^t\hspace{-1pt}\psi=0,\,
\flat_{\tilde{g}}\circ\psi+^t\hspace{-1pt}\psi\circ\flat_{\tilde{g}}.
\end{array}\end{equation}

Thus, the representation (\ref{matriceaPhi}) yields a metric $g$ on $L$. It also yields a $2$-form $B$ defined by
\begin{equation}\label{B} \flat_B=-\flat_g\circ\psi
\end{equation} (the skew symmetry of $B$ is a consequence of (\ref{Phiprodus})).

Furthermore, with $(g,B)$, we can define the injections $\iota_\pm:L\rightarrow L\oplus L^*$ of (\ref{injections}). Using (\ref{Phiprodus}), (\ref{matriceaPhi}) and (\ref{B}) a straightforward calculation gives
$$\Phi(X,\flat_{B\pm g}X)=\pm(X,\flat_{B\pm g}X),$$
which proves that the metric $\mathcal{H}$ is exactly the metric associated to the pair $(g,B)$ in the first part of the proof.
\end{proof}

We end this section with a few more remarks. A vector field $ \mathbf{X}$ on $M$ will be called a {\it generalized-Killing vector field} if $ \mathfrak{L}_{ \mathbf{X}} \mathcal{H}=0$.
\begin{prop}\label{Killing} If the field $(g,B)$ satisfies the level matching constraint, the strongly foliated vector field $ \mathbf{X}$ is a generalized-Killing vector field iff
$$ \mathcal{L}_{pr_L\mathbf{X}}g=0,\,
\mathcal{L}_{pr_L\mathbf{X}}B=0,$$ where $ \mathcal{L}$ is the Lie derivative of the Lie algebroid $L$.\end{prop}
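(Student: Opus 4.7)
The plan is to verify the claim by a direct computation in distinguished coordinates $(x^i,\tilde{x}_j)$. Write $\mathbf{X}=\xi^i(x)\,\partial/\partial x^i+\tilde{\xi}_i(x)\,\partial/\partial\tilde{x}_i$, so that $X:=pr_L\mathbf{X}=\xi^i\,\partial/\partial x^i$. The level matching hypothesis makes every component of $\mathcal{H}$ in (\ref{compH}) a $\tilde{L}$-foliated function, hence $\mathcal{H}$ is a strongly foliated $(0,2)$-tensor and, by the remarks following (\ref{comutLiegen}), so is $\mathfrak{L}_{\mathbf{X}}\mathcal{H}$. Therefore $\mathfrak{L}_{\mathbf{X}}\mathcal{H}=0$ is equivalent to its vanishing on every pair of coordinate basis vectors, and the problem splits into the three blocks $(\partial/\partial x^a,\partial/\partial x^b)$, $(\partial/\partial x^a,\partial/\partial\tilde{x}_b)$, $(\partial/\partial\tilde{x}_a,\partial/\partial\tilde{x}_b)$.

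For each block I would expand the derivation formula (\ref{genL}) and substitute the explicit local expressions (\ref{genLlocal}) for $\mathfrak{L}_{\mathbf{X}}(\partial/\partial x^k)$ and $\mathfrak{L}_{\mathbf{X}}(\partial/\partial\tilde{x}_k)$. The block $(\partial/\partial\tilde{x}_a,\partial/\partial\tilde{x}_b)$ is the cleanest: only the $\xi$-dependent piece of (\ref{genLlocal}) contributes there, and the vanishing condition immediately reads $(\mathcal{L}_Xg^{-1})^{ab}=0$, equivalent to $\mathcal{L}_Xg=0$. Assuming this and using $\mathcal{H}_a{}^b=g^{bk}B_{ka}$, the mixed block rearranges, after an index contraction with $g$, into the condition $\mathcal{L}_XB=0$. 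Finally, the $(\partial/\partial x^a,\partial/\partial x^b)$-block is shown to be a consequence of the other two by differentiating the identity $\tilde{g}=g-Bg^{-1}B$ along $X$ and invoking $\mathcal{L}_Xg=0$ and $\mathcal{L}_XB=0$; this gives both implications.

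The main obstacle is the bookkeeping of the $\tilde{\xi}$-dependent terms appearing in $\mathfrak{L}_{\mathbf{X}}(\partial/\partial x^k)=(\partial\tilde{\xi}_k/\partial x^i-\partial\tilde{\xi}_i/\partial x^k)\partial/\partial\tilde{x}_i-(\partial\xi^i/\partial x^k)\partial/\partial x^i$. These terms, which are a priori not of Lie-derivative form, appear in both the mixed block (paired against $g^{-1}$) and the purely $L$-block (paired against $h$), and one must check that they enter in the same antisymmetric combination that is matched by the contributions coming from differentiating $\tilde{g}=g-Bg^{-1}B$ along $X$. The level matching hypothesis is what allows this reduction at all, because it kills every $\tilde{x}$-derivative of a component of $\mathcal{H}$ and converts the derivation identities into genuine Lie derivatives of $g$ and $B$ on the leaves of $\tilde{L}$.
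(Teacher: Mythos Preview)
Your route is genuinely different from the paper's. The paper does not evaluate $\mathfrak{L}_{\mathbf{X}}\mathcal{H}$ block by block; it first uses $\mathfrak{L}_{\mathbf{X}}\gamma=0$ (Example~\ref{Liegamma}) to replace $\mathfrak{L}_{\mathbf{X}}\mathcal{H}=0$ by $\mathfrak{L}_{\mathbf{X}}\Phi=0$, i.e.\ by the invariance $\mathfrak{L}_{\mathbf{X}}S_\pm\subseteq S_\pm$, and then rewrites this as the vanishing of $\mathfrak{L}_{\mathbf{X}}(\bar{B}\pm\bar{g})$. The advantage of that detour is that $\bar{g}$ and $\bar{B}$ are zero as soon as one argument lies in $\tilde{L}$; since the $\tilde{\xi}$-dependent piece of $\mathfrak{L}_{\mathbf{X}}(\partial/\partial x^k)$ in (\ref{genLlocal}) lands entirely in $\tilde{L}$, it is annihilated when one computes $\mathfrak{L}_{\mathbf{X}}\bar{g}$ and $\mathfrak{L}_{\mathbf{X}}\bar{B}$, and only the $L$-Lie derivatives $\mathcal{L}_{X}g$, $\mathcal{L}_{X}B$ survive.

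Your direct computation of $\mathfrak{L}_{\mathbf{X}}\mathcal{H}$ does \emph{not} enjoy that cancellation, and this is a real gap in the plan. If you actually carry out the mixed block with $\mathcal{L}_{X}g=0$ already imposed from the $(\partial/\partial\tilde{x}_a,\partial/\partial\tilde{x}_b)$ block, you obtain
\[
(\mathfrak{L}_{\mathbf{X}}\mathcal{H})\Bigl(\frac{\partial}{\partial x^a},\frac{\partial}{\partial\tilde{x}_b}\Bigr)
= g^{bk}\Bigl[(\mathcal{L}_{X}B)_{ka}-\bigl(\partial_a\tilde{\xi}_k-\partial_k\tilde{\xi}_a\bigr)\Bigr],
\]
so the mixed block forces $\mathcal{L}_{X}B=d_L\bigl(\flat_\gamma\,pr_{\tilde{L}}\mathbf{X}\bigr)$, not $\mathcal{L}_{X}B=0$. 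The antisymmetric $\tilde{\xi}$-combination you single out as the ``main obstacle'' is not matched by anything coming from differentiating $\tilde{g}=g-Bg^{-1}B$: the $(x,x)$ block produces the \emph{same} extra term (now contracted with $g^{-1}B$), consistently with the mixed block but never cancelling it. Thus the three-block scheme, as written, does not close onto $\mathcal{L}_{X}B=0$. To reach the statement as formulated you must pass through the $S_\pm$/$\bar{B},\bar{g}$ reformulation the paper uses, where $\bar g|_{\tilde L}=\bar B|_{\tilde L}=0$ kills the $\tilde\xi$-piece; note, however, that the paper's own passage from ``$\mathfrak{L}_{\mathbf{X}}S_\pm\subseteq S_\pm$'' to ``$\mathfrak{L}_{\mathbf{X}}(\bar{B}\pm\bar{g})=0$'' silently drops the very same $\tilde L$-component $(\mathfrak{L}_{\mathbf{X}}Y)_{\tilde L}$, so your computation is in fact exposing a point the paper glosses over.
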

\begin{proof} Since $ \mathfrak{L}_{ \mathbf{X}} \gamma=0$ (see Example \ref{Liegamma}), $ \mathfrak{L}_{ \mathbf{X}} \mathcal{H}=0$ iff $ \mathfrak{L}_{ \mathbf{X}} \Phi=0$, equivalently, iff $ \mathfrak{L}_{ \mathbf{X}} \mathbf{Y}_\pm\in S_\pm$, $\forall\mathbf{Y}_\pm\in S_\pm$. If we put
$$ \mathbf{Y}_\pm=Y+\sharp_\gamma\flat_{B\pm g}Y,\;\;Y\in L,$$ we see that the previous condition holds iff $ \mathfrak{L} _{ \mathbf{X}}(\bar{B}\pm \bar{g})=0$, where $\bar{B}, \bar{g}$ are the extensions of $B,g$ by zero on $ \tilde{L}$.
Since the field satisfies the level matching constraint, we have local expressions
$$g=g_{ij}(x^k)dx^i\otimes dx^j,\;\;B=\frac{1}{2}B_{ij}(x^k)dx^i\wedge dx^j$$
with respect to distinguished local coordinates. The required conclusion follows from these expressions and formulas (\ref{genLlocal}). \end{proof}
\begin{example}\label{exKil} {\rm If the field satisfies the level matching constraint, then, for any $\tilde{L}$-foliated function $f$, $\partial f$ is a generalized Killing vector field. This follows from Example \ref{LieT} since (\ref{compH}) shows that the field $(g,B)$ satisfies the level matching constraint iff the metric $\mathcal{H}$ is a strongly foliated tensor field.}\end{example}
\section{Canonical connections and curvature}
In double field theory one is interested in connections on $TM$ that
preserve the metrics $\gamma$ and $\mathcal{H}$. If we see them as connections $\nabla$
on the synonymous bundle $L\oplus L^*$ to $TM$, the metric-preservation conditions (\ref{preservg}) become
\begin{equation}\label{nablagamma} \begin{array}{l} \mathbf{Z}(\gamma((X,\alpha),(Y,\beta)))
=\gamma(\nabla_{\mathbf{Z}}(X,\alpha),(Y,\beta))
+\gamma((X,\alpha),\nabla_{\mathbf{Z}}(Y,\beta)),
\vspace*{2mm}\\
\mathbf{Z}(\mathcal{H}((X,\alpha),(Y,\beta)))
=\mathcal{H}(\nabla_{\mathbf{Z}}(X,\alpha),(Y,\beta))
+\mathcal{H}((X,\alpha),\nabla_{\mathbf{Z}}(Y,\beta)),
\end{array}\end{equation}
where $\mathbf{Z}\in TM$, $X,Y\in L$ and $\alpha,\beta\in ann\,\tilde{L}$.

Equivalently, we may replace the second condition (\ref{nablagamma}) by the commutation condition $\nabla\Phi=\Phi\nabla$. This shows that $\nabla$ preserves the subbundles $S_\pm$ and must have an expression of the form
\begin{equation}\label{nablaSpm}
\nabla_{\mathbf{Z}}(X,\flat_{B\pm g}X)=(D^\pm_{\mathbf{Z}}X,\flat_{B\pm g}D^\pm_{\mathbf{Z}}X),\end{equation}
where $D^\pm$ are connections on $L$ that preserve the metric $g$.
Hence, there exists a bijective correspondence between the $(\gamma,\mathcal{H})$-preserving connections $\nabla$ on $TM$, which we will call {\it double-metric connections} hereafter, and the pairs $D^\pm$ of $g$-preserving connections on $L$.

Below, we continue to use our notation convention and alternatively denote tangent vectors of $M$ as pairs $(X,\alpha)\in L\oplus L^*$ and as vectors $\mathbf{X}=X+\sharp_\gamma\alpha$ without further warning.
The Levi-Civita connection $\nabla^0$ of $\gamma$ is not a double-metric connection since it does not preserve the metric $\mathcal{H}$. We will look for canonical, double-metric connections $\nabla$ by imposing restrictive conditions on the Gualtieri torsion of the connection.

The expression of the Gualtieri torsion of a double-metric connection is obtained as follows. We refer to the metric algebroid $(E=L\oplus L^*,\rho=Id,g=\gamma,[\,,\,]=[\,,\,]_{\nabla^0})$. Then, formula (\ref{modtors0}) gives the modified torsion of $\nabla$:
$$ T^\nabla((X,\alpha),(Y,\beta))=
\nabla_{\mathbf{X}}(Y,\beta)-
\nabla_{\mathbf{Y}}(X,\alpha)-
[(X,\alpha),(Y,\beta)]^\nabla_{\nabla^0},$$
where
$$[(X,\alpha),(Y,\beta)]_{\nabla^0}^\nabla=[(X,\alpha),(Y,\beta)]_{\nabla^0}
+(X,\alpha)\wedge_{\nabla}(Y,\beta)$$
is the modified bracket defined by (\ref{crmodD}).

The explicit formula that defines the modified bracket is
\begin{equation}\label{crmodif} \begin{array}{c}
\gamma([(X,\alpha),(Y,\beta)]_{\nabla^0}^\nabla,(Z,\zeta))=
\gamma([(X,\alpha),(Y,\beta)]_{\nabla^0},(Z,\zeta))\vspace*{2mm}\\-
\frac{1}{2}\gamma( \nabla_\mathbf{Z} (X,\alpha),(Y,\beta))
+\frac{1}{2}\gamma( \nabla_\mathbf{Z} (Y,\beta),(X,\alpha)).
\end{array}\end{equation}

We emphasize the following expression of the modified bracket defined by a double-metric connection $\nabla$:
$$
[\mathbf{X},\mathbf{Y}]_{\nabla^0}^\nabla =
[\mathbf{X},\mathbf{Y}]_{\nabla^0}+\sharp_\gamma(\sigma^\pm(X,Y)),
$$ where  $X,Y$ are the $L$-components of $\mathbf{X},\mathbf{Y}$,
$\sigma^\pm(X,Y)=0$ if $\mathbf{X}\in S_\pm,\mathbf{Y}\in S_\mp $ and $\sigma^\pm(X,Y)$ are $1$-forms defined on $M$ by the formula
$$<\sigma^\pm(X,Y),\mathbf{Z}>=\pm( g(X,D^\pm_\mathbf{Z}Y)-g(D^\pm_\mathbf{Z}X,Y)),$$
if $\mathbf{X}\in S_\pm,\mathbf{Y}\in S_\pm $.
This result is a straightforward consequence of (\ref{crmodif}) and (\ref{defHS}). The terms $\sigma^\pm$ satisfy the properties
$$ \sigma^\pm(X,Y)=-\sigma^\pm(Y,X),\;
\sigma^\pm(X,fY)=f\sigma^\pm(X,Y)\pm g(X,Y)df.$$

Using  (\ref{crmodif}), we get the expression of the Gualtieri torsion
$$\begin{array}{l}
\mathcal{T}^{\nabla}((X,\alpha),(Y,\beta),(Z,\zeta))
= \gamma(T^\nabla((X,\alpha),(Y,\beta)),(Z,\zeta))
\vspace*{2mm}\\

=\gamma(\nabla_{\mathbf{X}}(Y,\beta)-
\nabla_{\mathbf{Y}}(X,\alpha)-
[(X,\alpha),(Y,\beta)]_{\nabla^0},(Z,\zeta))
\vspace*{2mm}\\
+\frac{1}{2}\{\gamma(\nabla_{
\mathbf{Z}}(X,\alpha),(Y,\beta)) -\gamma(\nabla_{
\mathbf{Z}}(Y,\beta),(X,\alpha))\}.\end{array}
$$

Furthermore, the decomposition $L\oplus L^*=S_+\oplus S_-$ and the total skew symmetry of $\mathcal{T}^{\nabla}$ show the existence of a decomposition of the Gualtieri torsion into four components computed on arguments that belong to $S_\pm$, which we will denote by indices $\pm$, respectively. The components $\mathcal{T}^{\nabla}( \mathbf{X}_-,\mathbf{Y}_+,\mathbf{Z}_+)$,
$\mathcal{T}^{\nabla}( \mathbf{X}_+,\mathbf{Y}_-,\mathbf{Z}_-)$
will be called the mixed torsion and the components
$\mathcal{T}^{\nabla}( \mathbf{X}_+,\\
\mathbf{Y}_+,\mathbf{Z}_+)$,
$\mathcal{T}^{\nabla}( \mathbf{X}_-,\mathbf{Y}_-,\mathbf{Z}_-)$
will be called the pure torsion.
\begin{prop}\label{propconex1} For a given field $(g,B)$, there exists a unique double-metric connection $\nabla^1$ on $M$ that has a vanishing mixed torsion and its restrictions to $S_\pm$ are determined by the Levi-Civita connection of the metric $g$ of $L$.\end{prop}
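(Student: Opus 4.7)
The plan is to exploit the bijective correspondence established just before the statement: a double-metric connection $\nabla$ on $TM$ is the same as a pair $(D^+,D^-)$ of $g$-preserving linear connections on $L$, related to $\nabla$ via (\ref{nablaSpm}). Because $TM=S_+\oplus S_-$, the differentiation direction $\mathbf{Z}$ in $D^\pm_{\mathbf{Z}}X$ splits as $\mathbf{Z}=\mathbf{Z}_++\mathbf{Z}_-$, so each $D^\pm$ breaks into two blocks $D^{\pm,+}$ (direction in $S_+$) and $D^{\pm,-}$ (direction in $S_-$). Producing $\nabla^1$ thus amounts to fixing the four blocks $D^{+,+},D^{-,-},D^{+,-},D^{-,+}$ in a $g$-preserving, compatible way.

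The two diagonal blocks $D^{+,+}$ and $D^{-,-}$ will be determined by the Levi-Civita condition: transport the Levi-Civita connection $\nabla^{LC}$ of $g$ on the Lie algebroid $L$ along $\iota_\pm$ of (\ref{injections}) to $S_\pm$, setting $D^{+,+}_{\mathbf{Z}_+}X:=\nabla^{LC}_{\iota_+^{-1}\mathbf{Z}_+}X$ and similarly for $D^{-,-}$. These are automatically $g$-preserving, and uniqueness is the classical Koszul theorem applied to $(L,\mathcal L,g)$. The two cross blocks $D^{+,-}$ and $D^{-,+}$ are then pinned down by the two mixed Gualtieri torsion equations. Substituting (\ref{nablaSpm}) into the explicit formula for $\mathcal{T}^\nabla$ displayed just before the statement, and using (\ref{defHS}) together with the $C$-bracket expression (\ref{CcuLWZ}), the identity $\mathcal{T}^{\nabla^1}(\mathbf{X}_-,\mathbf{Y}_+,\mathbf{Z}_+)=0$ will reduce to an equation of the schematic form
\[
g(D^{+,-}_{\mathbf{X}_-}Y,Z)-g(D^{+,-}_{\mathbf{X}_-}Z,Y)=\Theta_+(\mathbf{X}_-,Y,Z),
\]
where $\Theta_+$ is a known expression involving only the already-fixed $D^{+,+}$ and the $C$-bracket. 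Together with the $g$-preservation identity $g(D^{+,-}_{\mathbf{X}_-}Y,Z)+g(Y,D^{+,-}_{\mathbf{X}_-}Z)=0$, this is the standard Koszul-type pair of a symmetric and an antisymmetric condition in $(Y,Z)$, which by non-degeneracy of $g$ has a unique solution for $D^{+,-}_{\mathbf{X}_-}Y$. The symmetric role of $\pm$ yields the analogous unique solution for $D^{-,+}$.

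The main obstacle will be the algebraic book-keeping in the mixed-torsion identity: the $C$-bracket mixes the $L$ and $L^*$ components nontrivially, so extracting from $\mathcal{T}^{\nabla^1}(\mathbf{X}_-,\mathbf{Y}_+,\mathbf{Z}_+)=0$ precisely the piece that constrains the single unknown $D^{+,-}$ (and not $D^{-,+}$ or $D^{\pm,\pm}$) requires careful tracking of the $S_\pm$-projections. One must also check that the cross blocks obtained in this way are genuinely tensorial in the direction $\mathbf{X}_\mp$ and take values in $\mathfrak{so}(g)$, so that the assembled $\nabla^1$ really preserves both $\gamma$ and $\mathcal H$; these checks should follow from the total skew-symmetry of $\mathcal T^\nabla$ combined with the tensoriality of the Gualtieri torsion in (\ref{defGualt0}) established from Proposition \ref{existcr}. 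Once existence and uniqueness of each block are verified, reassembling through (\ref{nablaSpm}) produces the unique $\nabla^1$ claimed.
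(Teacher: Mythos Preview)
Your block decomposition and overall strategy match the paper's, but two points diverge from the actual argument.

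First, the mixed Gualtieri torsion is simpler than you anticipate. Because $\nabla$ preserves $S_\pm$ and $S_+\perp_\gamma S_-$, in the explicit expression of $\mathcal{T}^\nabla(\mathbf{X}_-,\mathbf{Y}_+,\mathbf{Z}_+)$ every term except $\gamma(\nabla_{\mathbf{X}_-}\mathbf{Y}_+,\mathbf{Z}_+)-\gamma([\mathbf{X}_-,\mathbf{Y}_+]_{\nabla^0},\mathbf{Z}_+)$ drops out. So the cross block is determined \emph{directly} by
\[
\nabla_{\mathbf{X}_-}\mathbf{Y}_+=pr_{S_+}[\mathbf{X}_-,\mathbf{Y}_+]_{\nabla^0},
\]
with no Koszul manoeuvre needed; the diagonal block $D^{+,+}$ does not appear at all in this equation, contrary to your description of $\Theta_+$. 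The $g$-preservation of the resulting $D^{+,-}$ then follows from the metric-algebroid identity (\ref{axvCalg}), not from an independent symmetric condition.

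Second, and more seriously, your choice $D^{+,+}_{\iota_+Z}X:=\nabla^{LC}_ZX$ is \emph{not} automatically $g$-preserving. Preservation in the $S_+$-direction requires $(\iota_+Z)(g(X,Y))=Z(g(X,Y))$, i.e.\ that the $\tilde L$-component $\sharp_\gamma\flat_{B+g}Z$ annihilates $g(X,Y)$; this holds only under the level matching constraint, an assumption the paper explicitly avoids here. The paper resolves this by a different reading of ``determined by the Levi-Civita connection'': it restricts $D^\pm$ to the $L$-direction (not the $S_\pm$-direction), obtaining partial connections $D^{\pm,L}$ along the foliation $L$, and sets \emph{these} equal to $\nabla^{LC}$. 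Since $(X,0)=\iota_+A_-X+\iota_-A_+X$ with $A_\pm=\tfrac12(Id\pm\sharp_g\flat_B)$ invertible, knowing $D^{\pm,L}$ together with the already-fixed cross blocks determines the diagonal blocks $D^{\pm,\pm}$ via (\ref{DpmL2}). This is the precise meaning of ``determined'' that the statement leaves to the proof.
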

\begin{proof}
The vanishing of the mixed torsion was used for a different purpose in \cite{G2}. The exact sense of the word ``determined" in the second condition will be explained in the course of the proof below.

Since $\nabla$ preserves $S_+$ and $S_-$ and these two bundles are $\gamma$-orthogonal, the condition
$\mathcal{T}^\nabla(\mathbf{X}_-,\mathbf{Y}_+,\mathbf{Z}_+)=0$ becomes
\begin{equation}\label{TG01} \gamma(\nabla_{\mathbf{X}_-}\mathbf{Y}_+,\mathbf{Z}_+)= \gamma([\mathbf{X}_-,\mathbf{Y}_+]_{\nabla^0},\mathbf{Z}_+),\end{equation}
which yields the covariant derivative $$\nabla_{ \mathbf{X}_-} \mathbf{Y}_+=pr_{S_+}[\mathbf{X}_-,\mathbf{Y}_+]_{\nabla^0}.$$
Similarly, the condition $\mathcal{T}^\nabla(\mathbf{X}_+,\mathbf{Y}_-,\mathbf{Z}_-)=0$ is equivalent to
\begin{equation}\label{TG02} \gamma(\nabla_{\mathbf{X}_+}\mathbf{Y}_-,\mathbf{Z}_-)= \gamma([\mathbf{X}_+,\mathbf{Y}_-]_{\nabla^0},\mathbf{Z}_-),\end{equation} which yields $$\nabla_{ \mathbf{X}_+} \mathbf{Y}_-=pr_{S_-}[\mathbf{X}_+,\mathbf{Y}_-]_{\nabla^0}.$$

We also notice that the conditions (\ref{TG01}), (\ref{TG02}) are equivalent to
\begin{equation}\label{DpmcuTG0} \begin{array}{l}
D^+_{\iota_{-}X}Y=pr_Lpr_{S_+}[\iota_{-}X,\iota_{+}Y]_{\nabla^0}
\hspace{3mm}(X,Y\in L)\vspace*{2mm}\\
D^-_{\iota_{+}X}Y=pr_Lpr_{S_-}[\iota_{+}X,\iota_{-}Y]_{\nabla^0}
\hspace{3mm}(X,Y\in L),\end{array}\end{equation}
where $D^\pm$ are the $g$-metric connections on $L$ that correspond to the double-metric connection $\nabla$.

Furthermore, the covariant derivatives $D^{\pm}_{(X,0)}$, $X\in L$ produce connections $D^{\pm, L}$ defined along the foliation $L$ (i.e., connections on the direct sum of the leaves of $L$ applied to arguments that are differentiable on $M$, also called partial connections) given by
$$D^{\pm, L}_XY=D^{\pm}_{\iota_+X_1}Y+D^{\pm}_{\iota_-X_2}Y\,\,(X,Y\in L),$$ where $X_1,X_2$ are given by the formula (\ref{descSpm}) with $\alpha=0$. If $\nabla$ has zero mixed Gualtieri torsion, using (\ref{DpmcuTG0}), the previous formulas become
\begin{equation}\label{DpmL}\begin{array}{l}
D^{+, L}_XY=D^{+}_{\iota_+X_1}Y+pr_Lpr_{S_+}[\iota_-X_2,\iota_+Y]_{\nabla^0},\,
\vspace*{2mm}\\
D^{-, L}_XY=D^{-}_{\iota_-X_2}Y+pr_Lpr_{S_-}[\iota_+X_1,\iota_-Y]_{\nabla^0}.
\end{array}\end{equation}
Since $g$ is symmetric and non degenerate, and $B$ is skew symmetric, it follows easily that the mappings $$A_\pm=\frac{1}{2}(Id\pm\sharp_g\flat_B):L\rightarrow L$$ are isomorphisms, which allows us to transform (\ref{DpmL}) into
\begin{equation}\label{DpmL2}\begin{array}{l} D^{+}_{\iota_+X}Y= D^{+,L}_{A_-^{-1}X}Y -pr_Lpr_{S_+}[\iota_-A_+
A_-^{-1}X,\iota_+Y]_{\nabla^0},\vspace*{2mm}\\
D^{-}_{\iota_-X}Y= D^{-,L}_{A_+^{-1}X}Y  -pr_Lpr_{V_-}[\iota_-A_-A_+^{-1}X,\iota_-Y]_{\nabla^0}.
\end{array}\end{equation}

Accordingly, there exists a unique, double-metric connection $\nabla$, with vanishing mixed Gualtieri torsion and such that the two connections $D^{\pm, L}$ are equal to the Levi-Civita connection of the metric $g$. This is the required condition of the proposition with the precise meaning of the determination of $\nabla|_{S_\pm}$ by the Levi-Civita connection of $g$. It only remains to denote $\nabla=\nabla^1$, $D^{\pm}=D^{1,\pm}$.
\end{proof}
\begin{defin}\label{defc1} {\rm The connection $\nabla^1$ provided by Proposition \ref{propconex1} will be called the {\it CWT (canonical with torsion) double-metric connection }.}\end{defin}

A second canonical, double-metric connection will be obtained by the following procedure.
\begin{prop}\label{propG0} The double-metric connection $\nabla$ of $M$ has a vanishing Gualtieri torsion iff the following relation holds for any vector fields
\begin{equation}\label{TG04} \begin{array}{l}
\gamma(\nabla_{\mathbf{X}}\mathbf{Y},\mathbf{Z})
+\gamma(\nabla_{\mathbf{Y}}\mathbf{Z},\mathbf{X})
+\gamma(\nabla_{\mathbf{Z}}\mathbf{X},\mathbf{Y})\vspace*{2mm}\\
=\gamma([\mathbf{X},\mathbf{Y}]_{\nabla^0},\mathbf{Z})
+\gamma([\mathbf{Y},\mathbf{Z}]_{\nabla^0},\mathbf{X})
+\gamma([\mathbf{X},\mathbf{Z}]_{\nabla^0},\mathbf{Y})\vspace*{2mm}\\
-\frac{1}{2}[\mathbf{X}(\gamma(\mathbf{Y},\mathbf{Z}))
-\mathbf{Y}(\gamma(\mathbf{Z},\mathbf{X}))
-3\mathbf{Z}(\gamma(\mathbf{X},\mathbf{Y}))].\end{array}\end{equation}
\end{prop}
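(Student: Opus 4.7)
The plan is to unwind $\mathcal{T}^{\nabla}(\mathbf{X},\mathbf{Y},\mathbf{Z})=0$ from the explicit formula for the Gualtieri torsion derived just above the statement, then to massage the resulting identity using only two ingredients: the metric-preservation relations (\ref{nablagamma}) for $\nabla$, and the $\gamma$-compatibility axiom (\ref{axvCalg}) of the $\nabla^{0}$-bracket. Both are equalities, so every substitution is reversible and the ``iff'' follows at once.

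As a first step, I would write $\mathcal{T}^{\nabla}(\mathbf{X},\mathbf{Y},\mathbf{Z})=0$ explicitly and then apply the first relation in (\ref{nablagamma}) to the two terms $\gamma(\nabla_{\mathbf{Y}}\mathbf{X},\mathbf{Z})$ and $\gamma(\nabla_{\mathbf{Z}}\mathbf{Y},\mathbf{X})$, shifting the connection onto the opposite argument and using the symmetry of $\gamma$. After collecting terms, the vanishing of $\mathcal{T}^{\nabla}$ becomes the intermediate identity
$$\gamma(\nabla_{\mathbf{X}}\mathbf{Y},\mathbf{Z})+\gamma(\nabla_{\mathbf{Y}}\mathbf{Z},\mathbf{X})+\gamma(\nabla_{\mathbf{Z}}\mathbf{X},\mathbf{Y})=\gamma([\mathbf{X},\mathbf{Y}]_{\nabla^{0}},\mathbf{Z})+\mathbf{Y}(\gamma(\mathbf{X},\mathbf{Z}))+\tfrac{1}{2}\mathbf{Z}(\gamma(\mathbf{X},\mathbf{Y})).$$
The left-hand side is already the left-hand side of (\ref{TG04}); what remains is to reshape the right-hand side.

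The decisive second step uses (\ref{axvCalg}) to trade the two bare derivative terms for two additional bracket terms. Writing (\ref{axvCalg}) and using the skew symmetry of $[\,,\,]_{\nabla^{0}}$ to flip $[\mathbf{Z},\mathbf{X}]\mapsto -[\mathbf{X},\mathbf{Z}]$ and $[\mathbf{Z},\mathbf{Y}]\mapsto -[\mathbf{Y},\mathbf{Z}]$, one extracts
$$\gamma([\mathbf{X},\mathbf{Z}]_{\nabla^{0}},\mathbf{Y})+\gamma([\mathbf{Y},\mathbf{Z}]_{\nabla^{0}},\mathbf{X})=-\mathbf{Z}(\gamma(\mathbf{X},\mathbf{Y}))+\tfrac{1}{2}\mathbf{X}(\gamma(\mathbf{Z},\mathbf{Y}))+\tfrac{1}{2}\mathbf{Y}(\gamma(\mathbf{Z},\mathbf{X})).$$
Solving this for the derivative pair on the right, substituting into the intermediate identity, and using symmetry of $\gamma$ to identify $\gamma(\mathbf{Z},\mathbf{Y})=\gamma(\mathbf{Y},\mathbf{Z})$ and $\gamma(\mathbf{Z},\mathbf{X})=\gamma(\mathbf{X},\mathbf{Z})$, the right-hand side reassembles into precisely the right-hand side of (\ref{TG04}), with the asymmetric coefficients $-\tfrac{1}{2},+\tfrac{1}{2},+\tfrac{3}{2}$ in front of $\mathbf{X}(\gamma(\mathbf{Y},\mathbf{Z})),\mathbf{Y}(\gamma(\mathbf{Z},\mathbf{X})),\mathbf{Z}(\gamma(\mathbf{X},\mathbf{Y}))$.

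The converse implication is obtained simply by reading the same chain of substitutions in reverse, which is legitimate because (\ref{nablagamma}) and (\ref{axvCalg}) are genuine equalities. I expect the only real nuisance when writing this out to be book-keeping: tracking the coefficients $\pm\tfrac{1}{2},+\tfrac{3}{2}$ and, in particular, the non-cyclic bracket placement $[\mathbf{X},\mathbf{Z}]$ (rather than $[\mathbf{Z},\mathbf{X}]$) in (\ref{TG04}) forces one to be careful about signs introduced by the skew symmetry of the bracket. No conceptually new ingredient beyond metric preservation and the bracket axiom (\ref{axvCalg}) enters.
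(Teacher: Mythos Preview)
Your argument is correct and, in fact, more direct than the paper's. You work with arbitrary vector fields throughout: you expand $\mathcal{T}^{\nabla}(\mathbf{X},\mathbf{Y},\mathbf{Z})=0$, apply the $\gamma$-preservation identity twice to reach the intermediate form, and then invoke (\ref{axvCalg}) once to insert the two missing bracket terms. Every substitution is an identity, so the equivalence is immediate. A minor remark: your argument uses only the first line of (\ref{nablagamma}) (preservation of $\gamma$), never the preservation of $\mathcal{H}$, so you have actually shown that (\ref{TG04}) characterizes the vanishing of the Gualtieri torsion for \emph{any} $\gamma$-preserving connection, not just double-metric ones.

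The paper proceeds quite differently. It first splits all arguments along $TM=S_{+}\oplus S_{-}$ and treats the mixed and pure components of $\mathcal{T}^{\nabla}$ separately: the vanishing of the mixed torsion gives (\ref{TG01}), (\ref{TG02}); for the pure part one writes $\mathcal{T}^{\nabla}(\mathbf{X}_\pm,\mathbf{Y}_\pm,\mathbf{Z}_\pm)=0$, adds the first cyclic permutation and subtracts the second, and uses (\ref{nablagamma}) to reach (\ref{TG03}); finally one checks, via (\ref{axvCalg}), that (\ref{TG04}) evaluated on mixed arguments reproduces (\ref{TG01}), (\ref{TG02}). Your route avoids the $S_\pm$ decomposition and the cyclic-permutation trick entirely. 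The paper's longer approach does, however, record the component formulas (\ref{TG03}) and their $L$-translations (\ref{DpmcuTG3}) along the way, and these are what get used in the subsequent construction of the canonical connections; your shorter proof would have to derive them separately if they were needed later.
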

\begin{proof}
Like in Proposition \ref{propconex1}, the vanishing of the mixed torsion implies the formulas (\ref{TG01}), (\ref{TG02}). Furthermore, we must have $\mathcal{T}^\nabla(\mathbf{X}_\pm,\mathbf{Y}_\pm,\mathbf{Z}_\pm)=0$, equivalently,
$$\begin{array}{c}\gamma(\nabla_{\mathbf{X}_\pm}\mathbf{Y}_\pm,\mathbf{Z}_\pm)
-\gamma(\nabla_{\mathbf{Y}_\pm}\mathbf{X}_\pm,\mathbf{Z}_\pm)
+\frac{1}{2}\gamma(\nabla_{\mathbf{Z}_\pm}\mathbf{X}_\pm,\mathbf{Y}_\pm)\vspace*{2mm}\\
-\frac{1}{2}\gamma(\nabla_{\mathbf{Z}_\pm}\mathbf{Y}_\pm,\mathbf{X}_\pm)
=\gamma([\mathbf{X}_\pm,\mathbf{Y}_\pm]_{\nabla^0},\mathbf{Z}_\pm).\end{array}$$
To this equality we add its first cyclic permutation, then, subtract the second cyclic permutation. The result is
$$\begin{array}{l}
\frac{3}{2}\gamma(\nabla_{\mathbf{X}_\pm}\mathbf{Y}_\pm,\mathbf{Z}_\pm)
+\frac{1}{2}\gamma(\nabla_{\mathbf{X}_\pm}\mathbf{Z}_\pm,\mathbf{Y}_\pm)
-\frac{3}{2}\gamma(\nabla_{\mathbf{Z}_\pm}\mathbf{Y}_\pm,\mathbf{X}_\pm)\vspace*{2mm}\\
-\frac{1}{2}\gamma(\nabla_{\mathbf{Z}_\pm}\mathbf{X}_\pm,\mathbf{Y}_\pm)
+\frac{1}{2}\gamma(\nabla_{\mathbf{Y}_\pm}\mathbf{Z}_\pm,\mathbf{X}_\pm)
-\frac{1}{2}\gamma(\nabla_{\mathbf{Y}_\pm}\mathbf{X}_\pm,\mathbf{Z}_\pm)\vspace*{2mm}\\
=\gamma([\mathbf{X}_\pm,\mathbf{Y}_\pm]_{\nabla^0},\mathbf{Z}_\pm)
+\gamma([\mathbf{Y}_\pm,\mathbf{Z}_\pm]_{\nabla^0},\mathbf{X}_\pm)
+\gamma([\mathbf{X}_\pm,\mathbf{Z}_\pm]_{\nabla^0},\mathbf{Y}_\pm),\end{array}$$
which, modulo (\ref{nablagamma}), becomes
\begin{equation}\label{TG03} \begin{array}{l}
\gamma(\nabla_{\mathbf{X}_\pm}\mathbf{Y}_\pm,\mathbf{Z}_\pm)
+\gamma(\nabla_{\mathbf{Y}_\pm}\mathbf{Z}_\pm,\mathbf{X}_\pm)
+\gamma(\nabla_{\mathbf{Z}_\pm}\mathbf{X}_\pm,\mathbf{Y}_\pm)\vspace*{2mm}\\
=\gamma([\mathbf{X}_\pm,\mathbf{Y}_\pm]_{\nabla^0},\mathbf{Z}_\pm)
+\gamma([\mathbf{Y}_\pm,\mathbf{Z}_\pm]_{\nabla^0},\mathbf{X}_\pm)
+\gamma([\mathbf{X}_\pm,\mathbf{Z}_\pm]_{\nabla^0},\mathbf{Y}_\pm)\vspace*{2mm}\\
-\frac{1}{2}[\mathbf{X}_\pm(\gamma(\mathbf{Y}_\pm,\mathbf{Z}_\pm))
-\mathbf{Y}_\pm(\gamma(\mathbf{Z}_\pm,\mathbf{X}_\pm))
-3\mathbf{Z}_\pm(\gamma(\mathbf{X}_\pm,\mathbf{Y}_\pm))].\end{array}\end{equation}

Formula (\ref{TG03}) is the same as (\ref{TG04}) if all the arguments are either in $S_+$ or in $S_-$.

On the other hand, if (\ref{TG04}) is written for arguments $ \mathbf{X}_\mp,
\mathbf{Y}_\pm,\mathbf{Z}_\pm$, while using $S_+\perp_{\gamma}S_-$, the result is
\begin{equation}\label{aux1}\begin{array}{c}
\gamma(\nabla_{\mathbf{X}_\mp}\mathbf{Y}_\pm,\mathbf{Z}_\pm)
=\gamma([\mathbf{X}_\mp,\mathbf{Y}_\pm]_{\nabla^0},\mathbf{Z}_\pm)
+\gamma([\mathbf{Y}_\pm,\mathbf{Z}_\pm]_{\nabla^0},\mathbf{X}_\mp)\vspace*{2mm}\\
+\gamma([\mathbf{X}_\mp,\mathbf{Z}_\pm]_{\nabla^0},\mathbf{Y}_\pm)
-\frac{1}{2}\mathbf{X}_\mp(\gamma(\mathbf{Y}_\pm,\mathbf{Z}_\pm)).
\end{array}\end{equation}

Finally, if we use (\ref{axvCalg}) for $\mathbf{X}_\mp, \mathbf{Y}_\pm,\mathbf{Z}_\pm$, we see that (\ref{aux1}) reduces to (\ref{TG01}), (\ref{TG02}), thus, justifying the general formula (\ref{TG04}).
\end{proof}

Of course, for the connections $\nabla$ with vanishing Gualtieri torsion the formulas (\ref{DpmcuTG0}) also hold.
Furthermore, the equality (\ref{TG03}) is equivalent to
\begin{equation}\label{DpmcuTG3}\begin{array}{l} g(D^\pm_{\iota_\pm X}Y,Z)
+g(D^\pm_{\iota_\pm Y}Z,X)+g(D^\pm_{\iota_\pm Z}X,Y)
\vspace*{2mm}\\ = \pm g(pr_Lpr_{S_\pm}[\iota_{\pm}{X},\iota_{\pm}{Y}]_{\nabla^0},Z)
\pm g(pr_Lpr_{S_\pm}[\iota_{\pm}{Y},\iota_{\pm}{Z}]_{\nabla^0},X)
\vspace*{2mm}\\ \pm g(pr_Lpr_{S_\pm}[\iota_{\pm}{X},\iota_{\pm}{Z}]_{\nabla^0},Y)
\mp\frac{1}{2}[(\iota_{\pm}X)(g(Y,Z))\vspace*{2mm}\\-(\iota_{\pm}Y)(g(Z,X))
-3(\iota_{\pm}Z)(g(X,Y)],\;\;X,Y,Z\in \Gamma L.\end{array}\end{equation}

Now, we continue as follows. Let $\tilde{\nabla}$ be an arbitrary double-metric connection and put
\begin{equation}\label{Theta1} \nabla_{ \mathbf{X}} \mathbf{Y}= \tilde{\nabla}_{ \mathbf{X}} \mathbf{Y}+\Theta( \mathbf{X},\mathbf{Y}),\end{equation}
where $\Theta$ is a tensor field of type $(1,2)$. We will also denote
\begin{equation}\label{Psi} \Psi( \mathbf{X},\mathbf{Y},\mathbf{Z})=
\gamma(\Theta( \mathbf{X},\mathbf{Y}),\mathbf{Z}).\end{equation} Since the two connections preserve $\gamma$, we must have
\begin{equation}\label{Psi2} \Psi( \mathbf{X},\mathbf{Y},\mathbf{Z})=
-\Psi( \mathbf{X},\mathbf{Z},\mathbf{Y}).\end{equation} We will refer to (\ref{Theta1}) as a {\it deformation} of the connection $\tilde{\nabla}$, with deformation tensor $\Theta$ and covariant deformation $\Psi$.
\begin{prop}\label{propdeform} For any double-metric connection $\tilde{\nabla}$ there exists a unique deformation with a totally skew symmetric, covariant deformation tensor that leads to a double-metric connection $\nabla$ with a vanishing Gualtieri torsion.\end{prop}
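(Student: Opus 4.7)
The plan is to compute the Gualtieri torsion of the candidate deformed connection $\nabla = \tilde{\nabla} + \Theta$ directly in terms of $\mathcal{T}^{\tilde{\nabla}}$ and the covariant deformation $\Psi(\mathbf{X},\mathbf{Y},\mathbf{Z}) = \gamma(\Theta(\mathbf{X},\mathbf{Y}),\mathbf{Z})$, and then to exploit total skew-symmetry of $\Psi$ to collapse the result. Substituting $\nabla = \tilde{\nabla} + \Theta$ into (\ref{modtors0}) and evaluating the difference $\mathbf{X}\wedge_{\nabla}\mathbf{Y} - \mathbf{X}\wedge_{\tilde{\nabla}}\mathbf{Y}$ via (\ref{wedgenabla}), a routine computation yields
\[
\mathcal{T}^{\nabla}(\mathbf{X},\mathbf{Y},\mathbf{Z}) - \mathcal{T}^{\tilde{\nabla}}(\mathbf{X},\mathbf{Y},\mathbf{Z}) = \Psi(\mathbf{X},\mathbf{Y},\mathbf{Z}) - \Psi(\mathbf{Y},\mathbf{X},\mathbf{Z}) - \tfrac{1}{2}\bigl[\Psi(\mathbf{Z},\mathbf{Y},\mathbf{X}) - \Psi(\mathbf{Z},\mathbf{X},\mathbf{Y})\bigr].
\]
When $\Psi$ is totally skew-symmetric, each of the four $\Psi$-terms on the right equals $\pm\Psi(\mathbf{X},\mathbf{Y},\mathbf{Z})$ according to the parity of the corresponding permutation (the two transpositions contribute $-\Psi$ each and the cyclic rotation contributes $+\Psi$), and a small arithmetic check shows the right-hand side reduces to $3\Psi(\mathbf{X},\mathbf{Y},\mathbf{Z})$. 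Thus, under the totally-skew hypothesis,
\[
\mathcal{T}^{\nabla} = \mathcal{T}^{\tilde{\nabla}} + 3\,\Psi.
\]

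From this identity, both uniqueness and existence follow at once. The requirement $\mathcal{T}^{\nabla} = 0$ forces the unique prescription $\Psi = -\tfrac{1}{3}\,\mathcal{T}^{\tilde{\nabla}}$, and since the Gualtieri torsion of any $\gamma$-preserving connection is automatically totally skew-symmetric (the observation immediately after (\ref{defGualt0})), this $\Psi$ is a legitimate totally-skew covariant deformation tensor, yielding the required $\Theta$ via $\gamma(\Theta(\mathbf{X},\mathbf{Y}),\mathbf{Z}) = -\tfrac{1}{3}\mathcal{T}^{\tilde{\nabla}}(\mathbf{X},\mathbf{Y},\mathbf{Z})$.

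The step I anticipate as the main obstacle is verifying that the resulting $\nabla$ is actually \emph{double}-metric, i.e., still preserves $\mathcal{H}$ in addition to $\gamma$. Since $\tilde{\nabla}$ is $\mathcal{H}$-preserving, this amounts to $\Theta$ commuting with the para-complex structure $\Phi$, equivalently to $\Psi = -\tfrac{1}{3}\mathcal{T}^{\tilde{\nabla}}$ vanishing on every triple whose arguments are not all in the same eigenbundle $S_+$ or $S_-$ of $\Phi$. The plan here is to exploit that $\tilde{\nabla}$ already preserves the decomposition $L\oplus L^* = S_+\oplus S_-$, together with the orthogonality $S_+\perp_\gamma S_-$, to inspect the mixed components of $T^{\tilde{\nabla}}$ and of $\mathbf{X}\wedge_{\tilde{\nabla}}\mathbf{Y}$ and verify that $\mathcal{T}^{\tilde{\nabla}}$ has exactly the $S_\pm$-block structure needed for the resulting $\Theta$ to respect the decomposition; should an arbitrary double-metric $\tilde{\nabla}$ prove insufficient, it suffices to specialize to the CWT-type connections of Proposition \ref{propconex1}, whose mixed Gualtieri torsion vanishes by construction.
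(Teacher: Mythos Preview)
Your computation of $\mathcal{T}^{\nabla} - \mathcal{T}^{\tilde{\nabla}}$ is correct and gives a more direct argument than the paper's own. The paper instead substitutes $\nabla = \tilde{\nabla} + \Theta$ into the characterization (\ref{TG04}) of Proposition~\ref{propG0}, obtaining (\ref{TG05}), and then checks via the metric axiom (\ref{axvCalg}) that the right-hand side of (\ref{TG05}) is a totally skew-symmetric tensor; taking $\Psi$ to be one third of that right-hand side is equivalent to your prescription $\Psi = -\tfrac{1}{3}\mathcal{T}^{\tilde{\nabla}}$. Your route bypasses Proposition~\ref{propG0} entirely and makes both existence and uniqueness immediate from a single identity.

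Your concern about $\mathcal{H}$-preservation is well founded, and the paper's proof does not address it either. Indeed, for $\Theta(\mathbf{Z},\cdot)$ to preserve $S_\pm$ one needs $\Psi(\mathbf{Z},\mathbf{Y}_+,\mathbf{W}_-)=0$ for all $\mathbf{Z}$; since $\Psi = -\tfrac{1}{3}\mathcal{T}^{\tilde{\nabla}}$ is totally skew, this amounts exactly to the vanishing of the \emph{mixed} Gualtieri torsion of $\tilde{\nabla}$, which is not automatic for an arbitrary double-metric connection. So the proposition, read literally, appears to overreach, and your hedge is the right repair: for $\tilde{\nabla}=\nabla^1$ (the CWT connection of Proposition~\ref{propconex1}) the mixed torsion vanishes by construction, hence $\Psi$ has only pure $S_\pm$-components and the deformed $\nabla$ is genuinely double-metric. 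This is the only case actually used downstream (Proposition~\ref{thconex}), so nothing in the paper's subsequent development is affected.
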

\begin{proof}
With (\ref{Theta1}), (\ref{Psi}), (\ref{Psi2}), formula (\ref{TG04}) becomes
\begin{equation}\label{TG05} \begin{array}{l}
3Alt(\Psi(\mathbf{X},\mathbf{Y},\mathbf{Z}))
=\gamma([\mathbf{X},\mathbf{Y}]_{\nabla^0},\mathbf{Z})
+\gamma([\mathbf{Y},\mathbf{Z}]_{\nabla^0},\mathbf{X})\vspace*{2mm}\\
+\gamma([\mathbf{X},\mathbf{Z}]_{\nabla^0},\mathbf{Y})
-\frac{1}{2}[\mathbf{X}(\gamma(\mathbf{Y},\mathbf{Z}))
-\mathbf{Y}(\gamma(\mathbf{Z},\mathbf{X}))\vspace*{2mm}\\
-3\mathbf{Z}(\gamma(\mathbf{X},\mathbf{Y}))]
-[\gamma(\tilde{\nabla}_{\mathbf{X}}\mathbf{Y},\mathbf{Z})
+\gamma(\tilde{\nabla}_{\mathbf{Y}}\mathbf{Z},\mathbf{X})
+\gamma(\tilde{\nabla}_{\mathbf{Z}}\mathbf{X},\mathbf{Y})],\end{array}
\end{equation}
where $Alt$ denotes the alternation of a tensor. (Notice that the symmetrization $Sym(\Psi(\mathbf{X},\mathbf{Y},\mathbf{Z}))=0$ because of (\ref{Psi2})).

The metric axiom (\ref{axvCalg}) allow us to check that the right hand side of (\ref{TG05}) is a totally skew symmetric tensor field (it is $C^\infty(M)$-trilinear). Therefore, if we define $\Psi$ by the right hand side of (\ref{TG05}), we get the required deformation.\end{proof}
\begin{rem}\label{obsunicitate} {\rm If the condition of skew symmetry of $\Psi$ is dropped, while still asking (\ref{Psi2}), we get a family of double-metric connections with a vanishing Gualtieri torsion. All the tensor fields $\Psi$ that satisfy (\ref{Psi2}) are obtained by adding to the totally skew symmetric solution of (\ref{TG05}) any $3$-covariant tensor that is skew symmetric in the last two arguments and has a vanishing alternation. Such tensors exist because their vector space is of dimension larger than that of the totally skew symmetric tensors and $Alt$ is an epimorphism.}\end{rem}

Concerning the connection deformations above we also have the following results. The preservation of the generalized metric $\mathcal{H}$ by both $\nabla$ and $\tilde{\nabla}$ implies that the operators $i(\mathbf{X})\Theta$ preserve $S_\pm$, i.e.,
$$ \Theta(\mathbf{X},\iota_\pm Y)=
\iota_\pm(\theta^\pm(\mathbf{X},Y)),\;\;Y\in L,$$
where $\theta^\pm$ are $L$-valued tensorial forms defined on $TM\times L$, which, therefore, determine $\Theta$.

Then, for arguments $\mathbf{X}=\iota_\pm X,\mathbf{Y}=\iota_\pm Y,\mathbf{Z}=\iota_\pm Z$ ($X,Y,Z\in L$) formula (\ref{TG05}) reduces to
\begin{equation}\label{Altpsi} \begin{array}{l}
3Alt(\psi^\pm(X,Y,Z))=
\pm g(pr_Lpr_{S_\pm}[\iota_{\pm}{X},\iota_{\pm}{Y}]_{\nabla^0},Z)\vspace*{2mm}\\
\pm g(pr_Lpr_{S_\pm}[\iota_{\pm}{Y},\iota_{\pm}{Z}]_{\nabla^0},X)
\pm g(pr_Lpr_{S_\pm}[\iota_{\pm}{X},\iota_{\pm}{Z}]_{\nabla^0},Y)\vspace*{2mm}\\
\mp\frac{1}{2}[(\iota_{\pm}X)(g(Y,Z))-(\iota_{\pm}Y)(g(Z,X))
-3(\iota_{\pm}Z)(g(X,Y)]\vspace*{2mm}\\ \mp
[g(\tilde{D}^{\pm}_{\iota_\pm X}Y,Z)+g(\tilde{D}^{\pm}_{\iota_\pm Y}Z,X)+g(\tilde{D}^{\pm}_{\iota_\pm Z}X,Y)],
\end{array}\end{equation}
where $\tilde{D}^{\pm}$ are the connections on $L$ that determine $\tilde{\nabla}$ in the sense of (\ref{nablaSpm}) and
$$ \psi^\pm(X,Y,Z)
=\Psi(\iota_\pm X,\iota_\pm Y,\iota_\pm Z)=g(\theta^\pm(\iota_\pm X, Y),Z).
$$

Obviously, the unique deformation $\nabla$ of Proposition \ref{propdeform} is defined by the unique couple of $3$-forms $\psi^\pm$ given by (\ref{Altpsi}).
\begin{prop}\label{thconex} For any field $(g,B)$ there exists a unique double-metric connection $\nabla$ on $M$ with the following properties: 1) the Gualtieri torsion of $\nabla$ is zero, 2) the covariant deformation $\Psi$ of $\Theta=\nabla-\nabla^1$, where $\nabla^1$ is the CWT connection of the field, is totally skew symmetric and defined by {\rm(\ref{TG05})} (equivalently, by {\rm(\ref{Altpsi})}).\end{prop}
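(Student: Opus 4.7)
The proof is essentially a direct application of Proposition \ref{propdeform}, taken with the specific choice $\tilde{\nabla} = \nabla^1$. The plan is to show that both the existence and uniqueness assertions reduce to that proposition, and then to read off the equivalence of the defining formula (\ref{TG05}) with its pure-component form (\ref{Altpsi}).

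First I would observe that $\nabla^1$ is a legitimate double-metric connection (by Proposition \ref{propconex1}) and therefore qualifies as the base connection $\tilde{\nabla}$ in Proposition \ref{propdeform}. Applying that proposition to $\nabla^1$ immediately produces a unique totally skew symmetric covariant deformation $\Psi$ for which $\nabla := \nabla^1 + \Theta$ (with $\Theta$ defined from $\Psi$ by (\ref{Psi})) is a double-metric connection with vanishing Gualtieri torsion, and $\Psi$ is given explicitly by (\ref{TG05}) upon substituting $\tilde{\nabla} = \nabla^1$. Both clauses of the statement — existence and uniqueness under the skew-symmetry requirement — are thus obtained in one stroke.

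The substantive step is to justify that (\ref{TG05}) and (\ref{Altpsi}) carry the same information, i.e.\ that the totally skew $\Psi$ produced by (\ref{TG05}) is entirely encoded in its pure components $\psi^\pm$ on $S_+$ and $S_-$. For this, I would combine two facts: (i) since both $\nabla$ and $\nabla^1$ preserve the generalized metric $\mathcal{H}$, the difference tensor $\Theta$ preserves the subbundles $S_\pm$, so $\Theta(\mathbf{X},\iota_\pm Y) = \iota_\pm(\theta^\pm(\mathbf{X},Y))$; (ii) the subbundles $S_+$ and $S_-$ are $\gamma$-orthogonal (as follows from $\gamma(\Phi\,\cdot,\Phi\,\cdot)=\gamma(\,\cdot,\,\cdot)$ applied to eigenvectors of $\Phi$ with opposite signs). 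Consequently $\Psi(\mathbf{X},\iota_+ Y,\iota_- Z) = \gamma(\iota_+\theta^+(\mathbf{X},Y),\iota_- Z) = 0$, and total skew symmetry of $\Psi$ then forces $\Psi$ to vanish on every triple containing components from both $S_+$ and $S_-$. Hence $\Psi$ is determined by the restrictions $\psi^\pm(X,Y,Z) = \Psi(\iota_\pm X,\iota_\pm Y,\iota_\pm Z)$, and substituting arguments $(\iota_\pm X,\iota_\pm Y,\iota_\pm Z)$ into (\ref{TG05}) yields exactly (\ref{Altpsi}), since in that case $\tilde{\nabla}^1$ restricted to the pure sector is encoded by the connections $\tilde{D}^{1,\pm}$ on $L$.

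The only delicate point, which I regard as the main obstacle, is verifying that the mixed components of (\ref{TG05}) are automatically satisfied — in other words, that applying (\ref{TG05}) with mixed arguments $(\mathbf{X}_\mp,\mathbf{Y}_\pm,\mathbf{Z}_\pm)$ returns $0$ on the right-hand side (consistent with $\Psi = 0$ on mixed triples). This requires using the metric axiom (\ref{axvCalg}) together with the fact that $\nabla^1$ already has vanishing mixed Gualtieri torsion, so that the corresponding mixed instance of (\ref{TG04}) reduces to the identities (\ref{TG01}), (\ref{TG02}) which are satisfied by $\nabla^1$. This is the same mechanism that made the argument of Proposition \ref{propG0} work, and once it is in hand the proof is complete: existence and uniqueness come from Proposition \ref{propdeform}, and (\ref{Altpsi}) is simply the pure-sector specialization of (\ref{TG05}).
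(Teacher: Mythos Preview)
Your proposal is correct and follows the same approach as the paper: the proof there is the single sentence ``Apply the construction of Proposition \ref{propdeform} to $\tilde{\nabla}=\nabla^1$.'' Your additional discussion justifying the equivalence of (\ref{TG05}) with (\ref{Altpsi}) is also correct, though the paper handles that equivalence in the text preceding the proposition (the paragraph ending ``Obviously, the unique deformation $\nabla$ of Proposition \ref{propdeform} is defined by the unique couple of $3$-forms $\psi^\pm$ given by (\ref{Altpsi})'') rather than inside the proof itself.
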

\begin{proof} Apply the construction of Proposition \ref{propdeform} to $\tilde{\nabla}=\nabla^1$.\end{proof}
\begin{defin}\label{defconexcan} {\rm The connection $\nabla$ given by Theorem \ref{thconex} will be called the {\it VTC (vanishing torsion canonical) connection of the field $(g,B)$}.}\end{defin}

In order to produce a field theory, one needs an action expressed by an integral where the integrand is an invariant of the scalar curvature type. We start by considering the modified curvature tensor of an arbitrary double-metric connection $\nabla$, which, by (\ref{modcurvD}), has the expression
$$
\mathcal{R}^\nabla(\mathbf{X},\mathbf{Y}) \mathbf{Z}=\nabla_\mathbf{X}
\nabla_\mathbf{Y} \mathbf{Z}-\nabla_\mathbf{Y}
\nabla_\mathbf{X}
\mathbf{Z}-\nabla_{[\mathbf{X},\mathbf{Y}]_{\nabla^0}^\nabla} \mathbf{Z}.
$$

Furthermore, consider the pseudo-Kronecker symbol
$$ \delta_{(p)ij}=\delta_{(p)}^{ij}=
 \delta_{(p)i}^j=\left\{\begin{array}{rl} 0&{\rm if}\,i\neq j
 \vspace*{2mm}\\ 1&{\rm if}\,i=j=1,...,p
 \vspace*{2mm}\\ -1&{\rm if}\,i=j=p+1,...,m.
 \end{array}\right. $$
Then, we can define a modified Ricci curvature by
$$ \begin{array}{c}
\rho^\nabla( \mathbf{X},\mathbf{Y})
=\delta_{(p)}^{ij}\mathcal{H}(\iota_+e_i,
\mathcal{R}^\nabla(\mathbf{X},\iota_+e_j) \mathbf{Y})\vspace*{2mm}\\ +\delta_{(p)}^{ij}\mathcal{H}(\iota_-e_i,\mathcal{R}^\nabla( \mathbf{X},\iota_-e_j) \mathbf{Y}),\end{array}$$
where $(e_i)$ is a pseudo-orthonormal basis and $p$ is the positive inertia index of $g$. The independence of $\rho^\nabla$ upon the choice of the basis is a consequence of $\mathcal{H}|_{S_\pm}=2\iota_\pm g$.

We shall also define the symmetrized, modified Ricci tensor
$$
 \rho_{sym}^\nabla(\mathbf{X},\mathbf{Y})= \frac{1}{2}(\rho^\nabla( \mathbf{X},\mathbf{Y}) +\rho^\nabla(\mathbf{Y},\mathbf{X})).$$

Finally, we define the modified scalar curvature by
$$ \kappa( \mathcal{H},\nabla)=
\delta_{(p)}^{ij}\rho_{sym}^\nabla(\iota_+e_i,\iota_+e_j)
+\delta_{(p)}^{ij}\rho_{sym}^\nabla(\iota_-e_i,\iota_-e_j).$$

Now, the transformation formulas (\ref{locafin}) show that the double manifold $M$ is orientable and the expression
$$ d(vol_{\mathcal{H}})=\sqrt{|det( \mathcal{H})|}dx^1\wedge...\wedge dx^m\wedge d\tilde{x}_1\wedge...\wedge d\tilde{x}_m$$
is a global volume form, which is parallel with respect to any double-metric connection. Thus, it is natural to integrate with respect to this form in the definition of an action. (In physics, it is more usual to integrate densities rather than forms; see the Appendix at the end of the paper.)

Of course, from the local basis $(dx^i,d\tilde{x}_j)$ of the cotangent bundle $T^*M$ we may go to
an arbitrary basis $\theta^u$, in which case $\sqrt{|det( \mathcal{H})|}$ is multiplied by $|det(S)|^{-1}$ where $S$ is the matrix transforming $(dx^i,d\tilde{x}_j)$ into $(\theta^u)$. In particular, let $(e_i)$ be a local basis of $L$ (not necessarily pseudo-orthonormal) and let $\epsilon^j$ be the corresponding dual basis $(\epsilon^j(e_i)=\delta_i^j)$. Then, we have the basis $(\iota_+e_i,\iota_-e_i)$ in $TM$ and its dual basis $\epsilon^{+,j}=\iota_+^{-1*}\epsilon^{j},
\epsilon^{-,j}=\iota_-^{-1*}\epsilon^{j}$. The relation (\ref{defHS}) between the metrics $g$ and $\mathcal{H}$ has the consequence that the value of $\sqrt{|det( \mathcal{H})|}$ in these bases is equal to $2^m|det(g)|$, which yields
$$ d(vol_{\mathcal{H}})=2^m|det(g)|\epsilon^{+,1}\wedge...\wedge \epsilon^{+,m}\wedge \epsilon^{-,1}\wedge...\wedge \epsilon^{-,m}.$$

Accordingly, and also taking into consideration the scalar dilation $\varphi$ of the field, we may look at two canonically defined actions:
$$ \mathcal{A}=\int_Me^{-2\varphi}\kappa(\mathcal{H},\nabla)d(vol_{\mathcal{H}}),\;
\mathcal{A}_1=\int_Me^{-2\varphi}\kappa(\mathcal{H},\nabla^1)d(vol_{\mathcal{H}}),
$$
where $\nabla$ is the VCT connection and $\nabla^1$ is the CWT connection of $M$. Of course, we will have to impose conditions ensuring that the integrals are finite.

The study of these actions and their possible interest for physics is beyond the scope of this paper.

We end this section with the following remarks. The modified curvature of a double-metric connection with a vanishing Gualtieri torsion satisfies the Bianchi identity
\begin{equation}\label{Bianchi} \sum_{Cycl(\mathbf{X},\mathbf{Y},\mathbf{Z})} \mathcal{R}^\nabla(\mathbf{X},\mathbf{Y})\mathbf{Z}=
\sum_{Cycl(\mathbf{X},\mathbf{Y},\mathbf{Z})}[\mathbf{X},
[\mathbf{Y},\mathbf{Z}]^\nabla_{\nabla^0}]^\nabla_{\nabla^0}.
\end{equation}
Formula (\ref{Bianchi}) follows by a straightforward calculation if the modified bracket that enters in the definition of $\mathcal{R}^\nabla$ is replaced by means of the vanishing torsion condition written in the form
$$ \nabla_{\mathbf{X}}\mathbf{Y}
-\nabla_{\mathbf{Y}}\mathbf{X}=[\mathbf{X},\mathbf{Y}]_{\nabla^0}^\nabla.
$$

In particular, if either $\mathbf{X},\mathbf{Y},\mathbf{Z}\in L$ or $\mathbf{X},\mathbf{Y},\mathbf{Z}\in \tilde{L}$, then,
$$\sum_{Cycl(\mathbf{X},\mathbf{Y},\mathbf{Z})} \mathcal{R}^\nabla(\mathbf{X},\mathbf{Y})\mathbf{Z}=0.$$
Indeed, we may compute the value of the left hand side of (\ref{Bianchi}) on arguments $\mathbf{X}_x,\mathbf{Y}_x,\mathbf{Z}_x$ at a point $x\in M$ by extending the arguments to local, strongly foliated vector fields in $L,\tilde{L}$, respectively, and by computing the value of the right hand side of (\ref{Bianchi}) at $x$. Then, (\ref{Jptcr}) holds and shows that the result is zero.
\section{Para-Dirac structures on double manifolds}
The topic of this section was not considered in double field theory and we study it only from the point of view of geometry. For any vector space or vector bundle $ \mathcal{S}$ endowed with a non degenerate, neutral metric $\gamma$, a maximal $\gamma$-isotropic subspace $ \mathcal{D}$ is called a Dirac subspace or subbundle, respectively. In particular, if $ \mathcal{S}=TM$ where $(M,\gamma)$ is a flat, para-K\"ahler manifold, then, $ \mathcal{D}$ will be called an {\it almost para-Dirac structure}\footnote{We add the particle ``para" to avoid confusion with the usual notion of a Dirac structure where $\mathcal{S}=TM\oplus T^*M$ \cite{C}.} on $M$, and if $\Gamma\mathcal{D}$ is closed under the metric $\nabla^0$-bracket, we will call it an {\it integrable} or a {\it para-Dirac structure}.

Some of the known algebraic facts concerning almost Dirac structures \cite{C} also apply to the almost para-Dirac case. We may consider the field of tangent subspaces $ \mathcal{E}=pr_L\mathcal{D}\subseteq TM$ and the $2$-form $\varpi$ induced on $\mathcal{E}$ by the fundamental form $\omega$ (see Section 1):
$$\varpi(X,Y)=\omega(\mathbf{X},\mathbf{Y}),\;\; X=pr_L\mathbf{X},Y=pr_L\mathbf{Y},\;\mathbf{X},\mathbf{Y}\in\mathcal{D},$$ which is independent of the choice of the extensions $\mathbf{X},\mathbf{Y}$ because $\mathcal{D}$ is $\gamma$-isotropic.
Then, we have the following reconstruction of $\mathcal{D}$:
\begin{equation}\label{reconstr} \mathcal{D}=\{ \mathbf{X}\in TM\,/\, pr_L\mathbf{X}\in \mathcal{E},\;\varpi(pr_L\mathbf{X},Y) =\gamma(pr_{\tilde{L}}\mathbf{X},Y),\,\forall Y\in \mathcal{E}\},\end{equation}
where everything is at the points $x\in M$. Indeed, if $\mathbf{X},\mathbf{X}'$ are of the form described by (\ref{reconstr}), then, $\gamma(\mathbf{X},\mathbf{X}')=0$ is a consequence of the skew symmetry of $\varpi$, and (\ref{reconstr}) implies $dim\,\mathcal{D}=m$.

On the other hand, following our earlier work \cite{V2}, we prove
\begin{prop}\label{Dsiisometr} Let $M$ be the double manifold of a field $(g,B)$. Then, there exists a bijective correspondence between the almost para-Dirac structures $ \mathcal{D}$ on $M$ that have a non degenerate restriction of the corresponding generalized metric $ \mathcal{H}$ and the tensor fields $J\in\Gamma(L\otimes L^*)$ that are isometries of the metric $g$. \end{prop}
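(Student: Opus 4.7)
The plan is to exploit the decomposition $TM = S_+ \oplus S_-$ coming from the generalized metric $\mathcal{H}$ and to recognize the almost para-Dirac structures with non-degenerate $\mathcal{H}|_\mathcal{D}$ as graphs of isomorphisms $S_+ \to S_-$. The first step is to show that the non-degeneracy of $\mathcal{H}|_\mathcal{D}$ is equivalent to $\mathcal{D} \cap S_+ = \mathcal{D} \cap S_- = 0$. Using the operator $\Phi$ characterized by $\mathcal{H}(Z,U) = \gamma(\Phi Z, U)$ from (\ref{Hgamma}) and the maximality condition $\mathcal{D}^{\perp_\gamma} = \mathcal{D}$, the non-degeneracy of $\mathcal{H}|_\mathcal{D}$ amounts to the requirement that no nonzero $Z \in \mathcal{D}$ has $\Phi Z \in \mathcal{D}$. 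Since $\Phi$ acts as $\pm\mathrm{Id}$ on $S_\pm$, the conditions $Z, \Phi Z \in \mathcal{D}$ are equivalent to the individual components $Z_\pm \in S_\pm$ lying in $\mathcal{D}$, which gives the claimed equivalence.

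Once this is established, a dimension count ($\dim\mathcal{D} = m = \dim S_\pm$, $TM = S_+ \oplus S_-$) forces $\mathcal{D}$ to be the graph of a bundle isomorphism $\phi: S_+ \to S_-$. Using the injections $\iota_\pm : L \to S_\pm$ of (\ref{injections}), $\phi$ is equivalent to an endomorphism $J = \iota_-^{-1} \circ \phi \circ \iota_+ \in \Gamma End(L) \cong \Gamma(L \otimes L^*)$. The $\gamma$-isotropy of $\mathcal{D}$ is then translated using $\gamma|_{S_\pm} = \pm\mathcal{H}|_{S_\pm} = \pm 2\iota_\pm^{-1*}g$ from (\ref{defHS}): for $Z = Z_+ + \phi Z_+ \in \mathcal{D}$ and $X = \iota_+^{-1}Z_+$, orthogonality of $S_+$ and $S_-$ with respect to $\gamma$ gives $\gamma(Z,Z) = 2g(X,X) - 2g(JX,JX)$, so the vanishing for all $X$ is equivalent, by polarization, to the $g$-isometry condition $g(JX,JY) = g(X,Y)$.

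For the converse, given a $g$-isometry $J \in \Gamma(L \otimes L^*)$, define $\phi = \iota_- \circ J \circ \iota_+^{-1}$ and let $\mathcal{D}$ be its graph. The same computation (with all $X,Y \in L$ allowed) reverses to show $\gamma(Z,W) = 0$ for all $Z,W \in \mathcal{D}$; maximality follows from $\dim\mathcal{D} = m$, and $\mathcal{D} \cap S_\pm = 0$ is immediate since $\phi$ is an isomorphism, which by the first step gives the non-degeneracy of $\mathcal{H}|_\mathcal{D}$. The main obstacle to be careful about is the sign and factor-of-two bookkeeping from $\mathcal{H}|_{S_\pm} = \pm \gamma|_{S_\pm} = 2\iota_\pm^{-1*}g$; otherwise the argument is essentially linear algebra on the $\pm1$-eigenbundle decomposition of $\Phi$ together with a polarization step.
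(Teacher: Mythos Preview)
Your argument is correct, and it takes a genuinely different route from the paper's own proof. The paper introduces the almost product structure $\Psi$ whose $\pm1$-eigenbundles are $\mathcal{D}$ and $\mathcal{D}^{\perp_{\mathcal{H}}}$, then deduces from the $\gamma$-isotropy of both eigenbundles that $\Phi\circ\Psi=-\Psi\circ\Phi$; this anticommutation forces $\Psi(S_\pm)=S_\mp$, and $J$ is read off from the action of $\Psi$ on $S_+$ via $\Psi(\iota_+X)=\iota_-(JX)$. The isometry property of $J$ then comes from the $\mathcal{H}$-compatibility $\mathcal{H}(\Psi\cdot,\Psi\cdot)=\mathcal{H}(\cdot,\cdot)$.

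Your approach bypasses $\Psi$ entirely: you use $\mathcal{H}=\gamma(\Phi\cdot,\cdot)$ and $\mathcal{D}^{\perp_\gamma}=\mathcal{D}$ to characterize non-degeneracy of $\mathcal{H}|_{\mathcal{D}}$ by $\mathcal{D}\cap S_\pm=0$, then realize $\mathcal{D}$ directly as the graph of an isomorphism $\phi:S_+\to S_-$ and pull back to $L$ via $\iota_\pm$. The isometry condition falls out of the $\gamma$-isotropy computation using (\ref{defHS}). This is more elementary linear algebra and slightly shorter. The two constructions agree, since the paper's formula (\ref{DdinJ}) is precisely $\mathcal{D}=\{\iota_+X+\iota_-(JX)\,:\,X\in L\}$, the graph of your $\phi=\iota_-\circ J\circ\iota_+^{-1}$. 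What the paper's detour through $\Psi$ buys is the set-up for the subsequent Remark~\ref{obsparaD}, where the matrix representation (\ref{matriceaPsi}) of $\Psi$ relates $J$ to a triple $(A,\sigma,\pi)$ in the style of generalized complex geometry; your direct graph argument does not produce that structure, but nothing in the proposition itself requires it.
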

\begin{proof} We start with the structure $\mathcal{D}$. Since $\mathcal{H}|_{\mathcal{D}}$ is non degenerate, $\mathcal{D}\cap\mathcal{D}^{\perp_{ \mathcal{H}}}=0$, hence $TM=\mathcal{D}\oplus\mathcal{D}^{\perp_\mathcal{H}}$. This decomposition is an almost product structure with a tensor field $\Psi$ such that $\mathcal{D},\mathcal{D}^{\perp_{ \mathcal{H}}}$ are the $\pm1$-eigenbundles of $\psi$. Checking for all the possible combinations of arguments in
$\mathcal{D},\mathcal{D}^{\perp_{ \mathcal{H}}}$, we get the compatibility condition
\begin{equation}\label{compatPsi} \mathcal{H}(\Psi\mathbf{X},\Psi\mathbf{Y})=\mathcal{H}( \mathbf{X},\mathbf{Y}).\end{equation}

Furthermore, with (\ref{Hgamma}) and since $\Phi$ is a $\gamma$-isometry, we get $\Phi( \mathcal{D})=\mathcal{D}^{\perp_{ \mathcal{H}}}$ and $\mathcal{D}^{\perp_{ \mathcal{H}}}$ is again maximal and $\gamma$-isotropic. The $\gamma$-isotropy of $\mathcal{D}$ and $\mathcal{D}^{\perp_{ \mathcal{H}}}$ yields the compatibility condition $$\gamma(\Psi\mathbf{X},\mathbf{Y})=- \gamma(\mathbf{X},\Psi\mathbf{Y}), $$
equivalently, $\Phi\circ\Psi=-\Psi\circ\Phi$.

The last relation implies $\Psi(S_\pm)=S_\mp$. Hence, there must exist a bundle isomorphism $J:L\rightarrow L$ such that, if we use $L\oplus L^*$ instead of $TM$, we have
\begin{equation}\label{eqPsi}\begin{array}{l} \Psi(X,\flat_{B+ g}X)=(JX,\flat_{B- g}JX),\vspace*{2mm}\\
\Psi(X,\flat_{B- g}X)=(J^{-1}X,\flat_{B+ g}J^{-1}X),\;\;X\in L.\end{array}\end{equation}
Moreover, the compatibility condition (\ref{compatPsi}) translates into
$$ g(JX,JY)=g(X,Y),\,g(J^{-1}X,J^{-1}Y)=g(X,Y),\;\;\forall X,Y\in L,
$$ which means that $J$ is a $g$-isometry.

Conversely, if we start with the $g$-isometry $J$ of $L$, it is easy to check that \begin{equation}\label{DdinJ} \mathcal{D}=\{(X,\flat_{B+ g}X) +(JX,\flat_{B- g}JX)\,/\,X\in L\}\end{equation} is a maximal, $\gamma$-isotropic subbundle, i.e., an almost para-Dirac structure on $M$. Its $ \mathcal{H}$-orthogonal bundle is
$$\mathcal{D}^{\perp_{ \mathcal{H}}}=\{(X,\flat_{B+ g}X) -(JX,\flat_{B- g}JX)\,/\,X\in L\}$$ and, since it has the intersection $0$ with $\mathcal{D}$, $\mathcal{H}|_{\mathcal{D}}$ is non degenerate. Moreover, it follows that the corresponding $g$-isometry is exactly the initial isometry $J$.\end{proof}

In analogy with the almost Dirac case, some almost para-Dirac structures may be interpreted as double objects of $2$-forms and bivector fields on the bundle $L$.
\begin{prop}\label{paraPoisson} An almost para-Dirac structure $ \mathcal{D}$ such that $ \mathcal{D}\cap\tilde{L}=0$ is equivalent with a $2$-form $\theta\in\wedge^2L^*$. An almost para-Dirac structure $ \mathcal{D}$ such that $ \mathcal{D}\cap L=0$ is equivalent with a bivector field $P\in\wedge^2L$.\end{prop}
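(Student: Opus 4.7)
The plan is to work throughout with the identification $TM \cong L \oplus L^*$ provided by the isomorphism $\tilde{\varphi}$ and the explicit form of the metric recorded in (\ref{gammabyL}), namely
$$\gamma((X,\alpha),(Y,\beta))=\beta(X)+\alpha(Y),$$
and then to realize each of the two classes of almost para-Dirac structures as graphs of a bundle map, with isotropy forcing the requisite skew symmetry.

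For the first assertion, I would translate the hypothesis $\mathcal{D}\cap\tilde{L}=0$ into $\mathcal{D}\cap(\{0\}\oplus L^*)=0$, so that the restriction of the projection $pr_L:L\oplus L^*\to L$ to $\mathcal{D}$ is injective. Since $\mathcal{D}$ is a maximal $\gamma$-isotropic subbundle of a neutral bundle of rank $2m$, we have $\mathrm{rk}\,\mathcal{D}=m=\mathrm{rk}\,L$, so $pr_L|_{\mathcal{D}}$ is a bundle isomorphism. Hence there is a unique bundle morphism $\theta^{\flat}:L\to L^{*}$ with
$$\mathcal{D}=\{(X,\theta^{\flat}X)\mid X\in L\}.$$
Plugging this into the isotropy condition yields
$(\theta^{\flat}Y)(X)+(\theta^{\flat}X)(Y)=0$ for all $X,Y\in L$, which is precisely the condition that the covariant $2$-tensor $\theta(X,Y):=(\theta^{\flat}X)(Y)$ on $L$ be skew, i.e.\ $\theta\in\Gamma\wedge^{2}L^{*}$. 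Conversely, for any such $\theta$, the graph $\{(X,\theta^{\flat}X)\mid X\in L\}$ is a rank-$m$ subbundle on which $\gamma$ vanishes by the same calculation, so it is a maximal isotropic, hence an almost para-Dirac structure, obviously with trivial intersection with $\tilde{L}$.

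For the second assertion the argument is dual: $\mathcal{D}\cap L=0$ means $\mathcal{D}\cap(L\oplus\{0\})=0$, and the dimension count forces the projection $pr_{L^{*}}:L\oplus L^{*}\to L^{*}$ to restrict to an isomorphism on $\mathcal{D}$. Therefore $\mathcal{D}$ is the graph of a unique morphism $P^{\sharp}:L^{*}\to L$,
$$\mathcal{D}=\{(P^{\sharp}\alpha,\alpha)\mid\alpha\in L^{*}\},$$
and the isotropy becomes $\beta(P^{\sharp}\alpha)+\alpha(P^{\sharp}\beta)=0$, i.e.\ the contravariant $2$-tensor $P(\alpha,\beta):=\beta(P^{\sharp}\alpha)$ is skew, so $P\in\Gamma\wedge^{2}L$. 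The converse is again clear by substitution.

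The only real subtlety, rather than an obstacle, is to keep the identification $\tilde{L}\simeq L^{*}$ (through $\varphi=\flat_{\gamma}|_{\tilde{L}}$) consistent with the notational convention fixed after (\ref{gammabyL}); once this is done, both statements reduce to the elementary fact that a maximal isotropic subspace of $(V\oplus V^{*},\mathrm{can})$ transverse to $\{0\}\oplus V^{*}$ (resp.\ to $V\oplus\{0\}$) is the graph of a skew-symmetric form on $V$ (resp.\ on $V^{*}$), applied fibrewise to $V=L$.
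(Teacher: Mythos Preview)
Your proof is correct and follows essentially the same approach as the paper: both arguments show that $pr_L|_{\mathcal{D}}$ is a bijection onto $L$ (the paper via the direct sum $TM=\mathcal{D}\oplus\tilde{L}$, you via injectivity plus the rank count), conclude that $\mathcal{D}$ is the graph of a morphism $L\to L^*$, and then use $\gamma$-isotropy to force skew symmetry; the second assertion is handled dually in both cases.
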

\begin{proof} $ \mathcal{D}\cap\tilde{L}=0$ implies $TM= \mathcal{D}\oplus\tilde{L}$, hence, $\forall\tilde{X}\in L$ we have a corresponding decomposition $$X=X'+X'',\;X'\in\mathcal{D},X''\in\tilde{L}.$$
If $X\in L$, this yields the decomposition of $X'$ when $TM=L\oplus \tilde{L}$ and we see that $pr_L\mathcal{D}=L$. Furthermore, $\mathcal{D}\cap\tilde{L}=0$ also shows that the vector of $\mathcal{D}$ that projects to a given $X\in L$ is unique. Together with the $\gamma$-isotropy of $ \mathcal{D}$, the previous remarks lead to the conclusion that $\mathcal{D}=graph\,\flat_\theta$ for a well defined form $\theta\in\wedge^2L^*$. This proves the first assertion of the proposition. If the roles of $L$ and $\tilde{L}$ are interchanged in the previous argument, we get $\mathcal{D}=graph\,\sharp_P$ where $P\in\wedge^2L$, which justifies the second assertion.
\end{proof}

In the first case of Proposition \ref{paraPoisson}, if we ask $graph\,\flat_\theta$ to be of the form (\ref{DdinJ}), the comparison yields
the corresponding $g$-isometry
$$ J_\theta=(Id+\sharp_g\flat_{B-\theta})\circ
(Id-\sharp_g\flat_{B-\theta})^{-1}.$$ In the case of $graph\,\sharp_P$ we get similarly
$$ J_P=(Q^+-Id)\circ(Q^-+Id)^{-1},\;
Q^\pm=\pm\sharp_g\flat_{B\pm g}\sharp_P\flat_g.$$ In both cases, the existence of the required inverses is ensured by the non degeneracy of $g$.
\begin{rem}\label{obsparaD} {\rm
The general almost para-Dirac structures $ \mathcal{D}$ can be related to objects on $L$ in the same way as for the almost Dirac structures. We indicate this briefly and refer to \cite{IV} for details. Using $L\oplus L^*$ instead of $TM$, the $ \mathcal{H}$-compatible, almost product structure $\Psi$ that corresponds to $ \mathcal{D}$ may be written as
\begin{equation}\label{matriceaPsi} \Psi\left(
\begin{array}{c}X\vspace{2mm}\\ \alpha \end{array}
\right) = \left(\begin{array}{cc} A&\sharp_\pi\vspace{2mm}\\
\flat_\sigma&-^t\hspace{-1pt}A\end{array}\right)
\left( \begin{array}{c}X\vspace{2mm}\\
\alpha \end{array}\right),\end{equation}
where $A\in End (L),\pi\in\Gamma\wedge^2L,\sigma\in\Gamma\wedge^2L^*$ and
$$ A^2=Id -
\sharp_\pi\circ\flat_\sigma,\;\pi(\alpha\circ A,\beta)=\pi(\alpha,
\beta\circ A),\;\sigma(AX,Y)=\sigma(X,AY).$$ Furthermore, the triple $(A,\sigma,\pi)$ must be such that the compatibility condition $\Phi\circ\Psi=-\Psi\circ\Phi$ holds. The relation between the triple $(A,\sigma,\pi)$ and the tensor $J$ of Proposition \ref{Dsiisometr} follows by comparing the representations (\ref{eqPsi}) and (\ref{matriceaPsi}) of $\Psi$. The result is \cite{V2}:
$$ \begin{array}{ll}
J=A+\sharp_\pi\circ\flat_{B+g},&
\flat_{B-g}\circ J=\flat_\sigma-
\hspace{1pt}^t\hspace{-1pt}A\circ\flat_{B+g},\vspace*{2mm}\\
J^{-1}=A+\sharp_\pi\circ\flat_{B-g},&
\flat_{B+g}\circ J^{-1}=\flat_\sigma-
\hspace{1pt}^t\hspace{-1pt}A\circ\flat_{B-g},\end{array}
$$
$$\begin{array}{c}
\sharp_\pi=\frac{1}{2}(J-J^{-1})\circ\sharp_g,\;
A=\frac{1}{2}(J+J^{-1})-\sharp_\pi\flat_B,\vspace*{2mm}\\
\flat_\sigma=\flat_B\circ(J+J^{-1})-\hspace{1pt}^t\hspace{-1pt}A\circ\flat_B.
\end{array}$$}\end{rem}

Now, we look at the para-integrability condition .
\begin{prop}\label{paraintc} The almost para-Dirac structure $\mathcal{D}$ is integrable iff, $\forall\mathbf{X},\mathbf{Y},\mathbf{Z}\in\Gamma \mathcal{D}$, one of the following equivalent conditions holds:\\
$1)$\hspace{5mm} $\gamma([ \mathbf{X},\mathbf{Y}]_{\nabla^0},\mathbf{Z})=0$,\\
$2)$\hspace{5mm} $\gamma(\mathbf{X},\nabla^0_{\mathbf{Z}}\mathbf{Y})=
\gamma([\mathbf{X},\mathbf{Y}],\mathbf{Z})$,\\
$3)$\hspace{5mm} $\sum_{Cycl(\mathbf{X},\mathbf{Y},\mathbf{Z})}
\gamma(\mathbf{X}, \nabla^0_{\mathbf{Z}}\mathbf{Y})=0$,\\
$4)$\hspace{5mm} $\sum_{Cycl(\mathbf{X},\mathbf{Y},\mathbf{Z})}
\gamma([\mathbf{X},\mathbf{Y}], \mathbf{Z})=0$,\\
$5)$\hspace{5mm} $\sum_{Cycl(\mathbf{X},\mathbf{Y},\mathbf{Z})}
\gamma(\mathbf{X}\wedge_{\nabla^0}\mathbf{Y}, \mathbf{Z})=0$.
\end{prop}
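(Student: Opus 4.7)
The plan is to first identify condition (1) with integrability, then derive all five equivalences from a single structural fact: the trilinear form
\[
T(\mathbf{X},\mathbf{Y},\mathbf{Z}):=\gamma([\mathbf{X},\mathbf{Y}]_{\nabla^0},\mathbf{Z})
\]
restricted to $\Gamma\mathcal{D}^{\times 3}$ is $C^\infty(M)$-multilinear and totally skew-symmetric. Since $\mathcal{D}$ is maximally $\gamma$-isotropic, $\mathcal{D}=\mathcal{D}^{\perp_\gamma}$, so integrability $[\Gamma\mathcal{D},\Gamma\mathcal{D}]_{\nabla^0}\subseteq\Gamma\mathcal{D}$ is equivalent to (1).

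The key auxiliary identity, valid only on isotropic arguments, is
\[
\gamma(\mathbf{X}\wedge_{\nabla^0}\mathbf{Y},\mathbf{Z})=\gamma(\mathbf{X},\nabla^0_{\mathbf{Z}}\mathbf{Y})\qquad(\mathbf{X},\mathbf{Y},\mathbf{Z}\in\Gamma\mathcal{D}).
\]
It follows from the defining formula for $\wedge_{\nabla^0}$ in (\ref{Ccroset}) together with the relation $\gamma(\mathbf{Y},\nabla^0_{\mathbf{Z}}\mathbf{X})=-\gamma(\mathbf{X},\nabla^0_{\mathbf{Z}}\mathbf{Y})$, which is obtained by differentiating the vanishing function $\gamma(\mathbf{X},\mathbf{Y})$ using $\gamma$-compatibility of $\nabla^0$. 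Substituting (\ref{Ccroset}) into condition (1) and using this identity immediately yields (1)$\Leftrightarrow$(2). For the structural claim, $C^\infty(M)$-multilinearity of $T$ follows from (\ref{lincroset}), whose inhomogeneous term $\gamma(e_1,e_2)\partial f$ vanishes on $\mathcal{D}$; antisymmetry in the first two slots is built into the bracket; antisymmetry in the last two slots follows by applying the compatibility axiom (\ref{ax1'}) on $\mathcal{D}$ and noting that the left-hand side $\mathbf{X}(\gamma(\mathbf{Y},\mathbf{Z}))$ and the correction terms $\partial(\gamma(\mathbf{X},\mathbf{Y}))$, $\partial(\gamma(\mathbf{X},\mathbf{Z}))$ all vanish by isotropy.

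Being totally skew, $T$ satisfies $\sum_{Cycl}T=3T$, so condition (1) is equivalent to the vanishing of its cyclic sum. Substituting (\ref{Ccroset}) and invoking the general identity (\ref{equax}) yields
\[
\sum_{Cycl(\mathbf{X},\mathbf{Y},\mathbf{Z})}\gamma([\mathbf{X},\mathbf{Y}]_{\nabla^0},\mathbf{Z})=\frac{3}{2}\sum_{Cycl(\mathbf{X},\mathbf{Y},\mathbf{Z})}\gamma([\mathbf{X},\mathbf{Y}],\mathbf{Z}),
\]
so (1)$\Leftrightarrow$(4). Applying the isotropy identity termwise to $\sum_{Cycl}\gamma(\mathbf{X}\wedge_{\nabla^0}\mathbf{Y},\mathbf{Z})$ gives (3)$\Leftrightarrow$(5), and (\ref{equax}) then closes (5)$\Leftrightarrow$(4), completing the cycle.

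The only genuine obstacle is the bookkeeping behind the total skew symmetry of $T$ on $\mathcal{D}$: every cancellation needed—in (\ref{lincroset}), in (\ref{ax1'}), and in the defining formula for $\wedge_{\nabla^0}$—is produced by a separate use of the $\gamma$-isotropy of $\mathcal{D}$. Once $T$ is recognized as a totally skew $3$-form on $\mathcal{D}$, each of the four equivalences reduces to a short algebraic rewriting using (\ref{Ccroset}), (\ref{equax}), and the isotropy identity.
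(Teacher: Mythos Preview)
Your argument is correct. The route differs from the paper's in one notable way: you organize everything around the single structural observation that the ``Courant tensor'' $T(\mathbf{X},\mathbf{Y},\mathbf{Z})=\gamma([\mathbf{X},\mathbf{Y}]_{\nabla^0},\mathbf{Z})$ restricts to a totally skew $C^\infty$-trilinear form on $\Gamma\mathcal{D}$ (proved via (\ref{ax1'}) and (\ref{lincroset}) with the isotropy cancellations), and then use $\sum_{Cycl}T=3T$ as a hub together with (\ref{Ccroset}) and (\ref{equax}). The paper instead proceeds linearly $1)\Leftrightarrow2)\Leftrightarrow3)\Leftrightarrow4)\Leftrightarrow5)$: the step $2)\Leftrightarrow3)$ uses that $\nabla^0$ is torsion-free (so $[\mathbf{X},\mathbf{Y}]=\nabla^0_{\mathbf{X}}\mathbf{Y}-\nabla^0_{\mathbf{Y}}\mathbf{X}$) together with the isotropy identity, and the step $3)\Leftrightarrow4)$ invokes the Koszul formula for the Levi-Civita connection on the isotropic arguments. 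Your approach is the standard Courant-algebroid viewpoint and is more portable (it would work verbatim in any metric algebroid), whereas the paper exploits the specific Riemannian features of $\nabla^0$; both reach the same conclusions with comparable effort.
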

\begin{proof}
Condition 1) is equivalent to para-integrability because $\mathcal{D}$ is $\gamma$-maximally isotropic.
Using formula (\ref{Ccroset}) and the consequence
$\gamma(\mathbf{X}, \nabla^0_{\mathbf{Z}}\mathbf{Y})=-
\gamma(\nabla^0_{\mathbf{Z}}\mathbf{X}, \mathbf{Y})$ of $\gamma(\mathbf{X},\mathbf{Y})=0$, condition 1) transforms into 2).
Then, since $\nabla^0$ has no torsion condition 2) is equivalent to 3).
If we use the well known global expression of the Levi-Civita connection (e.g., \cite{KN}) in 3), we get condition 4), and the latter, together with formula (\ref{equax}) yields condition 5).
\end{proof}

The para-integrability condition 2)  has the following obvious consequence.
\begin{prop}\label{prop123} If $ \mathcal{D}$ is an almost para-Dirac structure, any two of the following properties implies the third property: $1)$
$ \mathcal{D}$ is para-integrable, $2)$ $ \mathcal{D}$ is a foliation on $M$, $3)$ $ \mathcal{D}$ is totally geodesic (i.e., $\nabla^0_{\mathbf{X}}\mathbf{Y}\in\Gamma\mathcal{D}$, $\forall
\mathbf{X},\mathbf{Y}\in\Gamma \mathcal{D}$).\end{prop}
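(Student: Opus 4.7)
The plan is to exploit condition 2) of Proposition \ref{paraintc}, which states that $\mathcal{D}$ is para-integrable if and only if
\[
\gamma(\mathbf{X},\nabla^0_{\mathbf{Z}}\mathbf{Y}) \;=\; \gamma([\mathbf{X},\mathbf{Y}],\mathbf{Z})
\]
for all $\mathbf{X},\mathbf{Y},\mathbf{Z}\in\Gamma\mathcal{D}$. Combined with the fact that $\mathcal{D}$ is maximally $\gamma$-isotropic (so that $\mathcal{D}^{\perp_\gamma}=\mathcal{D}$), the three properties translate to three statements about when sections lie in $\mathcal{D}$: property 1) is equivalent to the above identity, property 2) is equivalent to $\gamma([\mathbf{X},\mathbf{Y}],\mathbf{W})=0$ for all $\mathbf{X},\mathbf{Y},\mathbf{W}\in\Gamma\mathcal{D}$, and property 3) is equivalent to $\gamma(\mathbf{W},\nabla^0_{\mathbf{Z}}\mathbf{Y})=0$ for all $\mathbf{Y},\mathbf{Z},\mathbf{W}\in\Gamma\mathcal{D}$. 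Thus everything reduces to the single identity of Proposition \ref{paraintc}~2).

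I would then handle the three cases in turn. Assuming 1) and 2): by 2) the right side vanishes for $\mathbf{X},\mathbf{Y},\mathbf{Z}\in\Gamma\mathcal{D}$, so $\gamma(\mathbf{X},\nabla^0_{\mathbf{Z}}\mathbf{Y})=0$ for every $\mathbf{X}\in\Gamma\mathcal{D}$; maximal isotropy then forces $\nabla^0_{\mathbf{Z}}\mathbf{Y}\in\Gamma\mathcal{D}$, i.e.\ 3). Assuming 1) and 3): by 3) the left side vanishes, so $\gamma([\mathbf{X},\mathbf{Y}],\mathbf{Z})=0$ for every $\mathbf{Z}\in\Gamma\mathcal{D}$, whence $[\mathbf{X},\mathbf{Y}]\in\Gamma\mathcal{D}$, i.e.\ 2). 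Assuming 2) and 3): both sides of the identity vanish automatically on sections of $\mathcal{D}$, so the identity of Proposition \ref{paraintc}~2) holds, giving 1).

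The only nontrivial point is to note in the $2)\wedge 3)\Rightarrow 1)$ case that the identity is required only for sections of $\mathcal{D}$, which is precisely the hypothesis of Proposition \ref{paraintc}~2); this is not an obstacle but is worth stating explicitly. There are no computational difficulties, since the proof is essentially an exercise in matching the two sides of a single bilinear identity to the orthogonal complement of $\mathcal{D}$ in $TM$.
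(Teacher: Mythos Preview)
Your proof is correct and is exactly the argument the paper has in mind: the paper simply states that the result is an ``obvious consequence'' of condition~2) of Proposition~\ref{paraintc}, and you have spelled out the three implications using that identity together with the maximal $\gamma$-isotropy $\mathcal{D}^{\perp_\gamma}=\mathcal{D}$.
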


An interesting situation is that of a {\it strongly foliated} para-Dirac structure $ \mathcal{D}$, which is defined by the condition that the subbundle
$ \mathcal{D}\subseteq TM$ has local bases consisting of strongly foliated, local vector fields. In this case, Corollary \ref{algCfol} implies that $ \mathcal{D}$ is an $\tilde{L}$-transversal Lie algebroid \cite{VMedJ}. On the other hand, we get a nice form of the para-integrability condition, similar to that of Dirac structures:
\begin{prop} A strongly foliated almost para-Dirac structure is integrable iff \begin{equation}\label{paraint5} d_{\nabla^0}\omega(\mathbf{X},\mathbf{Y},\mathbf{Z})=0,\;\; \forall\mathbf{X},\mathbf{Y},\mathbf{Z}\in\Gamma \mathcal{D},\end{equation}
where $d_{\nabla^0}$ is defined by replacing the Lie brackets by $\nabla^0$-brackets in the formula of the exterior differential $d$.
\end{prop}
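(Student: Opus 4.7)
The plan is to reduce $d_{\nabla^0}\omega$ on sections of $\mathcal{D}$ to the tensorial quantity appearing in condition 5) of Proposition \ref{paraintc}, and then invoke that proposition. First, since $\omega=dx^i\wedge d\tilde{x}_i$ is closed, the ordinary exterior derivative satisfies $d\omega=0$. Subtracting the Cartan formula for $d\omega$ from the formal definition of $d_{\nabla^0}\omega$ leaves only the bracket terms, and using (\ref{Ccroset}) to write $[\,,\,]-[\,,\,]_{\nabla^0}=\wedge_{\nabla^0}$, I would obtain
\begin{equation*}
d_{\nabla^0}\omega(\mathbf{X},\mathbf{Y},\mathbf{Z})
=\sum_{Cycl(\mathbf{X},\mathbf{Y},\mathbf{Z})}\omega(\mathbf{X}\wedge_{\nabla^0}\mathbf{Y},\mathbf{Z}).
\end{equation*}

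Next, I would observe that the right-hand side is $C^\infty(M)$-trilinear in arguments taken from $\Gamma\mathcal{D}$: by property (\ref{propcdot}), the only obstruction to the tensoriality of $\wedge_{\nabla^0}$ is the term $\gamma(\mathbf{X},\mathbf{Y})\partial f$, which vanishes thanks to the $\gamma$-isotropy of $\mathcal{D}$. Hence the identity above may be verified on any local system of generators of $\Gamma\mathcal{D}$, and the strong foliation hypothesis provides such a system consisting of strongly foliated sections. For these $\mathbf{X},\mathbf{Y}\in\chi_{\rm sf}(M)$ taking values in $\mathcal{D}$, Lemma \ref{lemaLz} gives $\mathbf{X}\wedge_{\nabla^0}\mathbf{Y}\in\Gamma\tilde{L}$; since $F|_{\tilde L}=-\mathrm{Id}$ and $\omega(\mathbf{U},\mathbf{V})=\gamma(F\mathbf{U},\mathbf{V})$, I conclude that $\omega(\mathbf{X}\wedge_{\nabla^0}\mathbf{Y},\mathbf{Z})=-\gamma(\mathbf{X}\wedge_{\nabla^0}\mathbf{Y},\mathbf{Z})$ for any $\mathbf{Z}\in\Gamma\mathcal{D}$.

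Putting the pieces together yields
\begin{equation*}
d_{\nabla^0}\omega(\mathbf{X},\mathbf{Y},\mathbf{Z})=-\sum_{Cycl(\mathbf{X},\mathbf{Y},\mathbf{Z})}\gamma(\mathbf{X}\wedge_{\nabla^0}\mathbf{Y},\mathbf{Z}),
\end{equation*}
and condition 5) of Proposition \ref{paraintc} tells us that the right-hand side vanishes identically on $\Gamma\mathcal{D}$ precisely when $\mathcal{D}$ is para-integrable. The only step that could be subtle is the tensoriality verification justifying the reduction to strongly foliated sections, but this is exactly where the $\gamma$-isotropy of $\mathcal{D}$ intervenes; the rest is algebraic, with Lemma \ref{lemaLz} and the closedness of $\omega$ being the two substantive inputs.
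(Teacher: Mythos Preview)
Your proof is correct and follows essentially the same route as the paper's: both use $d\omega=0$ together with (\ref{Ccroset}) to reduce $d_{\nabla^0}\omega$ to the cyclic sum $\sum_{Cycl}\omega(\mathbf{X}\wedge_{\nabla^0}\mathbf{Y},\mathbf{Z})$, invoke Lemma~\ref{lemaLz} on strongly foliated sections to replace $\omega$ by $-\gamma$, and then appeal to condition~5) of Proposition~\ref{paraintc}. Your explicit tensoriality check via (\ref{propcdot}) and the $\gamma$-isotropy of $\mathcal{D}$ makes precise the paper's ``it suffices to check at points with strongly foliated extensions'' step, but otherwise the arguments coincide.
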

\begin{proof} Since it suffices to check (\ref{paraint5}) at the points $x\in M$ and since the hypothesis that $\mathcal{D}$ is strongly foliated implies that each vector at $x$ has extensions to strongly foliated, local vector fields in $\mathcal{D}$, it suffices to prove the result for strongly foliated arguments. Then, $\mathbf{X}\wedge_{\nabla^0}\mathbf{Y}\in\Gamma\tilde{L}$ and the para-integrability condition 5) is equivalent to
$$\sum_{Cycl(\mathbf{X},\mathbf{Y},\mathbf{Z})}
\omega(\mathbf{X}\wedge_{\nabla^0}\mathbf{Y}, \mathbf{Z})=0,\;\; \forall\mathbf{X},\mathbf{Y},\mathbf{Z}\in\Gamma \mathcal{D}.$$
Here, if we replace $$\mathbf{X}\wedge_{\nabla^0}\mathbf{Y}\stackrel{(\ref{Ccroset})}{=}
[\mathbf{X},\mathbf{Y}]- [\mathbf{X},\mathbf{Y}]_{\nabla^0}$$ and use the consequence
$$\sum_{Cycl(\mathbf{X},\mathbf{Y},\mathbf{Z})}
\omega([\mathbf{X},\mathbf{Y}], \mathbf{Z})= \sum_{Cycl(\mathbf{X},\mathbf{Y},\mathbf{Z})} \mathbf{X}(\omega(\mathbf{Y},\mathbf{Z}))$$ of the property $d\omega=0$, we get (\ref{paraint5}).\end{proof}

The particular case of strongly foliated para-Dirac structures of the form $\mathcal{D}=graph\,\flat_\theta$, $\mathcal{D}=graph\,\sharp_P$ $(\theta\in\Gamma\wedge^2L^*,P\in\Gamma\wedge^2L)$ occurs iff $\theta,P$ are strongly foliated tensor fields. Then, the structure is the lift of local, usual Dirac structures of the corresponding type (i.e., presymplectic and Poisson, respectively) on local, transversal submanifolds of the foliation $\tilde{L}$ and it is well known that the integrability conditions of the latter are $d_L\theta=0$ and the annulation of the Schouten-Nijenhuis bracket $[P,P]_L=0$, respectively.
\section{Appendix: Densities on vector bundles}
The local transition functions of a vector bundle structure yield a bijective correspondence between the isomorphism classes of vector bundles $V$ of rank $k$ and the cohomology classes in $H^1(M,\underline{Gl(k,\mathds{R})})$ (e.g., \cite{V6}); underlining denotes the sheaf of germs of $C^\infty$-functions with values in the group). Then, $\forall s\in\mathds{R}$, there exists a well defined homomorphism $H^1(M,\underline{Gl(k,\mathds{R})})
\rightarrow H^1(M,\underline{\mathds{R}_+})$ ($\mathds{R}_+$ is the multiplicative group of positive real numbers) defined by $$(\psi^i_j)\mapsto |det(\psi^i_j)|^s,$$ where $(\psi^i_j)$ is the matrix that transforms old coordinates into new coordinates, $\xi'^i=\psi^i_j\xi^j$. A line bundle that corresponds to the image cohomology class is called a bundle of densities of weight $s$ of $V$; this bundle is only determined up to an isomorphism and it is trivial since the sheaf $\underline{\mathds{R}}_+$ is fine, therefore, it has trivial cohomology in positive dimensions.

If the bundle $V$ is orientable, the canonical bundle $\wedge^kV$ is a bundle of densities of rank $1$. For this reason, in the general case, if $(e_i)$ is a local basis of cross sections of $V$, we will denote by $|e_1\wedge...\wedge e_k|^s$ a corresponding local basis of the bundle of densities of weight $s$. \begin{example}\label{exdens} {\rm
It $g$ is a metric on the vector bundle $V$, the function $\sqrt{|det(g)|}$ is the component of a density of weight $-1$ of the bundle $V$, equivalently, of a density of weight $1$ of the bundle $V^*$.}\end{example}

If $V$ is an arbitrary vector bundle of rank $k$ endowed with a connection $\nabla$ that has the local equations
$$\nabla e_i=\omega_i^je_j$$ where $\omega_i^j$ are the local connection forms, then, $\wedge^kV$ has an induced connection with the local equation $$\nabla(e_1\wedge...\wedge e_k)=\varpi(e_1\wedge...\wedge e_k),\hspace{2mm} \varpi=trace\,\omega,\,\omega=(\omega_i^j).$$

Under a transition $ \tilde{e}_j=\lambda_j^ie_i$, where $\lambda_j^i\psi_i^k=\delta_j^k$, the connection form $\varpi$ changes by
 $$ \tilde{\varpi}=\varpi+d\,ln|det(\lambda_j^i)|.$$This equality may also be written as
$$ \tilde{\varpi}=\varpi+\frac{1}{s}d\,ln|det(\lambda_j^i)^s|.$$ Therefore, the equation $$\nabla|e_1\wedge...\wedge e_k|^s=s\varpi|e_1\wedge...\wedge e_k|^s$$ defines an induced connection on the bundle of densities of weight $s$.

If $V=TM$ where $M$ is a flat, para-K\"ahler manifold, we may consider
a strongly foliated density of weight $s$ of the tangent bundle $TM$, $$\theta=\vartheta(x^i)\left|\frac{\partial}{\partial x^1}\wedge...\wedge
\frac{\partial}{\partial x^m}\wedge\frac{\partial}{\partial \tilde{x}_1}
\wedge...\wedge\frac{\partial}{\partial \tilde{x}_m}\right|^s,$$ where $(x^i,\tilde{x}_j)$ are distinguished coordinates. Then, we define the generalized Lie derivative
\begin{equation}\label{deriLdens} \mathfrak{L}_{ \mathbf{X}}\theta
=(\mathfrak{L}_{ \mathbf{X}}\vartheta+s\vartheta div_L\mathbf{X})
\left|\frac{\partial}{\partial x^1}\wedge...\wedge
\frac{\partial}{\partial x^m}\wedge\frac{\partial}{\partial \tilde{x}_1}
\wedge...\wedge\frac{\partial}{\partial \tilde{x}_m}\right|^s,
\end{equation} where $ \mathbf{X}$ is the vector field (\ref{folvf}) and the divergence is defined by
$$div_L\mathbf{X}=\sum_{i=1}^m\frac{\partial\xi^i}{\partial x^i}$$ and it is invariant under the coordinate transformations (\ref{locafin}).

\hspace*{7.5cm}{\small \begin{tabular}{l} Department of
Mathematics\\ University of Haifa, Israel\\ E-mail: vaisman@math.haifa.ac.il
\end{tabular}}
\end{document}